\newcommand\R{{\mathbf{R}}}
\newcommand\B{{\mathbf{B}}}
\newcommand\Z{{\mathbf{Z}}}
\renewcommand\H{{\mathcal{H}}}
\renewcommand\S{{\mathcal{S}}}
\newcommand\eps{\varepsilon}
\newcommand\Fav{\operatorname{Fav}}
\newcommand\Lip{{\operatorname{Lip}}}
\newcommand\N{{\mathcal{N}}}
\newcommand\Norm{\operatorname{Norm}}
\newcommand\Nb{\operatorname{Nb}}
\newcommand\diam{\operatorname{diam}}
\newcommand\dist{\operatorname{dist}}
\theoremstyle{plain}
  \newtheorem{theorem}[subsection]{Theorem}
  \newtheorem{proposition}[subsection]{Proposition}
  \newtheorem{lemma}[subsection]{Lemma}
  \newtheorem{corollary}[subsection]{Corollary}
\theoremstyle{remark}
  \newtheorem{remark}[subsection]{Remark}
  \newtheorem{example}[subsection]{Example}
\theoremstyle{definition}
  \newtheorem{definition}[subsection]{Definition}
\begin{document}

\title[Quantitative Besicovitch projection theorem]{A quantitative version of the Besicovitch projection theorem via multiscale analysis}

\author{Terence Tao}
\address{UCLA Department of Mathematics, Los Angeles, CA 90095-1596.}
\email{tao@@math.ucla.edu}

\begin{abstract}  By using a multiscale analysis, we establish quantitative versions of the Besicovitch projection theorem (almost every projection of a purely unrectifiable set in the plane of finite length has measure zero) and a standard companion result, namely that any planar set with at least two projections of measure zero is purely unrectifiable.  We illustrate these results by providing an explicit (but weak) upper bound on the average projection of the $n^{th}$ generation of a product Cantor set.
\end{abstract}

\maketitle

\section{Introduction}

\subsection{The Besicovitch projection theorem}

The purpose of this note is to establish a quantitative version of the famous projection theorem of Besicovitch.  To state this theorem we first set out some notation.

\begin{definition}[Spherical measure]  Let $E \subset \R^2$, and let $0 \leq r_- < r_+$.  The \emph{one-dimensional restricted spherical content} $\S^1_{r_-,r_+}(E)$ of $E$ is defined to be the quantity
$$ \S^1_{r_-,r_+}(E) := \inf \sum_{B \in \B} \hbox{diam}(B)$$
where the infimum ranges over all at most countable collections $\B$ of open balls $B$ of radius $r(B) \in [r_-,r_+]$ which cover $E$.  The \emph{one-dimensional spherical measure} $\S^1(E)$ is then defined as
$$ \S^1(E) := \lim_{r_+ \to 0} \S^1_{0,r_+}(E).$$
A set $E \subset \R^2$ is \emph{$\S^1$-measurable} if we have $\S^1(F) = \S^1(F \cap E) + \S^1(F \backslash E)$ for all $F \subset \R^2$.
\end{definition}

If $E$ is compact, then it is easy to see that
$$ \S^1_{0,r_+}(E) = \lim_{r_- \to 0} \S^1_{r_-,r_+}(E)$$
and hence
\begin{equation}\label{hie}
 \S^1(E) = \lim_{r_+ \to 0} \lim_{r_- \to 0} \S^1_{r_-,r_+}(E).
\end{equation}
It is also well known (see e.g. \cite[Ch. 4,5]{mattila}) that spherical measure $\S^1$ is comparable up to constants with Hausdorff measure $\H^1$, and if $E$ is $\S^1$-measurable with $\S^1(E) < \infty$, then the restriction $d\S^1|_E$ of spherical measure to $E$ is a Radon measure.  We refer the reader to \cite{mattila} for further properties of Hausdorff and spherical measure.

\begin{definition}[Favard length]
We endow the unit circle $S^1 \subset \R^2$ with normalised arclength measure $d\sigma = \frac{d\H^1|_{S^1}}{2\pi}$, and the cylinder $\R \times S^1$ with the product $dm \times d\sigma$ of Lebesgue measure and normalised arclength measure.  Given any $(c,\omega) \in \R \times S^1$, we form the dual line 
$$ l_{c,\omega} := \{ x \in \R^2: x \cdot \omega = c \},$$
thus $\R \times S^1$ can be viewed as a double cover of the affine Grassmanian $A(2,1)$.
Given any Lebesgue measurable set $A \subset \R^2 \times S^1$ (which one can view as the cosphere bundle of $\R^2$), we define the \emph{Favard length} $\Fav(A)$ of $A$ to be the quantity
$$ \Fav(A) := m \times \sigma( \{ (c,\omega) \in \R \times S^1: (l_{c,\omega} \times \{\omega\}) \cap A \neq \emptyset \} ).$$
If $E \subset \R^2$, we adopt the convention $\Fav(E) := \Fav(E \times S^1)$.
\end{definition}

\begin{remark} The Favard length is usually defined (see e.g. \cite[p. 357]{falconer}) for subsets $E$ of $\R^2$ rather than subsets $A$ of $\R^2 \times S^1$, but it will be convenient to generalise the definition of Favard length in our arguments because most of our analysis shall take place on $\R^2 \times S^1$.  For future reference, we make the simple observations that Favard length is monotone and subadditive, thus
\begin{equation}\label{fav-sub}
\max(\Fav(A), \Fav(B)) \leq \Fav(A \cup B) \leq \Fav(A) + \Fav(B)
\end{equation}
for all $A, B \subset \R^2 \times S^1$.  Also if $A_1 \supset A_2 \supset \ldots$ is a nested sequence of compact subsets of $\R^2 \times S^1$, then we have
\begin{equation}\label{fav-mono}
\Fav( \bigcap_{n=1}^\infty A_n ) = \bigcap_{n=1}^\infty \Fav(A_n).
\end{equation}
\end{remark}

\begin{definition}[Rectifiability]  If $F: \R \to \R$ is a function, we define the \emph{Lipschitz constant}
$$ \|F\|_{\Lip(\R)} := \sup_{x \neq y} \frac{|F(x)-F(y)|}{|x-y|}.$$
We define a \emph{Lipschitz graph} in $\R^2$ to be any set $\Gamma$ of the form
$$ \Gamma = \{ x \omega_1 + F(x) \omega_2: x \in \R \}$$
where $\omega_1,\omega_2 \in S^1$ are orthonormal and $F: \R \to \R$ has finite Lipschitz constant.  We say that a set $E$ is \emph{purely unrectifiable} if $\S^1(E \cap \Gamma) = 0$ for all Lipschitz graphs $\Gamma$, or equivalently if
$$ m(\{ x \in \R: x \omega_1 + F(x) \omega_2 \in E \}) = 0$$
for all orthonormal $\omega_1, \omega_2 \in S^1$ and all Lipschitz $F: \R \to \R$, where $m$ denotes Lebesgue measure on $\R$.
\end{definition}

We can now state the Besicovitch projection theorem:

\begin{theorem}[Besicovitch projection theorem]\label{besithm}\cite[Theorem 6.13]{besi}  Let $E \subset \R^2$ be an $\S^1$-measurable set such that $\S^1(E) < \infty$ and that $E$ is purely unrectifiable.  Then $\Fav(E) = 0$.
\end{theorem}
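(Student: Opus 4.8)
The plan is to run a multiscale analysis: cover $E$ by a $\delta$-net of balls, follow how the Favard length of that net decreases as $\delta\to 0$, harvest a quantitative ``Favard deficit'' at every scale at which $E$ is genuinely non-flat, and use pure unrectifiability to force such scales to occur with positive density on the bulk of $E$. To begin I would reduce to a net estimate on a compact set. A subset of a purely unrectifiable set is purely unrectifiable, and since $d\S^1|_E$ is a Radon measure we may exhaust $E$ by compact subsets up to an $\S^1$-null remainder; an $\S^1$-null set has all of its ($1$-Lipschitz) projections of Lebesgue measure zero, hence Favard length zero, and Favard length is countably subadditive, so it is enough to treat $E$ compact. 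For such $E$, fix a large integer $\lambda$; using \eqref{hie} and the comparability of $\S^1$ with $\H^1$, for each scale $\delta=\lambda^{-j}$ pick a finite family $\N_\delta$ of balls of radius $\le\delta$, of bounded overlap, with $\sum_{B\in\N_\delta}\diam(B)\le 1+\S^1(E)$, each meeting $E$, arranged so that $\N_{\delta/\lambda}$ refines $\N_\delta$ (cover $E\cap B$ efficiently inside each $B\in\N_\delta$). Since $\Fav(B)=\diam(B)$ for a ball, since $\bigcup\N_{\delta/\lambda}\subset\bigcup\N_\delta$, and since $\bigcap_j\bigcup\N_{\lambda^{-j}}=E$ as the radii tend to $0$, \eqref{fav-mono} gives $\Fav(E)=\lim_{j\to\infty}\Fav(\bigcup\N_{\lambda^{-j}})$, and the theorem reduces to proving $\Fav(\bigcup\N_\delta)\to 0$.

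The core of the argument is a per-scale deficit estimate. Fix a parent ball $B\in\N_\delta$ and let $B_1,\dots,B_k\in\N_{\delta/\lambda}$ be its children; finiteness of $\S^1(E)$ keeps $k$ small on average (of order $\lambda$, not $\lambda^2$), so $\bigcup_i B_i$ occupies only an $O(1/\lambda)$ fraction of $B$ by area and behaves like a curve rather than a region. I would establish a geometric lemma: there is $\gamma=\gamma(\eta,\lambda)>0$ such that if $E\cap B$ is $\eta$-non-flat at scale $\delta$ (not contained in the $\eta\delta$-neighbourhood of any line), then
$$ \int_{S^1} m\left(\pi_\omega\left(\bigcup_i B_i\right)\right)\,d\sigma(\omega) \;\le\; (1-\gamma)\,\diam(B), $$
the deficit arising because the few thin children span a transverse chord of length $\gtrsim\eta\delta$, so that for a positive-measure set of directions $\omega$ two of the shadows $\pi_\omega(B_i)$ overlap, and an inclusion--exclusion estimate turns this overlap into the stated loss; when $E\cap B$ is $\eta$-flat no deficit is claimed, but none is lost either, since $\bigcup_i B_i\subset B$. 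Summing over $B\in\N_\delta$ and using subadditivity \eqref{fav-sub} yields $\Fav(\bigcup\N_{\delta/\lambda})\le\Fav(\bigcup\N_\delta)-\gamma\,\mu_\eta(\delta)$, where $\mu_\eta(\delta)$ is the $\S^1$-mass of $E$ lying in $\eta$-non-flat parent balls at scale $\delta$.

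It then remains to show the deficits accumulate. Fix $\eps>0$ and fix $\eta$ small. If some portion of $E$ of $\S^1$-measure $\ge\eps$ were $\eta$-flat at all but finitely many scales, that portion would satisfy a cone condition, hence be locally a Lipschitz graph, contradicting pure unrectifiability; hence, after discarding a subset of $E$ of $\S^1$-measure $\le\eps$, every surviving point lies in an $\eta$-non-flat parent ball at a positive lower density of scales $\delta=\lambda^{-j}$. Feeding this back into the deficit estimate, the part of $\Fav(\bigcup\N_\delta)$ carried by the surviving set gets multiplied by $(1-\gamma)$ at a positive density of scales and therefore tends to $0$, while the discarded part contributes at most $\sum\diam\lesssim\eps$. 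Hence $\limsup_{\delta\to 0}\Fav(\bigcup\N_\delta)\lesssim\eps$, and letting $\eps\to 0$ gives $\Fav(E)=0$.

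The main obstacle is twofold, and both pieces sit in the steps just sketched. Geometrically, plain ``$\eta$-non-flatness'' is not quite enough to force a Favard deficit: children arranged along a circular arc are non-flat yet still project onto essentially the full shadow in every direction, so the lemma of the second paragraph must be proved either for a strengthened notion of non-flatness that holds jointly across several consecutive scales, or after replacing the bare Favard length of the net by a subtler multiscale quantity --- for instance the multiplicity of a random line, which provably increases under refinement. Measure-theoretically, converting \emph{qualitative} pure unrectifiability into a \emph{uniform, positive-density} supply of non-flat scales on the bulk of $E$ is exactly the content of quantitative rectifiability: one wants the Carleson-type sum $\sum_j\beta(x,\lambda^{-j})$ of local non-flatness numbers to be infinite $\S^1$-a.e. on a purely unrectifiable set, with enough uniformity (via Egorov or a suitable stopping-time decomposition) to drive the product of deficit factors to zero. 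Reconciling this combinatorial book-keeping with the structure theory of $1$-sets is where the real work lies; the classical non-quantitative proof via the Besicovitch structure theorem and approximate tangent lines is a useful sanity check but does not, on its own, deliver the multiscale estimates this approach is after.
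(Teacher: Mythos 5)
You locate the main gap yourself, which is that the per-scale ``Favard deficit'' lemma in your second paragraph is false, and the fixes you gesture at in the last paragraph are unresolved sub-problems rather than arguments. Your circular-arc counterexample is the right one: $O(\lambda)$ children of radius $\delta/\lambda$ can sit along a short arc inside a parent ball of radius $\delta$, so the union is $\eta$-non-flat at scale $\delta$, yet for \emph{every} direction $\omega$ the shadow $\pi_\omega(\bigcup_i B_i)$ has length comparable to the parent's diameter, and no averaged deficit is produced. Moreover the bound $\sum_{B\in\N_\delta}\diam(B)\le 1+\S^1(E)$ is global, not per parent ball, so a parent may have far more than $\lambda$ children; nothing forces $\Fav(\bigcup\N_{\delta/\lambda})$ to drop by a fixed fraction of $\Fav(\bigcup\N_\delta)$, and indeed it need not drop at all over arbitrarily many refinement steps. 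There is a second, separately nontrivial gap you also flag: converting pure unrectifiability (a statement about intersections with Lipschitz graphs) into a \emph{uniform, positive-density} supply of $\eta$-non-flat scales on a set of near-full $\S^1$-measure is a theorem in its own right, and your Egorov/Carleson paragraph is a wish list, not a derivation.

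For contrast, the paper does not attempt a per-scale Favard deficit. It proves Theorem \ref{besithm} by reducing to $E$ compact in $B(0,1)$ (as you also do), then using \eqref{hie} and Proposition \ref{unrec} to construct a ladder of scales \eqref{scale-mono} satisfying the hypotheses of Theorem \ref{main} with $L=\S^1(E)+\eps$, and sending $N\to\infty$. All the substance is in the proof of Theorem \ref{main}, which finitises Mattila's qualitative argument: it partitions $E\times S^1$ into ``normal'' directions (controlled by the rectifiability constant $R_E$), high-multiplicity lines $H_n$ (controlled by the length bound), positive-multiplicity directions $P_n$ (controlled by a pigeonhole over scales), and high-density strips $D_n$ (controlled by the Hardy--Littlewood maximal inequality), bounds the Favard length of each piece, and assembles them via subadditivity \eqref{fav-sub}. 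Your instinct that ``multiplicity of a random line'' is the right quantity is in the right family --- the sets $H_n$, $P_n$ in Sections \ref{highmult}--\ref{posmult} track exactly that --- but in the paper it is one of four exceptional sets rather than the engine of a per-generation contraction, and the crucial scale selection is done by the pigeonhole Lemma \ref{pigeon} applied to nested families of subsets of $E\times S^1$, not by a $\sum\beta^2$-divergence criterion.
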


As a corollary of this and \eqref{fav-mono}, we obtain

\begin{corollary}\label{besi-cor}  Let $E \subset \R^2$ be an $\S^1$-measurable set such that $\S^1(E) < \infty$ and that $E$ is purely unrectifiable.  Suppose also that $E = \bigcap_{n=1}^\infty E_n$ for some nested compact sets $E_1 \supset E_2 \supset \ldots$ (in particular, $E$ is also compact).  Then $\lim_{n \to \infty} \Fav(E_n) = 0$.
\end{corollary}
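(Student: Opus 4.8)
The plan is to obtain the conclusion as an immediate consequence of Theorem \ref{besithm} together with the continuity property \eqref{fav-mono} of Favard length along nested compact sets. Since $E$ is $\S^1$-measurable with $\S^1(E) < \infty$ and $E$ is purely unrectifiable, Theorem \ref{besithm} applies directly and gives $\Fav(E) = 0$. It then remains only to transfer this vanishing from $E$ to the approximating sets $E_n$.

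To do this, I would set $A_n := E_n \times S^1$. Each $A_n$ is a compact subset of $\R^2 \times S^1$ (a product of compact sets), the sequence is nested, $A_1 \supset A_2 \supset \cdots$, and $\bigcap_{n=1}^\infty A_n = \bigl(\bigcap_{n=1}^\infty E_n\bigr) \times S^1 = E \times S^1$. By the monotonicity half of \eqref{fav-sub}, the numbers $\Fav(E_n) = \Fav(A_n)$ form a non-increasing sequence of non-negative reals, so $\lim_{n\to\infty} \Fav(E_n)$ exists; and by \eqref{fav-mono} applied to the sequence $(A_n)$, this limit equals $\Fav\bigl(\bigcap_{n=1}^\infty A_n\bigr) = \Fav(E \times S^1) = \Fav(E) = 0$. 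This gives the claim.

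I do not expect a genuine obstacle here: the corollary is a soft deduction, and all of the difficulty is concentrated in Theorem \ref{besithm} itself, the quantitative refinement of which is the true concern of this paper. Indeed, the interest of the corollary is precisely that an \emph{effective} version of Theorem \ref{besithm} would automatically upgrade this purely qualitative statement into an explicit rate of decay for $\Fav(E_n)$, which is what the multiscale analysis of the paper is designed to supply.
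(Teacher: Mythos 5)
Your argument is exactly the paper's intended deduction: Theorem \ref{besithm} gives $\Fav(E)=0$, and then \eqref{fav-mono} applied to the nested compact sets $E_n\times S^1$ transfers this to $\lim_{n\to\infty}\Fav(E_n)=0$. The paper states this only in passing (``As a corollary of this and \eqref{fav-mono}\ldots''), and your write-up fills in the same steps correctly.
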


As an instance of this corollary, let us recall the standard example of the product Cantor set. 

\begin{example}[Cantor set]\label{cantor}  Let $K := \{ \sum_{n=1}^\infty a_n 4^{-n}: a_n \in \{0,3\}\}$ be the middle-half Cantor set, then $K \times K$ has finite $\S^1$ measure and is purely unrectifiable (see Proposition \ref{twoproj} below), and thus has zero Favard length by Theorem \ref{besithm}.  If we let 
$$K_n := \{ \sum_{k=1}^\infty a_k 4^{-k}: a_k \in \{0,3\} \hbox{ for } 1 \leq k \leq n \hbox{ and } a_k \in \{0,1,2,3\} \hbox{ for } k > n \}$$
then $K \times K =\bigcap_{n=1}^\infty K_n \times K_n$, and thus by Corollary \ref{besi-cor} we have $\Fav(K_n \times K_n) \to 0$ as $n \to \infty$.  
\end{example}

\subsection{A quantitative Besicovitch projection theorem}

The standard proof of Corollary \ref{besi-cor} does not give an explicitly quantitative bound on how quickly $\Fav(E_n)$ decays to zero.  Even in the model case of the product Cantor set in Example \ref{cantor}, a non-trivial upper bound on $\Fav(K_n \times K_n)$ was only established recently in \cite{peres}, who established a bound of the form
\begin{equation}\label{perro}
\Fav(K_n \times K_n) \leq C e^{-c \log_* n}
\end{equation}
for some explicit absolute constants $C, c > 0$, where $\log_*$ is the inverse tower function
$$\log_* y := \min \{ n \geq 0: \log^{(n)} y \leq 1 \}$$
and $\log^{(n)} y$ is the $n^{th}$ iterated logarithm, thus for instance $\log^{(3)} y = \log\log\log y$.  This weak bound was strengthened more recently \cite{nazarov} to 
\begin{equation}\label{nazzo}
\Fav(K_n \times K_n)  \leq C n^{-c}
\end{equation}
for some explicit absolute constants $C, c > 0$; in the converse direction, an easy argument establishes the lower bound $\Fav(K_n \times K_n) \geq c/n$ for some $c>0$, which is expected to be sharp.

The argument in \cite{peres} also extends to several other model examples of unrectifiable sets, but to the author's knowledge no quantitative version of Theorem \ref{besithm} in its full generality has appeared in the literature.  This is the main purpose of the current paper; we will not quite be able attain the type of bounds in \eqref{perro} (and certainly not those in \eqref{nazzo}), but we will obtain some explicit bound nonetheless (see Proposition \ref{favar} below).  These results are perhaps not so terribly interesting in their own right, but the author hopes that they do illustrate a general point, namely that the qualitative (and ostensibly ineffective) arguments coming from infinitary measure theory (e.g. using the Lebesgue differentiation theorem) can often be converted into quantitative (but rather weak) bounds by use of multiscale analysis and the pigeonhole principle (see below).

To achieve these goals, we must first obtain a quantitative version of the unrectifiability hypothesis.  This will be achieved as follows.

\begin{definition}[Rectifiability constant]\label{rec}  Let $E$ be a set, and let $\eps, M > 0$.  We define the \emph{rectifiability constant} $R_E(\eps,r,M)$ of $E$ with Lipschitz constant $M$, error tolerance $\eps$, and scale $r$ to be the quantity
$$ R_E(\eps,r,M) = \sup \frac{m( \{ x \in J: x \omega_1 + (F(x)+y) \omega_2 \in E \hbox{ for some } -\eps \leq y \leq \eps \} )}{m(J)}$$
where the supremum ranges over all orthonormal $\omega_1, \omega_2 \in S^1$, all $F: \R \to \R$ with Lipschitz constant $\|F\|_{\Lip(\R)} \leq M$, and all intervals $J \subset \R$ of length at least $r$.
\end{definition}

Clearly we have the trivial bound $R(\eps,r,M) \leq 1$.  Pure unrectifiability is the assertion that one can improve upon this bound when $\eps \to 0$:

\begin{proposition}[Equivalence of qualitative and quantitative unrectifiability]\label{unrec}  Let $E$ be a compact subset of $\R^2$.  Then $E$ is purely unrectifiable if and only if $\lim_{\eps \to 0} R(\eps,r,M) = 0$ for all $r > 0$ and $M > 0$.
\end{proposition}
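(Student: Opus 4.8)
The plan is to prove the two implications separately; the ``if'' direction (from decay of the rectifiability constant to pure unrectifiability) is essentially immediate, so suppose first that $\lim_{\eps \to 0} R_E(\eps, r, M) = 0$ for all $r, M > 0$ (where $R_E$ denotes the rectifiability constant of Definition \ref{rec}), and fix orthonormal $\omega_1, \omega_2 \in S^1$ and Lipschitz $F : \R \to \R$, with $M := \|F\|_{\Lip(\R)}$. I would look at the fiber $S := \{ x \in \R : x\omega_1 + F(x)\omega_2 \in E \}$: it is closed, since $E$ is closed and $x \mapsto x\omega_1 + F(x)\omega_2$ is continuous, and bounded, since $x = (x\omega_1 + F(x)\omega_2) \cdot \omega_1$ is recovered from a point of the bounded set $E$; hence $m(S) < \infty$. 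Taking $y = 0$ in Definition \ref{rec} gives $m(S \cap J) \le R_E(\eps, r, M)\, m(J)$ for every $\eps > 0$ and every interval $J$ of length at least $r$; covering $S$ by finitely many intervals of length $r$, summing, and letting $\eps \to 0$ forces $m(S) = 0$. Since $\omega_1, \omega_2, F$ were arbitrary, $E$ is purely unrectifiable.

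For the ``only if'' direction I would argue by contradiction. As $R_E(\eps, r, M)$ is non-decreasing in $\eps$, the limit $\lim_{\eps \to 0} R_E(\eps, r, M) = \inf_{\eps > 0} R_E(\eps, r, M)$ exists; suppose it is at least $\delta > 0$ for some $r, M > 0$. Then for each $n \ge 1$ one may choose orthonormal $\omega_1^{(n)}, \omega_2^{(n)} \in S^1$, a Lipschitz $F^{(n)}$ with $\|F^{(n)}\|_{\Lip(\R)} \le M$, and an interval $J^{(n)}$ of length at least $r$ with $m(G^{(n)}) \ge \frac{\delta}{2} m(J^{(n)})$, where $G^{(n)} := \{ x \in J^{(n)} : x\omega_1^{(n)} + (F^{(n)}(x) + y)\omega_2^{(n)} \in E \hbox{ for some } |y| \le 1/n \}$. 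The aim is to pass to a subsequence along which all this data converges, producing a single Lipschitz graph that meets $E$ in a set of positive Lebesgue measure, which contradicts pure unrectifiability.

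The extractions run as follows. Compactness of $S^1$ gives a subsequence with $\omega_i^{(n)} \to \omega_i$, the limits still orthonormal. Writing $\rho := \sup_{z \in E} |z| < \infty$, every $x \in G^{(n)}$ satisfies $|x| \le \rho$ (it is the $\omega_1^{(n)}$-component of a point of $E$), and exploiting this one can arrange --- replacing $J^{(n)}$ by $J^{(n)} \cap [-\rho, \rho]$ whenever that still has length $\ge r$, and otherwise noting that then $m(J^{(n)}) \le 2r/\delta$ --- that all the $J^{(n)}$ lie in a single fixed compact interval $I$, with $r \le m(J^{(n)}) \le m(I)$; a further subsequence then has $J^{(n)} \to J$ with $m(J) \ge r$. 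Picking $x_n \in G^{(n)}$ gives $|F^{(n)}(x_n)| \le \rho + 1$, so by the Lipschitz bound the family $\{F^{(n)}\}$ is uniformly bounded (and equicontinuous) on $I$, and Arzel\`a--Ascoli yields a subsequence with $F^{(n)} \to F$ uniformly on $I$, where $F$ is Lipschitz with constant $\le M$ on $I$ and extends to all of $\R$ with the same constant.

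The crux is then to pass the measure lower bound to the limit. If $x \in G^{(n)}$ for infinitely many $n$ along the final subsequence, choose $|y_n| \le 1/n$ with $x\omega_1^{(n)} + (F^{(n)}(x) + y_n)\omega_2^{(n)} \in E$; since $F^{(n)} \to F$ uniformly on $I \ni x$, $\omega_i^{(n)} \to \omega_i$, and $y_n \to 0$, these points converge to $x\omega_1 + F(x)\omega_2$, which therefore lies in $E$ by closedness, and $x \in J$ as well. Hence $\limsup_n G^{(n)} \subseteq G := \{ x \in J : x\omega_1 + F(x)\omega_2 \in E \}$. Since the $G^{(n)}$ all lie in the finite-measure set $I$, the reverse Fatou lemma gives $m(G) \ge \limsup_n m(G^{(n)}) \ge \frac{\delta}{2} \limsup_n m(J^{(n)}) = \frac{\delta}{2} m(J) \ge \frac{\delta r}{2} > 0$; but $G \subseteq \{ x \in \R : x\omega_1 + F(x)\omega_2 \in E \}$, so this contradicts pure unrectifiability of $E$, completing the proof. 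I expect the main obstacle to be precisely this compactness argument, and within it the two bookkeeping points above: keeping the intervals $J^{(n)}$ from escaping to infinity (handled by boundedness of $E$, which confines the useful part of each $J^{(n)}$ to a fixed compact interval) and securing the uniform bound on the $F^{(n)}$ needed for Arzel\`a--Ascoli (extracted from boundedness of $E$ together with non-emptiness of the $G^{(n)}$). Once the data converges, the transfer of the lower bound via reverse Fatou and the use of closedness of $E$ to land the limiting points in $E$ are routine.
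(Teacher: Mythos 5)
Your proof is correct and follows essentially the same route as the paper: extract convergent subsequences of $\omega_i^{(n)}$, $J^{(n)}$, $F^{(n)}$ via Bolzano--Weierstrass and Arzel\`a--Ascoli, show the limit superior of the $G^{(n)}$ lands in a single Lipschitz graph slice, and pass the measure lower bound through (the paper cites ``Fubini'' where you more accurately invoke reverse Fatou). You are somewhat more careful about the bookkeeping that keeps the $J^{(n)}$ in a fixed compact interval and about spelling out the easy ``if'' direction, which the paper compresses to ``one easily verifies.''
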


\begin{proof} If $E$ is not purely unrectifiable, then $E \cap \Gamma$ has positive $\S^1$-measure for some Lipschitz graph $\Gamma$ with some Lipschitz constant $M$, and one easily verifies that $R(\eps,r,M)$ is then bounded from below uniformly in $\eps$ for every $r > 0$.  Now suppose for contradiction that $E$ is purely unrectifiable, but that there exists $r > 0$ and $M > 0$ such that $R(\eps,r,M)$ does not converge to zero as $\eps \to 0$.  Thus there exists a sequence $\eps_n \to 0$ and $\delta > 0$ such that $R(\eps_n,r,M) > \delta$ for all $n$.  From Definition \ref{rec}, there thus exists orthonormal $\omega_{1,n}, \omega_{2,n} \in S^1$, functions $F_n: \R \to \R$, and intervals $J_n \subset \R$ with $m(J_n) \geq r$ such that
$$ \frac{m( \{ x \in J_n: x \omega_{1,n} + (F_n(x)+y) \omega_{2,n} \in E \hbox{ for some } -\eps_n \leq y \leq \eps_n \} )}{m(J_n)} > \delta.$$
Since $E$ is compact, we conclude that $J_n$ must be contained in a fixed bounded set; similarly $F_n$ must take values in a fixed bounded range.  By the Bolzano-Weierstrass theorem we may thus pass to a subsequence and assume that $J_n$ converges to a fixed interval $J$ of length $m(J) \geq r$, thus $m(J_n \backslash J) + m(J \backslash J_n) \to 0$.  Similarly we may assume that $\omega_{1,n} \to \omega_1$ and $\omega_{2,n} \to \omega_2$ for some orthonormal $\omega_1,\omega_2 \in S^1$.  By the Arzela-Ascoli theorem we can also assume that $F_n$ converges uniformly to some $F$, which then has Lipschitz norm of at most $M$.  From the compactness of $E$, we see that the set
\begin{equation}\label{joe}
\{ x \in J: x \omega_1 + F(x) \omega_2 \in E \}
\end{equation}
contains the limit superior of the sets
$$ \{ x \in J_n: x \omega_{1,n} + (F_n(x)+y) \omega_{2,n} \in E \hbox{ for some } -\eps_n \leq y \leq \eps_n \},$$
in the sense that any point in the interior of $J$ which lies in infinitely many of the latter, must also lie in the former.  By Fubini's theorem we thus see that the set \eqref{joe} has positive measure, contradicting pure unrectifiability.
\end{proof}	

One might now naively hope that a quantitative version of the Besicovitch projection theorem would assert that if $E$ was an $\S^1$-measurable compact set with some bounded spherical (or Hausdorff) measure and bounded diameter, and if we had some suitable control on the rectifiability constants, then we would obtain an explicit non-trivial upper bound on $\Fav(E)$, which would go to zero as the rectifiability constants went to zero uniformly in $E$.  We do not know how to achieve this, and in fact suspect that such a statement is probably false (it is somewhat analogous to asking for quantitative bounds on the a.e. convergence in Lebesgue's differentiation theorem $\lim_{r \to 0} \frac{1}{2r} \int_{x-r}^{x+r} f(y)\ dy = f(x)$ which are uniform for all bounded $f$; such uniform bounds are well known to be impossible).  The problem is that a bound $\S^1(E) < L$ on the spherical measure of a set $E$ is still partially qualitative; it asserts that the spherical content $\S^1_{0,r_+}(E)$ is eventually less than $L$, but does not give an effective bound on the scale $r_+$ at which this occurs.  If however we make these scales explicit, we can in fact recover an effective bound, which is the main result of this paper:

\begin{definition}[Asymptotic notation] We use $X \lesssim Y$ or $X = O(Y)$ to denote the estimate $X \leq CY$ for some absolute constant $C > 0$.  We use $X \sim Y$ to denote the estimates $X \lesssim Y \lesssim X$.
\end{definition}

\begin{theorem}[Quantitative Besicovitch projection theorem]\label{main}  Let $L > 0$, and let $E \subset \R^2$ be a compact subset of the unit ball $B(0,1)$ with $\S^1(E) \leq L$.  Let $N \geq 1$ be an integer, and suppose that we can find scales
\begin{equation}\label{scale-mono}
0 < r_{N,-} \leq r_{N,+} \leq \ldots \leq r_{1,-} \leq r_{1,+} \leq 1
\end{equation}
obeying the following three properties:
\begin{itemize}
\item (Uniform length bound) For all $1 \leq n \leq N$ we have
\begin{equation}\label{length-bound}
{\mathcal H}^1_{r_{n,-}, r_{n,+}}(E) \leq L.
\end{equation}
\item (Scale separation)  For all $1 \leq n < N$ we have 
\begin{equation}\label{scale-sep}
r_{n+1,+} \leq \frac{1}{2} r_{n,-}.
\end{equation}
\item (Unrectifiability)  For all $1 \leq n < N$ we have
\begin{equation}\label{re}
 R_E( r_{n+1,+}, r_{n,-}, \frac{1}{r_{n,-}} ) \leq N^{-1/100}. 
\end{equation}
\end{itemize}
Then we have
$$ \Fav(E) \lesssim N^{-1/100} L.$$
\end{theorem}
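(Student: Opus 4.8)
The plan is to pigeonhole the Favard length across the $N$ scales, spending the quantitative unrectifiability at a single favourable scale. Using the uniform length bound \eqref{length-bound}, for each $1 \le n \le N$ I would fix a cover $\B_n$ of $E$ by open balls of radius in $[r_{n,-},r_{n,+}]$ with $\sum_{B \in \B_n}\diam(B) \lesssim L$; by a Vitali-type selection one may take $\B_n$ to have bounded overlap, and --- using the scale separation \eqref{scale-sep} --- one may, after enlarging the balls by a bounded factor (at the cost of absolute constants), also arrange that the open sets $U_n := \bigcup_{B\in\B_n}B$ are nested, $U_1 \supseteq \cdots \supseteq U_N \supseteq E$. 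Set $Q_n := \Fav(U_n)$. Since $\Fav(B)=\diam(B)$ for a ball $B$, subadditivity \eqref{fav-sub} gives $Q_1 \lesssim L$, while $\Fav(E)\le Q_N$ because $E\subseteq U_N$, so it suffices to prove $Q_N \lesssim N^{-1/100}L$. The sequence $(Q_n)_{n=1}^N$ is non-increasing, so $\sum_{n=1}^{N-1}(Q_n-Q_{n+1}) = Q_1 - Q_N \lesssim L$, and the pigeonhole principle furnishes a scale $n_* \in\{1,\dots,N-1\}$ at which the Favard length is nearly stationary:
$$ Q_{n_*} - Q_{n_*+1} \;\lesssim\; L/N . $$

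The crux is then a single-scale estimate asserting that this near-stationarity, together with the quantitative unrectifiability \eqref{re} at scale $n_*$, forces
$$ Q_{n_*+1} \;\lesssim\; N^{-1/100} L + \bigl(Q_{n_*}-Q_{n_*+1}\bigr) . $$
Granting this, and using $N^{-1}\le N^{-1/100}$, one gets $\Fav(E)\le Q_N\le Q_{n_*+1}\lesssim N^{-1/100}L$, which is the theorem. To see why such an estimate should be available, write $\pi_\omega(x):=x\cdot\omega$, so that $\Fav(A)=\int_{S^1}|\pi_\omega(A)|\,d\sigma$ for open $A\subseteq\R^2$. Since $U_{n_*+1}\subseteq U_{n_*}$, the near-stationarity says precisely that, outside an exceptional set of pairs $(c,\omega)$ of measure $O(L/N)$, every line $l_{c,\omega}$ meeting $U_{n_*}$ already meets one of the tiny radius-$r_{n_*+1,-}$ balls of $\B_{n_*+1}$; so it is enough to bound $\Fav(U_{n_*+1})$ itself, and --- morally, ball by ball over $B\in\B_{n_*}$ with $R$ its radius --- it is enough to show that $\Fav\bigl(\bigcup_{B'\in\B_{n_*+1},\,B'\subseteq B}B'\bigr)\lesssim N^{-1/100}R$ for all but a negligible subcollection of the balls $B$. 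Indeed, a ball $B$ for which $E\cap B$ has diameter much smaller than $R$ contributes only $\lesssim \diam(E\cap B)$ automatically, so summing the per-ball bounds and using $\sum_{B\in\B_{n_*}}\diam(B) \lesssim L$ produces a total of size $O(N^{-1/100}L)$.

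This last assertion is where Definition \ref{rec} enters, and it is the step I expect to be the main obstacle. One must show that if $\Fav\bigl(\bigcup_{B'\subseteq B}B'\bigr)$ fails to be $\lesssim N^{-1/100}R$, then $E\cap B$ is forced --- up to the admissible tolerance $r_{n_*+1,+}$, which is at least $r_{n_*+1,-}$ --- to lie inside a graph-neighbourhood $\{\,x\omega_1+(F(x)+y)\omega_2:\ x\in J,\ |y|\le r_{n_*+1,+}\,\}$ of some $F$ with $\|F\|_{\Lip(\R)}\lesssim 1/r_{n_*,-}$ over an interval $J$ of length $\gtrsim r_{n_*,-}$, while meeting $E$ in a proportion $\gg N^{-1/100}$ of $J$ --- which is exactly the configuration that \eqref{re} rules out. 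In effect this requires a quantitative, single-scale converse to the Besicovitch phenomenon --- ``appreciable Favard length at a given scale forces an appreciable, nearly straight, piece of the set'' --- and since the round balls of $\B_{n_*+1}$ do not assemble themselves into a graph, the comparison function $F$ must be constructed by hand, for instance from an extremal-point selection along the $\omega_1$ direction, with the geometric errors absorbed into the $r_{n_*+1,+}$-tolerance and the $O(N^{-1/100})$ slack. A secondary, bookkeeping difficulty is that $Q_{n_*}-Q_{n_*+1}$ is a global, overlap-sensitive quantity, so its $O(L/N)$ budget of exceptional lines must be correctly apportioned among the balls of $\B_{n_*}$ without any accumulated error swelling to something like $O(L/\log N)$; keeping all errors genuinely of size $O(N^{-1/100}L)$ is what the deliberately wasteful exponent $1/100$ is there to absorb.
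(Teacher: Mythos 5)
Your pigeonhole on $\Fav(U_n)$ is genuinely different from what the paper does, and the gap lies exactly where you flagged it: the ``single-scale converse to Besicovitch'' lemma is the entire content, and the near-stationarity you extracted from the pigeonhole does not actually help you prove it. Your sketch of the crux (``if $\Fav(\bigcup_{B'\subseteq B}B')$ fails to be $\lesssim N^{-1/100}R$, then $E\cap B$ lies in a graph-neighbourhood meeting $E$ in proportion $\gg N^{-1/100}$'') is a per-ball statement that depends only on the two scales $n_*$ and $n_*+1$ and on \eqref{re} at $n_*$; the $O(L/N)$ exceptional budget is only used to dispose of lines that escape the cover refinement. So modulo bookkeeping you have reduced the theorem to the claim that unrectifiability across \emph{one} scale gap, plus the length bound, already forces $\Fav(E)\lesssim N^{-1/100}L$. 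That claim is not true as a per-ball assertion: a union of small balls inside a ball $B$ of radius $R$ with $\sum\diam\lesssim R$ can easily have Favard length $\gtrsim R$ while $E\cap B$ sits in a high-multiplicity or ``fan''/annular configuration in which no single Lipschitz graph (of controlled constant, over an interval of length $\gtrsim R$, with tolerance $r_{n_*+1,+}$) captures a proportion $\gg N^{-1/100}$. Large Favard length of a covering is a statement about the union of shadows; it is insensitive to the multiplicity with which lines meet the set and to how the mass of $E$ is distributed along strips, and these are precisely the obstructions to extracting a graph.

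The paper does not pigeonhole on $\Fav(U_n)$ at all. Instead it splits $E\times S^1$ into a ``non-normal'' part that is shown, by a genuine geometric lemma (Lemma \ref{alip} and its corollary), to be contained in graph-neighbourhoods at a controlled scale, so that \eqref{re} applies --- this is the part your sketch is reaching for --- and a ``normal'' part (Definition \ref{norm-def}) where sectors $X_{x,\omega}(r,M)$ carry too much $\mu$-mass for the graph picture to hold. Controlling the normal part is where the scale freedom is actually spent, and it requires \emph{three separate} pigeonhole arguments (Lemma \ref{pigeon}) on three different nested families: high-multiplicity directions $H_n$ (Section \ref{highmult}), positive-multiplicity directions $P_n$ (Section \ref{posmult}), and high-density strips $D_n$ (Section \ref{highdens}). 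Each of those is where the length bound \eqref{length-bound} at many scales, together with the Hardy--Littlewood maximal inequality, rules out precisely the configurations that defeat your per-ball claim. Your single pigeonhole throws away the other $N-1$ scales, but those scales are exactly the resource needed to find favourable indices at which $H_n$, $P_n$, $D_n$ are simultaneously stable. To repair your approach you would have to prove the per-ball converse, and at that point you would be forced to re-introduce multiplicity and density pigeonholings internal to the lemma, making the outer Favard-pigeonhole redundant. As it stands, the proposal identifies the right reduction target (a quantitative rectifiability dichotomy per ball) but supplies no argument for it, and the evidence from the paper is that such a dichotomy is not a two-scale fact.
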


Note that by applying \eqref{length-bound} at a single scale $n$, we only obtain the trivial bound $\Fav(E) \leq L$.  The point is that we can improve upon this bound by using multiple separated scales, as long as at each scale, $E$ is quantitatively unrectifiable relative to the next finer scale.  We remark that the factors of 100 can certainly be lowered (for instance, one can easily replace these factors with $10$) but we have exaggerated these constants in order to clarify the argument (and also because these bounds are in any event extremely poor).

We observe that Theorem \ref{main} easily implies Theorem \ref{besithm}.  Indeed, for the latter theorem one can quickly reduce to the case when $E$ is compact (basically because the restriction of spherical measure to $E$ is a Radon measure, and also because Lemma \ref{aproj} below allows one to neglect sets of small spherical measure for the purposes of computing Favard length); we can then normalise $E$ to lie in the unit ball $B(0,1)$.  Given any $N$, one can use \eqref{hie} and Proposition \ref{unrec} to iteratively construct scales \eqref{scale-mono} obeying the properties in Theorem \ref{main}, with $L$ set equal to $\S^1(E) + \eps$ for some $\eps > 0$.  Setting $N \to \infty$ we obtain Theorem \ref{besithm}.

Our proof is essentially a finitised version of the one in the book \cite{mattila} by Mattila, and in particular uses essentially the same geometric ingredients; we give it in Sections \ref{norm-sec}-\ref{conclude-sec}.  The main difficulty is to translate the qualitative components of the arguments in \cite{mattila} to a quantitative version. For instance, a fundamental fact in qualitative measure theory is that countably additive measures are continuous from below; thus if $(X,\mu)$ is a measure space and $E_0 \subset E_1 \subset E_2 \subset \ldots$ are measurable then $\mu( \bigcup_{n=1}^\infty E_n ) = \lim_{n \to \infty} \mu(E_n)$.  We will rely heavily on the following simple quantitative version of this fact.

\begin{lemma}[Pigeonhole principle]\label{pigeon}  Let $(X,\mu)$ be a measure space, and let $E_0 \subset E_1 \subset \ldots \subset E_N$ be any sequence of measurable subsets of $X$ with $N \geq 2$.  If $1/N \leq \eps \leq 1/2$, then there exists $0 \leq n < m \leq N$ with $m-n \geq \eps N$ such that $\mu( E_m \backslash E_n ) \lesssim \eps \mu(E_N)$.
\end{lemma}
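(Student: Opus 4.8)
The plan is to pass from the long chain $E_0 \subseteq \cdots \subseteq E_N$ to a subchain sampled along an arithmetic progression of indices whose common difference $k$ is comparable to $\eps N$, and then apply a one-line averaging argument to the disjoint ``annuli'' $E_{i_{j+1}} \backslash E_{i_j}$. The design constraint is that $k$ must be at least $\eps N$ (so that $m-n \ge \eps N$) yet at most a constant multiple of $\eps N$ (so that there are at least $\gtrsim 1/\eps$ annuli, forcing one of them to be small); since $\eps N \ge 1$, there is room to choose such a $k$.

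Concretely, I would first dispose of the case $\mu(E_N) = \infty$, in which the asserted bound $\mu(E_m \backslash E_n) \lesssim \eps \mu(E_N)$ is vacuous (take $n=0$, $m=N$), so one may assume $\mu(E_N) < \infty$ and thus freely use finite additivity. Then set $k := \lceil \eps N \rceil$, so that $\eps N \le k \le \eps N + 1 \le 2\eps N$, and note $k \le N$ because $\eps \le 1/2$. Put $J := \lfloor N/k \rfloor \ge 1$, define $i_j := jk$ for $0 \le j \le J-1$ and $i_J := N$; these form a strictly increasing sequence $0 = i_0 < i_1 < \cdots < i_J = N$ all of whose consecutive gaps are at least $\eps N$ (the interior gaps equal $k \ge \eps N$, and the final gap $N - (J-1)k$ is at least $k$ since $Jk \le N$).

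Next I would observe that the sets $E_{i_{j+1}} \backslash E_{i_j}$ for $j = 0,\dots,J-1$ are pairwise disjoint, by nestedness, with union $E_N \backslash E_0$; hence their measures sum to $\mu(E_N \backslash E_0) \le \mu(E_N)$, and so some index $j$ satisfies $\mu(E_{i_{j+1}} \backslash E_{i_j}) \le \mu(E_N)/J$. It then remains to bound $1/J$ by a constant multiple of $\eps$: from $\eps \ge 1/N$ one gets $N/k \ge N/(\eps N + 1) \ge 1/(2\eps) \ge 1$, and since $\lfloor x\rfloor \ge x/2$ for all $x \ge 1$ this gives $J \ge 1/(4\eps)$. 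Therefore $\mu(E_{i_{j+1}} \backslash E_{i_j}) \le 4\eps\,\mu(E_N) \lesssim \eps\,\mu(E_N)$, and taking $n := i_j$, $m := i_{j+1}$ finishes the proof.

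I do not expect any genuine obstacle: the statement is elementary, and the entire content is the choice of spacing $k$ balancing the two competing requirements above. The only points needing a little care are coordinating those two requirements, handling the last (possibly shorter) block $[i_{J-1}, i_J]$ separately, and recording that the infinite-measure case is trivial.
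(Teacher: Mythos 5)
Your proof is correct, and it differs from the paper's in a small but genuine way. Tao also sets $k = \lceil \eps N\rceil$, but instead of carving the chain into $\approx 1/\eps$ disjoint blocks along an arithmetic progression, he considers \emph{all} $N-k+1$ overlapping windows $E_{n+k}\setminus E_n$ for $0\le n\le N-k$, observes that each point of $E_N$ lies in at most $k+1\sim \eps N$ of these difference sets, and averages. That bounded-overlap count delivers the conclusion in one sweep, with no shorter final block to treat separately and no need for the $\lfloor x\rfloor\ge x/2$ device; your disjoint-annulus decomposition has the compensating advantage that the additivity step is immediate rather than resting on a (mild) multiplicity bound. Both hinge on the same choice of $k$ and give the same qualitative estimate $\lesssim \eps\,\mu(E_N)$ with $m-n\ge \eps N$. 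Your explicit aside about the $\mu(E_N)=\infty$ case is a reasonable bit of bookkeeping that the paper leaves implicit, since the conclusion is vacuous there under either formulation.
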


\begin{proof} Let $k$ be the first integer greater than or equal to $\eps N$.  Observe that any element of $E_N$ belongs to at most $k+1 \sim \eps N$ sets of the form $E_{n+k} \backslash E_n$ for $0 \leq n \leq N-k$.  The claim then follows from the pigeonhole principle.
\end{proof}

Roughly speaking, this lemma will allow us to reduce the size of certain exceptional sets by an arbitrary factor $\eps$, at the cost of reducing the number of available scales by the same factor $\eps$.  We will generally apply this lemma with $\eps$ equal to some power of $N^{-1/100}$, ensuring that there are always plenty\footnote{One could work more efficiently here by running all the pigeonhole arguments ``in parallel'' rather than ``in series'', as is for instance done in Section \ref{conctwo}, but this only leads to a modest improvement in the final exponents, and also obscures the exposition somewhat, so we have chosen this more conceptually simple approach.} of scales available.

\subsection{A quantitative two projection theorem}

In order to apply the Besicovitch projection theorem, one of course needs to verify the hypothesis of unrectifiability; similarly, in order to apply Theorem \ref{main}, one needs some non-trivial quantitative decay rate on the rectifiability constants $R_E(\eps,r,M)$ as $\eps \to 0$.  One such tool to achieve the former is the following simple and well-known result, which among other things shows that the product Cantor set is purely unrectifiable.

\begin{proposition}[Two projection theorem]\label{twoproj}  Let $E \subset \R^2$ be a compact set such that two of its projections $E_{\omega} := \{ x \cdot \omega: x \in E \}$ and $E_{\omega'} := \{ x \cdot \omega': x \in E \}$ have measure zero, where $\omega, \omega' \in S^1$ are distinct and not antipodal.  Then $E$ is purely unrectifiable.
\end{proposition}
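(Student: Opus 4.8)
The plan is to establish the contrapositive by a tangent-vector argument whose only nontrivial ingredient is the one-dimensional area formula. Suppose $E$ is not purely unrectifiable; by definition $\S^1(E \cap \Gamma) > 0$ for some Lipschitz graph $\Gamma = \{x\eta_1 + F(x)\eta_2 : x \in \R\}$ with $\eta_1, \eta_2 \in S^1$ orthonormal and $\|F\|_{\Lip(\R)} \le M$. I would parametrise $\Gamma$ by $\gamma(x) := x\eta_1 + F(x)\eta_2$ and set $A := \{x \in \R : \gamma(x) \in E\}$, so that $\gamma(A) = E \cap \Gamma$; since $\gamma$ is Lipschitz and $\S^1$ is comparable to $\H^1$, positivity of $\S^1(E \cap \Gamma)$ forces $m(A) > 0$. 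By Rademacher's theorem the tangent vector $\tau(x) := \eta_1 + F'(x)\eta_2$ exists for a.e.\ $x$, and it satisfies $|\tau(x)| \ge 1$ there.

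The key step is the dichotomy: for any fixed $\omega \in S^1$, if $m(E_\omega) = 0$ then $\tau(x) \cdot \omega = 0$ for a.e.\ $x \in A$. I would prove this by noting that the restriction to $\Gamma$ of the projection $x' \mapsto x' \cdot \omega$ is the Lipschitz function $\phi_\omega := \gamma(\cdot) \cdot \omega$, with $\phi_\omega' = \tau(\cdot)\cdot\omega$ a.e., and with $\phi_\omega(A) \subseteq E_\omega$, so that $m(\phi_\omega(A)) = 0$. The one-dimensional area formula then gives
$$ \int_A |\phi_\omega'(x)|\,dx = \int_{\R} \#\big(\phi_\omega^{-1}\{t\} \cap A\big)\,dt = \int_{\phi_\omega(A)} \#\big(\phi_\omega^{-1}\{t\} \cap A\big)\,dt = 0, $$
the last integral being over a set of measure zero; hence $\phi_\omega' = 0$, that is $\tau(\cdot)\cdot\omega = 0$, a.e.\ on $A$.

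To finish, I would apply this dichotomy to both $\omega$ and $\omega'$: for a.e.\ $x \in A$ the vector $\tau(x)$ is then orthogonal to both $\omega$ and $\omega'$, which are linearly independent because they are distinct and not antipodal; hence $\tau(x) = 0$ for a.e.\ $x \in A$. This contradicts $|\tau(x)| \ge 1$ together with $m(A) > 0$, so $E$ must be purely unrectifiable.

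The part to get right is not the geometry but the measure-theoretic input used in the key step: the classical ``Banach indicatrix'' fact, equivalent to the one-dimensional area formula, that a Lipschitz $g : \R \to \R$ with $m(g(S)) = 0$ must have $g' = 0$ a.e.\ on $S$. If one wished to avoid it entirely, an alternative is to invoke the standard structure fact that a Lipschitz graph is, up to an $\H^1$-null set, a countable union of Lipschitz graphs with arbitrarily small Lipschitz constant $\delta$: on such a small-constant graph over a frame $(\eta_1,\eta_2)$ one has the lower bound $|\phi_\omega(x) - \phi_\omega(y)| \ge (|\eta_1 \cdot \omega| - \delta|\eta_2 \cdot \omega|)\,|x - y|$, which is effective whenever $|\eta_1 \cdot \omega| > \delta |\eta_2 \cdot \omega|$, and choosing $\delta$ small relative to the angular separation of $\omega$ from $\{\pm \omega'\}$ guarantees this holds for at least one of $\omega, \omega'$, forcing the corresponding projection to have positive measure --- a contradiction as before.
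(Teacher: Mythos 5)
Your proof is correct, but it takes a genuinely different route from the paper's. The paper's proof is pointwise: it invokes the Lebesgue differentiation theorem to locate a single density point $x_0 \in A$ at which $F$ is differentiable and at which at least one of $\omega \cdot (1,F'(x_0))$, $\omega' \cdot (1,F'(x_0))$ is nonzero, and then asserts (without detail) that the image of $A$ near $x_0$ under the corresponding projection has positive measure. Your proof replaces the pointwise density argument with a global one: the one-dimensional area formula (equivalently the Banach indicatrix / Sard-type fact) shows directly that $\tau(\cdot)\cdot\omega = \tau(\cdot)\cdot\omega' = 0$ a.e.\ on $A$, which contradicts $|\tau| \ge 1$ once $m(A) > 0$. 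This sidesteps the slightly delicate final step of the paper's argument entirely and is, if anything, tighter as a qualitative proof. What the paper's version buys, and presumably why it was chosen, is that it isolates exactly the three ingredients --- Lebesgue differentiation, Rademacher differentiation, and a local ``nonvanishing derivative implies positive image measure'' step --- that are then finitised one-by-one in Sections~\ref{qlt}--\ref{conctwo} to prove Theorem~\ref{twoproj-quant}; your area-formula route doesn't map as transparently onto that quantitative template. Your alternative (decomposing the Lipschitz graph into pieces of small Lipschitz constant and using the resulting bi-Lipschitz projection bound) is also sound, and is in spirit closer to how one actually verifies the paper's unproved last step.
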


\begin{proof}  Suppose for contradiction that $E$ was not purely rectifiable, thus (after a rotation if necessary) there exists a Lipschitz graph $\{ (x,F(x)): x \in \R \}$ such that the (compact) set $A := \{ x \in \R: (x,F(x)) \in E \}$ had positive Lebesgue measure.  By the Lebesgue differentiation theorem, almost every $x \in A$ is a point of density of $A$, thus $\lim_{r \to 0} m( A \cap [x-r,x+r] ) / 2r = 1$.  Also, by the Radamacher differentiation theorem, $F(x)$ is differentiable for almost every $x$.  Thus we can find an $x_0 \in A$ which is a point of density and where $F$ has some derivative $F'(x_0)$.  Since $\omega, \omega'$ are distinct and not antipodal, at least one of the inner products $\omega \cdot (1, F'(x_0))$ and $\omega' \cdot (1, F'(x_0))$ is non-zero; without loss of generality we can assume $\omega \cdot (1, F'(x_0))$ is not zero.  It is then not difficult to show that the set $\{ \omega \cdot (x,F(x)): x \in A, |x-x_0| \leq \eps \}$ has positive measure for all sufficiently small $\eps > 0$.  But this set is contained in $E_\omega$, contradicting the hypothesis.
\end{proof}

It is thus natural to ask for a quantitative version of the above proposition.  Inspecting the above argument, we see that a proof of such a quantitative version is likely to require some sort of quantitative Lebesgue differentiation theorem and a quantitative Radamacher differentiation theorem.   This can in fact be done relatively easily, and leads to the following quantitative version:

\begin{theorem}[Quantitative two projection theorem]\label{twoproj-quant}  Let $E \subset B(0,1)$ be a compact set, and let $\omega, \omega' \in S^1$ be such that $\angle(\omega,\omega'), \angle(\omega,-\omega') \sim 1$, where $0 \leq \angle(\omega,\omega') \leq \pi$ denotes the signed angle between two unit vectors.  Suppose also that we have a sequence of scales
$$
0 < r_N < r_{N-1} < \ldots < r_1 \leq 1
$$
with the following properties.
\begin{itemize}
\item (Scale separation) Each $r_n$ is a negative power of two; in particular for all $1 \leq n < N$ we have
\begin{equation}\label{scalesep-2}
r_{n+1} \leq \frac{1}{2} r_n.
\end{equation}
\item (Small projections) If $E_{\omega} := \{ x \cdot \omega: x \in E \}$ and $E_{\omega'} := \{ x \cdot \omega': x \in E \}$, then we have
\begin{equation}\label{smallproj}
 m( \N_{r_{n+1}}(E_{\omega}) ), m( \N_{r_{n+1}}(E_{\omega'}) ) \leq r_n
\end{equation}
for all $1 \leq n < N$, where $\N_r(A)$ denotes the open $r$-neighbourhood of a set $A$.
\end{itemize}
Then we have
$$ R_E( r_N, 1, N^{1/100} ) \lesssim N^{-1/100}.$$ 
\end{theorem}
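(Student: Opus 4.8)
The plan is to make effective the two ingredients of the qualitative proof of Proposition~\ref{twoproj} --- the Lebesgue density theorem and Rademacher's theorem --- by a dyadic multiscale analysis. First I would reduce matters: fix orthonormal $\omega_1,\omega_2$, a function $F$ with $\|F\|_{\Lip(\R)}\le M:=N^{1/100}$, and an interval $J$ with $m(J)\ge 1$, and set
$$A:=\{x\in J:\ x\omega_1+(F(x)+y)\omega_2\in E\ \text{for some }|y|\le r_N\}.$$
Since $E\subset B(0,1)$ we have $A\subset J\cap[-1,1]$, so $m(A)/m(J)\le m(A)$ and it suffices to show $m(A)\lesssim N^{-1/100}$. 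The geometric input is a linear-algebra observation: because $\angle(\omega,\omega'),\angle(\omega,-\omega')\sim 1$, for every $c\in\R$ the vector $w_c:=\omega_1+c\omega_2$ satisfies $|w_c\cdot\omega|^2+|w_c\cdot\omega'|^2\gtrsim|w_c|^2\ge 1$; hence for any affine $L(x)=cx+d$ with $|c|\le M$, at least one of the two affine maps $x\mapsto x(\omega_1\cdot\omega)+L(x)(\omega_2\cdot\omega)$ and its $\omega'$-counterpart has derivative of absolute value $\ge c_0$ for some absolute $c_0>0$, and so is injective with $|{\cdot}'|\ge c_0$.

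Next I would run the multiscale step. Working in the dyadic grid of $\R$ --- with the scale-separation hypothesis \eqref{scalesep-2} letting me treat each $r_n$ as a dyadic generation --- I would use the dyadic square-function identity for the martingale $s_n(x):=$ (chord slope of $F$ on the dyadic interval of length $r_n$ about $x$), namely $\sum_{n<N}\int_{[-2,2]}|s_{n+1}-s_n|^2\,dx\le\|F'\|_{L^2([-2,2])}^2\lesssim M^2$. Since $M^2=N^{1/50}\ll N$, the pigeonhole principle (Lemma~\ref{pigeon}) should produce a scale $n$ among the first $\sim N$ at which $F$ is \emph{quantitatively flat}: all dyadic intervals $I$ of length $r_n$, save a subcollection of total length $\ll N^{-1/100}$, carry an affine $L_I$ of slope $|c_I|\le M$ with $\sup_I|F-L_I|\le\tau\,r_{n+\ell}$, where $\ell\sim\log N$ and $\tau$ is a small fixed power of $N$ chosen so that $r_N+\tau r_{n+\ell}\le r_{n+\ell}$ while $r_{n+\ell}\ll N^{-1/100}r_n$. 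This is the step where the exponent $\tfrac1{100}$ and the bound $M=N^{1/100}$ get calibrated, and where one may have to pigeonhole over a short block of $\sim\log N$ consecutive scales rather than a single one.

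Then, for such a flat $I$, interchanging $\omega$ and $\omega'$ if necessary, suppose $\Lambda_I(x):=x(\omega_1\cdot\omega)+L_I(x)(\omega_2\cdot\omega)$ has $|\Lambda_I'|\ge c_0$. For $x\in A\cap I$ there is $|y|\le r_N$ with $x\omega_1+(F(x)+y)\omega_2\in E$; projecting onto $\omega$ and using $\sup_I|F-L_I|\le\tau r_{n+\ell}$ gives $\Lambda_I(x)\in\N_{r_N+\tau r_{n+\ell}}(E_\omega)\subset\N_{r_{n+\ell}}(E_\omega)$, so by injectivity of $\Lambda_I$ and the small-projection hypothesis \eqref{smallproj},
$$m(A\cap I)\ \le\ c_0^{-1}\,m\big(\N_{r_{n+\ell}}(E_\omega)\big)\ \le\ c_0^{-1}\,r_{n+\ell-1}\ \ll\ N^{-1/100}\,m(I).$$
Summing over the $\lesssim r_n^{-1}$ flat dyadic intervals of length $r_n$ meeting $[-1,1]$ contributes $\ll N^{-1/100}$ to $m(A)$, and the non-flat intervals contribute at most their total length $\ll N^{-1/100}$; hence $m(A)\lesssim N^{-1/100}$, as desired.

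I expect the main obstacle to be precisely the scale bookkeeping in the multiscale step: one needs a single scale $n$ (or short block of scales) at which $F$ is flat to within an error negligible against an intermediate scale $r_{n+\ell}$, with $r_{n+\ell}$ small enough relative to $r_n$ that the per-interval gain survives multiplication by the $\sim r_n^{-1}$ intervals, and with the exceptional non-flat set genuinely small --- all despite the $r_1,\dots,r_N$ being allowed to decrease at arbitrarily varying rates. Controlling that non-flat set is the delicate point: a crude bound by the total square-function mass only gives $\sim N^{1/25}$ and is useless, so one must exploit the scale-by-scale distribution of the square function, presumably through several further applications of Lemma~\ref{pigeon}, which is exactly what forces the final exponents to be so poor.
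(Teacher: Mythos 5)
Your overall architecture --- reduce to bounding $m(A)$, run a quantitative Lebesgue differentiation step on $A$ and a quantitative Rademacher step on $F$ by pigeonholing dyadic martingale differences, and then exploit that at least one of the $\omega$, $\omega'$ projections has a non-degenerate affine derivative --- is exactly the paper's skeleton. The geometric observation about $w_c \cdot \omega$, $w_c \cdot \omega'$ is correct and is what the paper uses. But there is a genuine gap at the ``quantitatively flat'' step, and it is not a calibration issue: it fails in principle for general scale sequences.

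You claim that pigeonholing produces a scale $n$ at which, for all but a set of intervals $I$ of length $r_n$ of total length $\ll N^{-1/100}$, one has an affine $L_I$ with $\sup_I|F - L_I| \le \tau r_{n+\ell}$. The square-function bound $\sum_n\|F'_{n+1}-F'_n\|_{L^2}^2 \lesssim M^2$, even after the most favourable pigeonholing, only yields $\int_I |F'_{n+\ell}-F'_n|^2 \le \delta\, m(I)$ for most $I$ with $\delta \gtrsim M^2/N = N^{-49/50}$; integrating the slope deviation across $I$ then gives a \emph{sup-norm} bound no better than $\sup_I |F_{n+\ell}-L_I| \lesssim r_n\sqrt{\delta}$, because the deviation accumulates linearly in the length of $I$. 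So what you actually control is $\sup_I|F - L_I| \lesssim r_n\sqrt{\delta} + M r_{n+\ell}$, with $\sqrt{\delta}$ bounded below by roughly $N^{-1/2}$. Your requirement $\sup_I|F-L_I| \le \tau r_{n+\ell}$ therefore forces $r_{n+\ell}/r_n \gtrsim \sqrt\delta/\tau$ --- but Theorem~\ref{twoproj-quant} only assumes $r_{m+1}\le\frac12 r_m$, so the ratio $r_{n+\ell}/r_n$ can be arbitrarily small. When the scales decay quickly (e.g. $r_{m+1}=r_m^2$), there is simply no scale $r_{n'+1}$ sitting near the true error level $r_n\sqrt\delta$ at which to apply \eqref{smallproj}: either you take $n'=n-1$ and lose the entire gain (since $r_{n-1}\ge r_n$), or you take $n'\ge n$ and $\N_{r_n\sqrt\delta}(E_\omega)$ is not contained in $\N_{r_{n'+1}}(E_\omega)$. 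The several further pigeonholings you gesture at cannot repair this, because the obstruction is the floor $\sqrt\delta\gtrsim M/\sqrt N$, not an unlucky choice of $n$.

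The paper circumvents exactly this by \emph{not} asking for global flatness of $F$ over $I$. Instead it works with the piecewise-linear $F_{n_1+N^{-9/100}N}$ directly, subdivides $I$ into small intervals $J_j$ at that fine scale, classifies them as good or bad according to whether the local slope is close to $c$, and then proves a combinatorial ``rising sun'' lemma (Lemma~\ref{risi}): each target cell $K_m$ in the projection is either reached by only $O(N^{1/100})$ of the $J_j$ (low multiplicity), or a proportion $\gtrsim N^{-1/100}$ of the $J_j$ reaching it are bad. This converts the $L^2$ slope control into a lower bound on the number of distinct cells $K_m$ hit --- hence on $m(\N_{r_{n_1}}(E_\omega))$ --- without ever needing $\sup_I|F-L_I|$ to be small relative to the ratio of scales. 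That counting lemma is the ingredient your proposal is missing, and replacing it by a sup-norm flatness estimate is the step that would fail.
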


\begin{remark} This theorem gives a non-trivial bound on the rectifiability constant $R_E(\eps,r,M)$ when $r=1$; it is a simple matter to rescale this theorem to cover more general values of $r$, but we will not do so here.  It is also a routine matter to deduce Proposition \ref{twoproj} from Theorem \ref{twoproj-quant} using Proposition \ref{unrec} and the continuity of Lebesgue measure with respect to monotone limits; we leave the details to the reader as an exercise.  As before, the exponents $100$ can be improved significantly, but we have chosen not to do so in order to clarify the structure of the argument.
\end{remark}

The proof of this theorem is a relatively straightforward finitisation of the argument used to prove Proposition \ref{twoproj}, and we give it in Sections \ref{qlt}-\ref{conctwo}.  This simple proof will also serve as a model for the more complicated argument of the same nature used to prove Theorem \ref{main}.

\subsection{Example: the product Cantor set}

To illustrate the above theorems, we return to the Cantor set $K \times K$ and its approximants $K_n \times K_n$ described in Example \ref{cantor}.  We first use Theorem \ref{twoproj-quant} to establish a rectifiability bound:

\begin{proposition}[Rectifiability bound for product Cantor set]\label{rector}  If $n \geq m > l \geq 0$ and $1 \leq M \leq c \log^{1/100}(m-l+1)$ for some sufficiently small absolute constant $c > 0$, then
$$ R_{K_n \times K_n}( 2^{-m}, 2^{-l}, M ) \lesssim \log^{-1/100}(m-l+1).$$
\end{proposition}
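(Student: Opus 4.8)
The plan is to deduce the proposition from the quantitative two projection theorem (Theorem~\ref{twoproj-quant}), applied not to $K_n\times K_n$ itself -- which is far too large relative to the probing scale $2^{-l}$ -- but, after a rescaling, to a single dyadic Cantor cell of size comparable to $2^{-l}$, which is an affine image of $K_{n'}\times K_{n'}$ with $n'=n-j_0$ and $4^{-j_0}\sim 2^{-l}$.

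First I would record the elementary scale- and translation-invariance $R_{\lambda E+v}(\eps,r,M)=R_E(\eps/\lambda,r/\lambda,M)$ of the rectifiability constant, together with its monotonicity (increasing in $\eps$, decreasing in $r$). Using these it suffices to bound, for a probing interval $J$ of length exactly $2^{-l}$ and an arbitrary $M$-Lipschitz graph, the normalised measure of the set of $x\in J$ with $x\omega_1+(F(x)+y)\omega_2\in K_n\times K_n$ for some $|y|\le 2^{-m}$, since an arbitrary interval of length $\ge 2^{-l}$ decomposes into such pieces. The decisive geometric observation is that over a length-$2^{-l}$ interval the image of an $M$-Lipschitz graph has diameter at most $2^{-l}\sqrt{1+M^2}\lesssim 2^{-l}M\lesssim 2^{-l}\log^{1/100}(m-l+1)$, so it meets only a bounded cluster of the (mutually $\gtrsim 4^{-j_0}$-separated) Cantor cells at the stage $j_0$ with $4^{-j_0}\sim 2^{-l}$. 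Translating and dilating one such cell by $4^{j_0}\sim 2^l$ turns it into $K_{n'}\times K_{n'}\subset[0,1]^2$, which a further translation places inside $B(0,1)$; under this map the error $2^{-m}$ becomes comparable to $2^{-(m-l)}$ and the probing scale becomes comparable to $1$.

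Next I would apply Theorem~\ref{twoproj-quant} to this translated copy of $K_{n'}\times K_{n'}$ with $\omega=(1,0)$ and $\omega'=(0,1)$, whose mutual angle and angle with $-\omega'$ both equal $\pi/2\sim 1$, and with $E_\omega=E_{\omega'}$ a translate of $K_{n'}$. The needed scales come from the standard estimate $m(\N_\rho(K_{n'}))\lesssim\max(\sqrt\rho,2^{-n'})$ for the $n'$-th Cantor generation; because $m\le n$, all dyadic scales we use lie in the range $\rho\gtrsim 4^{-n'}$ where this reads $\lesssim\sqrt\rho$, so \eqref{smallproj} reduces essentially to $r_{k+1}\lesssim r_k^2$. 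Writing $r_k=2^{-a_k}$ this is $a_{k+1}\ge 2a_k+O(1)$, and since we may start from $a_1=O(1)$ (the probing scale being $\sim 1$) and need only reach $a_N\le m-l$, we can fit $N\gtrsim\log_2(m-l+1)$ dyadic scales obeying \eqref{scalesep-2} and \eqref{smallproj}. Theorem~\ref{twoproj-quant} then yields a bound $\lesssim N^{-1/100}\lesssim\log^{-1/100}(m-l+1)$; the hypothesis $M\le c\log^{1/100}(m-l+1)$ with $c$ small ensures $M\le N^{1/100}$ as required there, and monotonicity of $R$ in the error (which is $\le r_N$ up to a bounded factor) closes the argument. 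The cases $m-l=O(1)$ (where the claimed bound is a fixed positive constant, so the trivial bound $R\le 1$ suffices) and $l=O(1)$ (where a bounded number of cell reductions -- each essentially lossless, there being only boundedly many cells at the relevant stages -- reduces to probing scale $\gtrsim 1$) are handled directly.

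The one genuinely technical point, which I expect to be the main obstacle, is this reduction to a single cell. The probing scale $2^{-l}$ and the cell size $\sim 2^{-l}M$ differ by a factor $\sim M$, so a crude union bound over the $\sim M$ cells met by the Lipschitz image would lose exactly the factor $M$ that Theorem~\ref{twoproj-quant} supplies. I would attack this by separating the cells met by the image into those on which the graph ``dwells'' -- spending at least a $\delta$-fraction of the probing interval near the cell -- and those it merely ``traverses'': the traversed cells contribute at most $\lesssim M\delta$ in total (there are $\lesssim M$ of them, each of measure $\le\delta\,2^{-l}$), while for the dwell cells the weighted-average inequality $\sum_k\frac{m(\mathrm{bad}\cap I_k)}{m(I_k)}\cdot\frac{m(I_k)}{2^{-l}}\le\max_k\frac{m(\mathrm{bad}\cap I_k)}{m(I_k)}$ reduces matters -- with no loss in the number of cells -- to the rectifiability constant of a single rescaled cell at probing scale $\gtrsim\delta$, which is then iterated back towards probing scale $\sim 1$. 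One chooses $\delta$ to be a small power of $\log^{-1}(m-l+1)$ to balance the two contributions, absorbing the bounded powers of $M$ and $\delta^{-1}$ that accumulate into the very generous exponents. Carrying out this iteration, together with the accompanying bookkeeping of the error and of the Cantor generation $n'$ (and checking that the scale count $N$ stays $\gtrsim\log(m-l+1)$ throughout), is where the real work lies.
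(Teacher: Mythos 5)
Your proposal follows the same route as the paper — reduce by self-similarity to a single Cantor cell at the probing scale and invoke Theorem~\ref{twoproj-quant} — but it expends far more effort on the reduction than the paper does.  The paper compresses the entire cell reduction into the single clause ``by rescaling we may normalise $l=0$'' and then applies Theorem~\ref{twoproj-quant} directly to $K_n\times K_n\subset[0,1]^2$ with scales $r_{k+1}\lesssim r_k^2$ running from $\sim 1$ down to $\sim 2^{-m}$, giving $N\sim\log m$.  You are right that this is not a completely innocuous normalisation: scaling by $2^l$ does \emph{not} carry $K_n\times K_n$ to a set inside $B(0,1)$ but rather to a union of $\sim 4^{l/2}$ unit Cantor cells, and over a unit probing interval a Lipschitz-$M$ graph can meet several of them stacked in the vertical direction.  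Where I think you overcomplicate matters is in your estimate of \emph{how many} cells are met and in the machinery used to control them.  Because the cell positions themselves follow the Cantor pattern (Minkowski dimension $1/2$), the number of stage-$l'$ cells reachable in a $y$-window of length $\sim M\cdot 2^{-l}$ is only $\lesssim M^{1/2}$, not $\lesssim M$ as a naive gap-crossing count would suggest; and since the $x$-preimages of distinct cells under the graph are automatically disjoint, a plain union bound already gives $R\lesssim M^{1/2}N^{-1/100}$ with no need for the dwell/traverse decomposition or the recursion back to probing scale $\sim 1$.  Under the hypothesis $M\leq cN^{1/100}$ this is $\lesssim c^{1/2}N^{-1/200}$, which is $\log^{-1/200}$ rather than the advertised $\log^{-1/100}$; but the paper explicitly disclaims tightness of these exponents, and the same degradation (or a slightly tighter hypothesis $M\leq c\log^{1/200}$) makes the union-bound argument close cleanly.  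So: your diagnosis of the hidden step is sound and is indeed a gap in the paper's five-line proof as literally written, but the dwell/traverse iteration — which you yourself flag as ``where the real work lies'' and leave unfinished — is more than is needed; the Cantor separation structure already supplies the saving you were trying to manufacture.
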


\begin{proof} By rescaling we may normalise $l=0$.  Set $E := K_n \times K_n$, and let $\omega = e_1$, $\omega' = e_2$ be the standard basis.  With the notation of Theorem \ref{twoproj-quant}, one easily verifies that
$$  m( \N_{r}(E_{\omega}) ), m( \N_{r}(E_{\omega'}) ) \sim r^{1/2}$$
for all $2^{-n} \leq r \leq 1$ (this is basically the assertion that $K$ is Minkowski dimension $1/2$).  We can thus create a sequence of scales
$$ 2^{-m} < r_N < r_{N-1} < \ldots < r_1 \leq 1 $$
obeying the hypotheses of Theorem \ref{twoproj-quant} provided that $N \leq c \log m$ for some sufficiently large $c$.  The claim now follows from Theorem \ref{twoproj-quant}.
\end{proof}

We can now obtain a bound somewhat weaker than \eqref{perro} (and substantially weaker than \eqref{nazzo}).

\begin{proposition}[Favard bound for product Cantor set]\label{favar}  If $n \geq 100$, then
$$ \Fav(K_n \times K_n) \lesssim (\log_* n)^{-1/100}.$$
\end{proposition}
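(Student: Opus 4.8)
The plan is to apply the quantitative Besicovitch theorem (Theorem \ref{main}) to $E = K_n\times K_n$ (translated so as to lie in $B(0,1)$), using the rectifiability estimate of Proposition \ref{rector} to verify the unrectifiability hypothesis along a carefully chosen sequence of widely separated scales, and to arrange that one can fit $N$ such scales with $N\sim\log_* n$. Since $\Fav$ is translation invariant and $L$ will be an absolute constant, the conclusion $\Fav(K_n\times K_n)\lesssim N^{-1/100}L\sim(\log_* n)^{-1/100}$ then follows.

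I would first record the elementary length bound: since $K_n$ is a union of $2^n$ intervals of length $4^{-n}$, the set $K_n\times K_n$ is covered by $4^j$ squares of side $4^{-j}$ for every $0\le j\le n$, so $\H^1_{r,r}(K_n\times K_n)\lesssim 1$ for all $4^{-n}\le r\le 1$. Hence the uniform length bound \eqref{length-bound} of Theorem \ref{main} holds with $L$ an absolute constant, at any single scale $r_{k,-}=r_{k,+}\in[4^{-n},1]$. (One wrinkle: $K_n\times K_n$ is two-dimensional, so $\S^1(K_n\times K_n)=\infty$ and the literal hypothesis $\S^1(E)\le L$ of Theorem \ref{main} fails. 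However, this hypothesis enters the proof of Theorem \ref{main} only through the restricted Hausdorff contents at the chosen scales, all of which we keep above $4^{-n}$, so the argument applies unchanged; alternatively one can run the proof of Theorem \ref{main} on $E=K\times K$, which does have finite $\S^1$ measure, truncated at its finest scale, and use $K_n\times K_n\subset\N_{\sqrt{2}\,4^{-n}}(K\times K)$.)

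Next I would build the scales. Write $r_{k,-}=r_{k,+}=2^{-d_k}$ with $0=d_1<d_2<\dots<d_N$, so that scale separation \eqref{scale-mono}, \eqref{scale-sep} is automatic; the unrectifiability hypothesis \eqref{re} then reads $R_{K_n\times K_n}(2^{-d_{k+1}},2^{-d_k},2^{d_k})\le N^{-1/100}$. By Proposition \ref{rector} (taking $l=d_k$, $m=d_{k+1}$, $M=2^{d_k}$) this holds provided $d_{k+1}\le n$ and the gap $d_{k+1}-d_k$ is at least $\exp(c_1 2^{100 d_k})$ and at least $\exp(c_2 N)$ for suitable absolute constants $c_1,c_2>0$: the first lower bound lets Proposition \ref{rector} accommodate the Lipschitz constant $M=1/r_{k,-}$ that Theorem \ref{main} demands at that scale, and the second makes the conclusion $\lesssim\log^{-1/100}(\text{gap})$ of Proposition \ref{rector} smaller than $N^{-1/100}$. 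I would therefore define $d_{k+1}$ recursively as $d_k$ plus the ceiling of this quantity, and take $N$ to be the largest index for which $d_N\le n$. The $2^{100 d_k}$ appearing in the exponent forces the $d_k$ to increase like a tower of exponentials of height $\sim k$ (and no faster than one of such height), so the largest admissible $N$ is $\sim\log_* n$. Theorem \ref{main} then yields $\Fav(K_n\times K_n)\lesssim N^{-1/100}L\lesssim(\log_* n)^{-1/100}$; for the finitely many tower-heights of $n$ for which this $N$ would be too small, the estimate is trivial since $\Fav(K_n\times K_n)\le\Fav(B(0,1))\lesssim 1$.

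The step I expect to be the main obstacle is the bookkeeping around this tower recursion: one must check that absorbing the Lipschitz constant $2^{d_k}$ at scale $r_{k,-}$ via Proposition \ref{rector} costs only one further exponential per scale — so that the $d_k$ stack as a genuine tower rather than growing faster — and, in particular, that the resulting $N$ really is $\gtrsim\log_* n$ and not, say, $\log\log n$; there is also a minor circularity, in that the stopping index $N$ appears in the gap required at each step, which must be resolved by a monotonicity argument. The formal discrepancy between the $\S^1(E)<\infty$ hypothesis of Theorem \ref{main} and the two-dimensionality of $K_n\times K_n$, noted above, also needs a sentence; the Minkowski-content estimate, the scale separation, and the final substitution are routine.
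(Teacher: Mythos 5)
Your overall strategy is the paper's: feed Proposition \ref{rector} into Theorem \ref{main} along a tower of scales with $d_{k+1}-d_k\gtrsim\exp(c_1 2^{100 d_k})$, so that $N\sim\log_* n$ scales fit above $4^{-n}$, and conclude $\Fav(K_n\times K_n)\lesssim N^{-1/100}$. The tower bookkeeping, the additional gap $\gtrsim e^{c_2N}$ needed to beat the implied constant in Proposition \ref{rector}, and the resolution of the circularity in $N$ are all in order; your double exponential $\exp(c_1 2^{100 d_k})$ is indeed what Proposition \ref{rector} forces when $M=2^{d_k}$, and is what the paper refers to as the double-exponential in \eqref{mojo}.

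The one genuine gap is your treatment of the hypothesis $\S^1(E)\le L$. You are right to flag that it fails for $E=K_n\times K_n$ (which has positive area), but neither of your proposed repairs works. The first is false: the proof of Theorem \ref{main} does not use this hypothesis only through the restricted contents at the chosen scales. It sets $\mu:=\S^1|_E$ and uses $\mu(E)\le L$ as a finite-mass hypothesis throughout --- in Lemma \ref{aproj}, in every pigeonholing step on $\mu\times\sigma$, and in the Hardy--Littlewood arguments of Sections \ref{highdens} and \ref{conclude-sec} --- so when $\S^1(E)=\infty$ the measure $\mu$ is not even a finite Radon measure and the argument collapses at the outset. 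The second repair (apply the theorem to $K\times K$ and use $K_n\times K_n\subset\N_{\sqrt2\,4^{-n}}(K\times K)$) would require a version of Theorem \ref{main} that bounds $\Fav(\N_\delta(E))$ rather than $\Fav(E)$, which the theorem as stated does not provide. The correct fix is a one-line trick: replace $K_n\times K_n$ by its topological boundary $E:=\partial(K_n\times K_n)$. A line meeting a compact set must meet its boundary, so $E$ and $K_n\times K_n$ have identical projections and hence identical Favard length; $E$ is contained in the union of the boundaries of the $4^n$ constituent squares of side $4^{-n}$, so $\S^1(E)\lesssim 1$ and $\S^1_{r,r}(E)\lesssim 1$ for all $4^{-n}\le r\le 1$; and $R_E\le R_{K_n\times K_n}$ by the monotonicity evident from Definition \ref{rec}, so Proposition \ref{rector} still supplies the unrectifiability hypothesis. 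With this substitution your argument goes through as written.
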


\begin{proof}  Let $E := \partial(K_n \times K_n)$ be the boundary of $K_n \times K_n$.  Observe that $K_n \times K_n$ and $E$ have identical projections and thus have identical Favard length.  Also observe that
$$ \S^1_{r, r}(E) \lesssim 1$$
for all $2^{-n} \leq r \leq 1$.  From this and Proposition \ref{rector}, we see that we can find a sequence of scales \eqref{scale-mono} obeying the properties needed for Theorem \ref{main}, with the $r_{j,+}=r_{j,-}=2^{m_j}$ equal to negative powers of $2$ with $0 \leq m_j \leq n$, provided that we can ensure
$$ m_{j+1} - m_j \gg N$$
and
\begin{equation}\label{mojo}
 m_{j+1} \gg 2^{C m_j^{100}}
 \end{equation}
for some large constant $C$.  This is possible as long as $N \leq c \log_* n$ for some sufficiently small $c$.  The claim now follows from Theorem \ref{main}.
\end{proof}

We remark that the bound Proposition \ref{rector} can be improved by using the machinery of $\beta$-numbers, as developed by Jones \cite{jones} (see also \cite{okikiolu}), although this does not end up significantly improving Proposition \ref{favar}.  As we will not use such improvements in this paper, we shall defer these results to the Appendix.

\subsection{Acknowledgements}

We thank Yuval Peres and Peter Jones for encouragement, to Peter Jones and Raanan Schul for pointing out the use of $\beta$-numbers to bound the rectifiability constants, and John Garnett for corrections.  The author is also indebted to the anonymous referee for many useful comments.  The author is supported by a grant from the Macarthur Foundation.

\subsection{Organisation of the paper}

The paper is organised as follows.  Sections \ref{qlt}-\ref{conctwo} are devoted to the proof of the quantitative analogue of the two projection theorem (Proposition \ref{twoproj}), namely Theorem \ref{twoproj-quant}.  We have chosen to cover this theorem first, ahead of the more difficult quantitative Besicovitch projection theorem (Theorem \ref{main}), as the arguments are somewhat simpler and thus serve as an introduction to the methods used to prove Theorem \ref{main}.

The proof of Theorem \ref{twoproj-quant} is modeled on the proof of Proposition \ref{twoproj}, but of course with all qualitative statements converted into quantitative ones.  An inspection of that proof reveals three major ingredients: the Lebesgue differentiation theorem (that establishes points of density inside a dense set), the Radamacher differentiation theorem (that establishes points of differentiability for a Lipschitz function), and then a simple observation that the image of a set under a Lipschitz function will have positive measure if there exists a point of density of that set for which the Lipschitz function is differentiable with non-zero derivative.  We will give quantitative analogues of these three facts in Sections \ref{qlt}, \ref{rad}, \ref{conctwo} respectively, thus concluding the proof of Theorem \ref{twoproj-quant}.

The remaining sections \ref{norm-sec}-\ref{conclude-sec} is devoted to the proof of Theorem \ref{main}.  The arguments broadly follow the standard proof of the projection theorem, as found for instance in \cite{mattila}, and we summarise it as follows.  In Section \ref{norm-sec} we dispose of the ``normal'' directions - the directions $\omega$ and the points $x \in E$ such that $E$ behaves somewhat like a subset of a Lipschitz graph in the direction $\omega$ near $x$.  Because of the unrectifiability of $E$, we expect the contribution of these directions to the Favard length to be small, and this is indeed what is shown in Section \ref{norm-sec}.

The remaining sections are devoted to the more difficult statement that the non-normal directions (in which $E$ has large pieces that are oriented somewhat along the direction $\omega$ near $x$) also contribute a negligible amount to the Favard length.  To do this one needs to divide the space of non-normal directions into further pieces.  In Section \ref{highmult} we dispose of the ``high multiplicity'' directions - those directions $\omega$ and points $x \in E$ such that the ray from $x$ in direction $\omega$ intersects $E$ a large number of times.  It turns out that the bounds on the length of $E$ will allow us to get a good bound on the contribution of this case to the Favard length.

We are now left with the directions whose multiplicity is bounded or zero.  It turns out that a scale pigeonholing argument allows one to dispose of the former (this is done in Section \ref{posmult}), and so it remains to handle the latter case (in which $E$ is behaving like a graph in the $\omega$ direction, albeit one with unbounded Lipschitz constant).  In Section \ref{highdens} we use the Hardy-Littlewood maximal inequality to eliminate the ``high-density strips'' of such graphs - portions of the graph which have large density inside thin rectangles oriented along $\omega$.  After eliminating such regions, it turns out that each point $x$ has only a few directions $\omega$ which are still contributing to the Favard length, giving a small net contribution in all; we detail this in Section \ref{conclude-sec}.

\section{A quantitative Lebesgue differentiation theorem}\label{qlt}

We now begin the proof of Theorem \ref{twoproj-quant}.  Let $E, \omega, \omega', N, r_1,\ldots,r_N$ be as in that Theorem.  From the trivial bound $R_E( r_N, 1, N^{1/100} ) \leq 1$ we obtain the claim for $N \lesssim 1$, so we may take $N$ to be large.  For minor notational reasons it is also convenient to assume (as we may) that $N^{1/100}$ is an integer.

Using rotation invariance and the hypothesis $E \subset B(0,1)$, it suffices to show that 
$$ m( \{ x \in [-1,1]: (x,F(x)+y) \in E \hbox{ for some } -r_N \leq y \leq r_N \} ) \lesssim N^{-1/100}$$
for all $F: \R \to \R$ with Lipschitz constant $\|F\|_{\Lip} \leq N^{1/100}$.  Let us thus fix $F$, and set
$$ A := \{ x \in [-1,1]: (x,F(x)+y) \in E \hbox{ for some } -r_N \leq y \leq r_N \}.$$
Note that $A$ is clearly a compact set; our task is to show that
\begin{equation}\label{ajob}
m(A) \lesssim N^{-1/100}.
\end{equation}
From definition of $A$ we see that
\begin{equation}\label{xfx}
 \{ (x,F(x)) \cdot \omega: x \in A \} \subset \N_{r_N}(E_\omega)
\end{equation}
and similarly with $\omega$ replaced by $\omega'$.

Inspecting the proof of Proposition \ref{twoproj}, we see that the next step should be some quantitative version of the Lebesgue differentiation theorem, which would establish plenty of points of density in $A$.  It turns out that the correct way to achieve this is to work with discretised versions of $A$.

For any $1 \leq n \leq N$, we partition $[-1,1]$ into dyadic intervals of length $r_n$ (the endpoints may overlap, but these have measure zero and will play no role).  Let $A_n$ be the union of all such dyadic intervals which have a non-empty intersection with $A$, thus
$$ A \subset A_N \subset \ldots \subset A_1 \subset [-1,1].$$
Applying Lemma \ref{pigeon} (and recalling that $N$ is large), we may thus find $0.1 N \leq n_0 \leq 0.9 N$ such that
the set
$$ \Delta A := A_{n_0 - N^{-3/100} N} \backslash A_{n_0 + N^{-3/100} N}$$
has small Lebesgue measure:
\begin{equation}\label{mdelta-small}
 m( \Delta A ) \lesssim N^{-3/100}.
\end{equation}
Roughly speaking, this will mean that most points in the coarse-scale set $A_{n_0 - N^{-3/100} N}$ are still points of density for $A_{n_0 + N^{-3/100} N}$ all the way down to the much finer scale $n_0+N^{-3/100} N$.  The set $A_{n_0 + N^{-3/100} N}$ is not quite as small as $A$ itself, so these are not points of density for $A$, but this set will nevertheless serve as a good enough proxy for $A$ for our arguments (which will take place at scales significantly coarser than $r_{n_0+N^{-3/100} N}$.

We now fix this value of $n_0$, and henceforth shall be working entirely in the range of scale indices
$$ n \in [n_0 - N^{-3/100} N, n_0 + N^{-3/100} N] \subset [1,N].$$

\section{A quantitative Radamacher differentiation theorem}\label{rad}

We continue to use Proposition \ref{twoproj} as a model for our argument.  This model suggests that the next step in the argument should involve some quantitative version of the Radamacher differentiation theorem.  This we do as follows.

Given any scale index $1 \leq n \leq N$, we can consider the dyadic grid $\{ j r_n \in [-1,1]: j \in \Z \}$ of points in $[-1,1]$, equally separated by $r_n$.  We then let $F_n:[-1,1] \to \R$ be the unique continuous, piecewise linear function which agrees with $F$ on this dyadic grid (thus $F_n(j r_n) = F(j r_n)$), and is linear on each dyadic interval $[j r_n, (j+1) r_n]$ inside $[-1,1]$.  Since $F$ has a Lipschitz norm of at most $N^{1/100}$, it is easy to see that $F_n$ does also, and furthermore we have the uniform approximation
\begin{equation}\label{ffn}
 \| F - F_n \|_{L^\infty([-1,1])} \lesssim N^{1/100} r_n.
\end{equation} 
Also, the derivative $F'_n$, which is defined almost everywhere, has an $L^2$ norm of $O(N^{1/100})$, thus
$$ 0 \leq \| F'_n \|_{L^2([-1,1])}^2 \lesssim N^{2/200}.$$
Also, one can readily check that the differences $F'_{n+1} - F'_n$ are all orthogonal to each other, and to $F'_1$.  Thus by Pythagoras' theorem, $\| F'_n \|_{L^2([-1,1])}^2$ is an increasing function of $n$.  Specialising to scale indices between $n_0 - N^{-3/100} N$ and $n_0 + N^{-3/100} N$, and using the pigeonhole principle, we can thus find a scale $n_0 - 0.9 N^{-3/100} N \leq n_1 \leq n_0 + 0.9 N^{-3/100} N$ such that
$$ \| F'_{n_1 + N^{-9/100} N} \|_{L^2([-1,1])}^2 - \| F'_{n_1 - N^{-9/100} N} \|_{L^2([-1,1])}^2 \lesssim N^{-4/100}.$$
In particular we see from Pythagoras' theorem that
\begin{equation}\label{pythag}
 \|F'_{n_1 + N^{-9/100} N} - F'_{n_1 - N^{-9/100} N} \|_{L^2([-1,1])} \lesssim N^{-2/100}.
 \end{equation}
Roughly speaking, this estimate asserts that $F$ is ``mostly differentiable'' between the coarse scale index of $n_1 - N^{-9/100}$ and the fine scale index of $n_1 + N^{-9/100}$, and will serve as our quantitative substitute for the Radamacher differentiation theorem.

We now fix this value of $n_1$, and henceforth shall be working entirely in the range of scale indices
$$ n \in [n_1 - N^{-9/100} N, n_1 + N^{-9/100} N] \subset [n_0 - N^{-3/100} N, n_0 + N^{-3/100} N] \subset [1,N].$$

\section{Conclusion of the argument}\label{conctwo}

The scale index $n_1$ established in the preceding section will now be a good place to conduct our analysis.

Let us suppose for contradiction that the conclusion of Theorem \ref{twoproj-quant} is false, then
$$ m(A) \gtrsim N^{-1/100}.$$
This implies that the set 
\begin{equation}\label{mono}
 m( A_{n_0 - N^{-3/100} N} ) \gtrsim N^{-1/100}.
\end{equation}
We now localise to a relatively large interval in $A_{n_0 - N^{-3/100} N}$ in which $A$ and $F$ are both well-behaved (this is the quantitative analogue of selecting a point $x_0$ in the proof of Proposition \ref{twoproj}).  From \eqref{mono}, \eqref{mdelta-small} we have
$$ \int_{A_{n_0 - N^{-3/100} N}} N^{2/100} 1_{\Delta A} \lesssim m( A_{n_0 - N^{-3/100} N} ).$$
Also, from \eqref{mono}, \eqref{pythag}
$$ \int_{A_{n_0 - N^{-3/100} N}} N^{3/100} |F'_{n_1 + N^{-9/100} N} - F'_{n_1 - N^{-9/100} N}|^2 \lesssim m( A_{n_0 - N^{-3/100} N} ).$$
We concatenate these two bounds together:
$$ \int_{A_{n_0 - N^{-3/100} N}} N^{2/100} 1_{\Delta A} + N^{3/100} |F'_{n_1 + N^{-9/100} N} - F'_{n_1 - N^{-9/100} N}|^2 \lesssim m( A_{n_0 - N^{-3/100} N} ).$$
The set $A_{n_0 - N^{-3/100} N}$ is the union of dyadic intervals of length $r_{n_0 - N^{-3/100} N}$, and hence also the union of dyadic intervals of length $r_{n_1 - N^{-9/100} N}$.  Thus by the pigeonhole principle, we can find a dyadic $I \subset A_{n_0 - N^{-3/100} N}$ of length $r_{n_1 - N^{-9/100} N}$, such that
$$ \int_{I} N^{2/100} 1_{\Delta A} + N^{3/100} |F'_{n_1 + N^{-9/100} N} - F'_{n_1 - N^{-9/100} N}|^2 \lesssim m(I).$$
We fix such an interval $I$.  From the above estimate we have
\begin{equation}\label{mad}
 m( \Delta A \cap I ) \lesssim N^{-2/100} m(I)
\end{equation}
and
\begin{equation}\label{flate}
 \int_I |F'_{n_1 + N^{-9/100} N} - F'_{n_1 - N^{-9/100} N}|^2 \lesssim N^{-3/100} m(I).
\end{equation}

Note that the function $F'_{n_1 - N^{-9/100} N}$ is constant on $I$, so let us write $c := F'_{n_1 - N^{-9/100} N}$ (this is the analogue of $F'(x_0)$ in the proof of Proposition \ref{twoproj}); note that $|c| \leq N^{1/100}$.  From the hypotheses on $\omega,\omega'$, we see that either
$$
|(1, c) \cdot \omega| \gtrsim 1
$$
or
$$ |(1, c) \cdot \omega'| \gtrsim 1.$$
Without loss of generality we may assume that the former holds.  By reversing the sign of $\omega$ if necessary, we shall assume that
\begin{equation}\label{c}
t := (1, c) \cdot \omega \gtrsim 1.
\end{equation}
In view of \eqref{flate}, \eqref{c}, we expect the function $x \mapsto (x,F(x)) \cdot \omega$ to be largely increasing on $I$ at scales $r_{n_1 + N^{-9/100} N}$ and above.  The plan is to combine this with \eqref{xfx} to get some good lower bounds on the size of some neighbourhood of $E_\omega$.

Let us subdivide $I$ into $S := r_{n_1 + N^{-9/100} N} / r_{n_1 - N^{-9/100} N}$ dyadic intervals $J_1,\ldots,J_S$ of length $r_{n_1 + N^{-9/100} N}$.  
We also divide $\R$ into disjoint intervals $(K_m)_{m \in \Z}$ of length $100 N^{1/100} r_{n_1 + N^{-9/100} N}$.  We then say that a dyadic interval $J_j$ of the former kind \emph{reaches} an interval $K_m$ of the latter kind if we have $(x,F_{n_1+N^{-9/100}N}(x)) \cdot \omega \in K_m$ for at least one $x \in J_j$.  Since $F_{n_1+N^{-9/100}N}$ has a Lipschitz constant of at most $N^{1/100}$, we see that each $J_j$ reaches either one or two intervals $K_m$.

The function $F'_{n_1+N^{-9/100}N}$ is constant on each interval $J_j$.  Let us say that an interval $J_j$ is \emph{good} if $|F'_{n_1+N^{-9/100}N} - c| < t/100$, and \emph{bad} otherwise, where $t$ is the quantity in \eqref{c}.  From \eqref{flate} and Chebyshev's inequality we see that at most $O( N^{-3/100} S )$ of the intervals $J_1,\ldots,J_S$ are bad.  

The key lemma is

\begin{figure}[tb]
\centerline{\psfig{figure=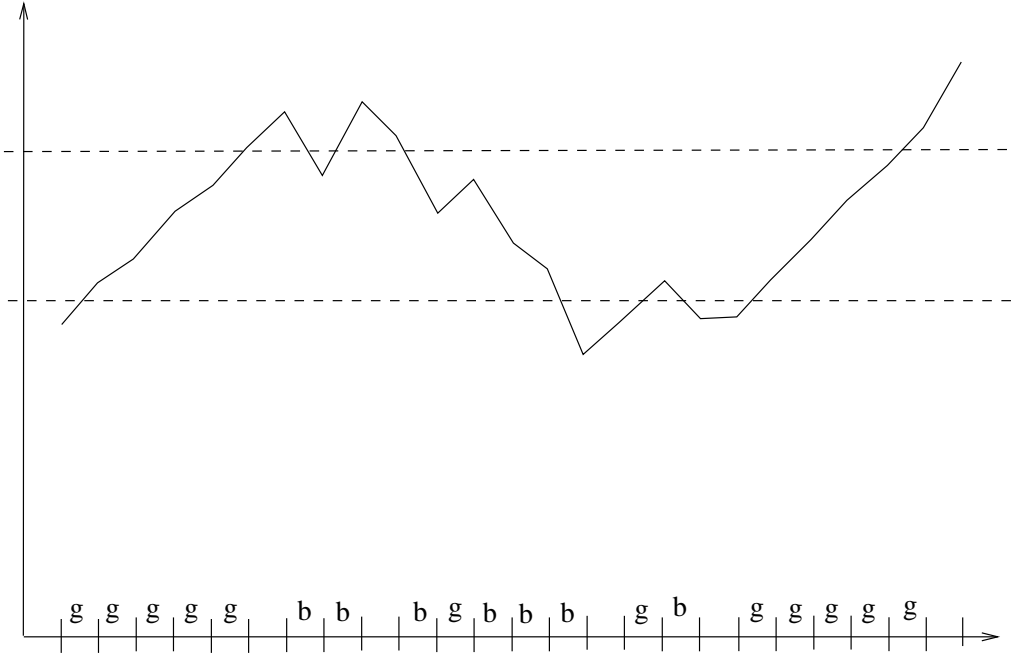}}
\caption{A graph of the function $x \mapsto (x,F_{n_1+N^{-9/100}N}(x)) \cdot \omega$, and the various intervals $J_j$ which reach a certain interval $K_m$.  The good intervals $J_j$ that reach $K_m$ are marked with ``g''; the bad ones are marked with ``b''.  The first and last strings are exceptional strings; the other two are not.}
\label{fig1}
\end{figure}

\begin{lemma}[Rising sun type lemma]\label{risi}  Let $m \in \Z$.  Then at least one of the following statements is true:
\begin{itemize}
\item[(i)] There are at most $O(N^{1/100})$ intervals $J_j$ which reach $K_m$.
\item[(ii)] Of all the intervals $J_j$ which reach $K_m$, the proportion of those $J_j$ which are bad is $\gtrsim N^{-1/100}$.
\end{itemize}
(See Figure \ref{fig1}.)
\end{lemma}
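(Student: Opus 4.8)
The plan is to recast everything in terms of the piecewise-affine function $g(x) := (x, F_{n_1+N^{-9/100}N}(x))\cdot\omega$ on $I$, and to exploit that its slope is positive and $\gtrsim 1$ on good intervals but only $O(N^{1/100})$ in magnitude on bad ones. Write $\omega=(\omega_1,\omega_2)$ and let $r := r_{n_1+N^{-9/100}N}$ be the common length of the $J_j$, so that each $K_m$ has length $100N^{1/100}r$. The slope of $g$ on $J_j$ is $\omega_1 + F'_{n_1+N^{-9/100}N}|_{J_j}\,\omega_2$; on a good $J_j$ this equals $t + O(t/100)$ by the definition of ``good'' and by \eqref{c}, hence lies in the interval $((99/100)t,(101/100)t)$, which is positive and $\gtrsim 1$; on a bad $J_j$ its magnitude is at most $1+N^{1/100}\lesssim N^{1/100}$; and $t\lesssim N^{1/100}$ throughout, since $|c|\le N^{1/100}$. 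In particular, on each \emph{good run} (a maximal block of consecutive indices $j$ with $J_j$ good) the function $g$ is strictly increasing, while each of its affine pieces varies by at most $O(N^{1/100}r)$, which is comfortably smaller than $|K_m|$.

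The first step is to observe that a single good run $Q$ contributes at most $O(N^{1/100})$ good intervals reaching a fixed $K_m=[a,b]$. By monotonicity of $g$ on $Q$, the intervals of $Q$ reaching $K_m$ form a consecutive sub-block $J_p,\dots,J_q$; since $g$ increases at rate $\gtrsim 1$ across $J_p\cup\dots\cup J_q$, while its total increase there is at most $|K_m|$ plus the variation of $g$ over $J_p$ and $J_q$, i.e.\ $O(N^{1/100}r)$, we conclude $q-p+1\lesssim N^{1/100}$. The second, and main, step is to bound the number $p$ of good runs reaching $K_m$ by $O(1+B_m)$, where $B_m$ denotes the number of \emph{bad} intervals $J_j$ reaching $K_m$. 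List the good runs reaching $K_m$ in left-to-right order as $Q^{(1)},\dots,Q^{(p)}$ and fix $i<p$. Since $g$ is increasing on $Q^{(i)}$ and $Q^{(i)}$ reaches $[a,b]$, the value of $g$ at the right endpoint of $Q^{(i)}$ is at least $a$. If it is at most $b$, then the bad interval immediately following the maximal good run $Q^{(i)}$ begins at a value in $[a,b]$, and so itself reaches $K_m$. If it exceeds $b$, then since $g$ must come back down into $[a,b]$ inside $Q^{(i+1)}$ — and since every good run lying strictly between $Q^{(i)}$ and $Q^{(i+1)}$ fails to reach $K_m$ and is therefore, by the intermediate value theorem, a run on which $g$ is everywhere $<a$ or everywhere $>b$ — the first interval after $Q^{(i)}$ whose $g$-range meets $[a,b]$ is one on which $g$ \emph{decreases}, hence is bad. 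In either case one produces a bad interval reaching $K_m$ lying strictly between $Q^{(i)}$ and $Q^{(i+1)}$; these are distinct as $i$ varies, so $p\le B_m+1$.

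Combining the two steps, the total number $T_m$ of intervals $J_j$ reaching $K_m$ equals the number of good ones, which is $\lesssim N^{1/100}p$, plus $B_m$, so $T_m\lesssim N^{1/100}(1+B_m)$. If $B_m=0$ this is exactly alternative (i). If $B_m\ge 1$, then $T_m\lesssim N^{1/100}B_m$, so the proportion $B_m/T_m$ of bad intervals among those reaching $K_m$ is $\gtrsim N^{-1/100}$, which is alternative (ii).

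I expect the main obstacle to be the book-keeping in the second step: verifying carefully that the good runs lying strictly between two consecutive good runs reaching $K_m$ really do lie entirely on one side of the band $K_m$ (here one uses the monotonicity of $g$ along a good run together with continuity at the junctions of its affine pieces), and that the decreasing interval extracted in the ``overshoot'' case lies \emph{between} $Q^{(i)}$ and $Q^{(i+1)}$ rather than inside $Q^{(i+1)}$. The boundary cases, in which $Q^{(i)}$ abuts an endpoint of $I$, cause no trouble, since then there is no $Q^{(i+1)}$ to consider; the slope estimates and the claim that each good run meets $K_m$ in a consecutive block of intervals are routine.
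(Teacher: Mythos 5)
Your proof is correct and follows essentially the same route as the paper's: the slope comparison shows each monotone good stretch can contribute only $O(N^{1/100})$ intervals to $K_m$, and an intermediate value argument places a bad interval reaching $K_m$ between consecutive upward crossings, so the number of crossings is at most one plus the number of bad intervals. The only (cosmetic) difference is that you organize the count by maximal good runs, whereas the paper organizes it by maximal strings of consecutive intervals reaching $K_m$ and singles out the all-good ``exceptional strings''; your bookkeeping in the overshoot case is if anything slightly more careful than the paper's.
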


\begin{proof}  Observe that on any good interval $J_j$, the function $x \mapsto (x,F_{n_1+N^{-9/100}N}(x)) \cdot \omega$ is a linear function whose slope is positive and comparable to $1$, by \eqref{c}.  On a bad interval, this function is still linear, but the slope can have either sign and has magnitude $O( N^{1/100} )$.  On the other hand, an interval $K_m$ only has length $100 N^{1/100}$ times the length of any of the $J_j$.  As a consequence, we see that any consecutive string $J_j, J_{j+1},\ldots,J_{j+s}$ of good intervals which reach $K_m$ can have length at most $s = O(N^{1/100})$.

Now consider a maximal string $J_j, J_{j+1},\ldots,J_{j+s}$ of consecutive intervals which reach $K_m$. If this string has length much larger than $N^{1/100}$, the above discussion shows that at least $\gtrsim N^{-1/100}$ of the intervals in this string must be bad.  If the string has length $O( N^{1/100} )$ and there is at least one bad interval, then clearly $\gtrsim N^{-1/100}$ of the intervals in this string are bad.  The only remaining case is if the string has length $O( N^{1/100} )$ and consists entirely of good intervals; let us call these the ``exceptional strings''.  Then we see that any exceptional string, the function
$x \mapsto (x,F_{n_1+N^{-9/100}N}(x)) \cdot \omega$ ascends monotonically from below $K_m$ to above $K_m$.  Thus, by the intermediate value theorem, between any two exceptional strings there must be at least one bad interval that touches $K_m$; thus the number of exceptional strings cannot exceed one plus the number of bad intervals that touch $K_m$.  On the other hand, on all the non-exceptional strings we have seen that $\gtrsim N^{-1/100}$ of the intervals are bad.  Since all exceptional strings have length $O(N^{-1/100})$, the claim follows.  
\end{proof}

Call an interval $K_m$ \emph{low multiplicity} if case (i) of Lemma \ref{risi} holds (with a suitable choice of implied constant), and \emph{high multiplicity} otherwise.  We call an interval $J_j$ \emph{typical} if it only reaches low multiplicity intervals $K_m$, and \emph{atypical} if it reaches at least one high multiplicity interval $K_m$.  Observe that of all the atypical intervals $J_j$ that reach any given high multiplicity interval $K_m$, at least $\gtrsim N^{-1/100}$ of them will be bad.  Since every interval $J_j$ reaches either one or two intervals $K_m$, we thus see that $\gtrsim N^{-1/100}$ of the atypical intervals $J_j$ are bad.  Since the number of bad intervals is $O( N^{-3/100} S )$, we conclude that $O(N^{-2/100} S)$ of the intervals $J_1,\ldots,J_S$ are atypical.

Let us say that an interval $J_j$ is \emph{$A$-empty} if $A \cap J_j = \emptyset$, and \emph{$A$-nonempty} otherwise.  Note that if $J_j$ is $A$-empty, then it is completely contained in $\Delta A$, thus from \eqref{mad} we see that at most $O( N^{-2/100} S )$ of the intervals $J_1,\ldots,J_S$ are $A$-empty.  Combining this with the previous analysis, we see that all but $O( N^{-2/100} S)$ of the intervals $J_1,\ldots,J_S$ are both typical and $A$-nonempty; in partiuclar the number of typical $A$-nonempty intervals is $\sim S$.  For any such typical $A$-nonempty interval $J_j$, we thus have an element $x_j \in A \cap J_j$, and the number $(x_j,F_{n_1 + N^{-9/100}N}(x_j)) \cdot \omega$ will lie in a low-multiplicity interval $K_m$.  Since the $K_m$ are low-multiplicity, a simple counting argument then shows that the number of such intervals $K_m$ obtained in this manner must be at least $\gtrsim N^{-1/100} S$.  By \eqref{xfx} and \eqref{ffn}, any such interval $K_m$ will be contained in $\N_{r_{n_1}}( E_\omega )$.  Thus we see that
$$\N_{r_{n_1}}( E_\omega ) \gtrsim N^{-1/100} S 100 N^{1/100} r_{n_1 + N^{-9/100} N} \gtrsim r_{n_1 - N^{-9/100} N}.$$
But this contradicts \eqref{smallproj} if $N$ is large enough.  This establishes Theorem \ref{twoproj-quant}.

\section{Reduction to a normal component}\label{norm-sec}

We now begin the proof of Theorem \ref{main}.  The main idea is to split $E \times S^1$ into a ``non-normal'' region, which roughly speaking behaves somewhat like a Lipschitz graph and so can be controlled by the unrectifiability hypothesis, and a ``normal'' region which will be dealt with by variants of the Lebesgue differentiation theorem.  In this section we handle the non-normal component, leaving the normal component for later.

Let $\mu$ be 1-dimensional spherical measure $\S^1$ restricted to $E$, thus $\mu$ is supported on $E$ and $\mu(E) \leq L$.  We let $\mu \times \sigma$ be the product measure on $E \times S^1$, thus
\begin{equation}\label{muse}
\mu \times \sigma( E \times S^1) \leq L.
\end{equation}

The following basic lemma will be useful for eliminating several error terms:

\begin{lemma}\label{aproj} For any $\mu \times \sigma$-measurable $A \subset E \times S^1$ we have
$$ \Fav(A) \lesssim \mu \times \sigma(A).$$
\end{lemma}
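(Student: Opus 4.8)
The plan is to reduce the bound on $\Fav(A)$ to the definition of Favard length and then invoke a standard estimate for the size of a projection in terms of the measure $\mu$. First I would recall that, by definition,
$$\Fav(A) = m \times \sigma( \{ (c,\omega) \in \R \times S^1: (l_{c,\omega} \times \{\omega\}) \cap A \neq \emptyset \} ),$$
so it suffices to bound, for each fixed $\omega \in S^1$, the one-dimensional Lebesgue measure of the set $P_\omega := \{ c \in \R: l_{c,\omega} \cap A_\omega \neq \emptyset \}$, where $A_\omega := \{ x \in E : (x,\omega) \in A \}$, and then integrate over $\omega$ using $\sigma(S^1)=1$. Now $P_\omega$ is precisely the orthogonal projection $\pi_\omega(A_\omega)$ of $A_\omega$ onto the line $\R\omega$, i.e. $P_\omega = \{ x \cdot \omega : x \in A_\omega \}$, which has diameter $O(1)$ since $A_\omega \subset E \subset B(0,1)$.

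The key point is the elementary fact that an orthogonal projection does not increase $\S^1$-content: $\S^1(\pi_\omega(A_\omega)) \leq \S^1(A_\omega)$, because any cover of $A_\omega$ by balls of diameter $d_i$ projects to a cover of $\pi_\omega(A_\omega)$ by intervals of length $\le d_i$ (and one may further cover each such interval by balls of the required radius range, or simply pass to the limit $r_+ \to 0$ in the definition). Since $\pi_\omega(A_\omega) \subset \R$ is a subset of the line, its one-dimensional Lebesgue measure is comparable to its $\H^1$-measure, hence (since $\S^1$ and $\H^1$ are comparable, as recalled after the spherical measure definition) to its $\S^1$-measure. Therefore
$$m(P_\omega) = m(\pi_\omega(A_\omega)) \lesssim \S^1(\pi_\omega(A_\omega)) \lesssim \S^1(A_\omega) = \mu(A_\omega),$$
the last equality because $\mu = d\S^1|_E$ and $A_\omega \subset E$ is $\mu$-measurable (this uses that $A$ is $\mu\times\sigma$-measurable, so $A_\omega$ is $\mu$-measurable for $\sigma$-a.e.\ $\omega$, by Fubini).

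Integrating over $\omega$ and applying Fubini's theorem for the product measure $\mu \times \sigma$,
$$\Fav(A) = \int_{S^1} m(P_\omega) \, d\sigma(\omega) \lesssim \int_{S^1} \mu(A_\omega)\, d\sigma(\omega) = \mu \times \sigma(A),$$
which is the claim. The only mildly delicate points are the measurability bookkeeping (ensuring $\omega \mapsto m(P_\omega)$ is measurable, which follows from the measurability of $A$ and standard Fubini-type arguments, and ensuring $A_\omega$ is $\mu$-measurable for a.e.\ $\omega$) and the comparison between Lebesgue measure on a line and $\S^1$/$\H^1$ restricted to that line; neither is a genuine obstacle, since the comparability of $\S^1$ and $\H^1$ and the projection-non-expansiveness of Hausdorff content are both recalled or standard (see \cite{mattila}). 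I expect the write-up to be short, with the projection-does-not-increase-content step being the only substantive ingredient.
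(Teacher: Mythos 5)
Your proof is correct and follows essentially the same route as the paper's: Fubini to reduce to a fixed direction $\omega$, the observation that the projection $x \mapsto x\cdot\omega$ is a contraction and hence does not increase one-dimensional spherical content, and the identification of Lebesgue measure on the line with $\H^1 \sim \S^1$. The paper's version is a three-line sketch of exactly this argument; yours just spells out the measurability bookkeeping.
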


\begin{proof}  By Fubini's theorem, it suffices to show that $m( \{ x \cdot \omega: (x,\omega) \in A \} ) \lesssim \mu( x: (x,\omega) \in A \}$ for almost every $\omega \in S^1$.  But the map $x \mapsto x \cdot \omega$ is a contraction.  Since $\mu$ is the restriction of $1$-dimensional spherical measure, the claim follows.
\end{proof}

Lemma \ref{aproj} already yields the claim when $N$ is bounded, so without loss of generality we can assume $N$ to be large, say $N \geq 10^{100}$.  For similar reasons we can also take $N^{1/100}$ to be an integer.  

Given any $(x,\omega) \in E \times S^1$, $r > 0$, and $M > 0$, we define the (double) sector
$$ X_{(x,\omega)}(r,M) := \{ y \in \R^2: |y-x| < r; |(y-x) \cdot \omega| \leq \frac{1}{M} |y-x| \}$$
(see Figure \ref{fig2}.)

\begin{figure}[tb]
\centerline{\psfig{figure=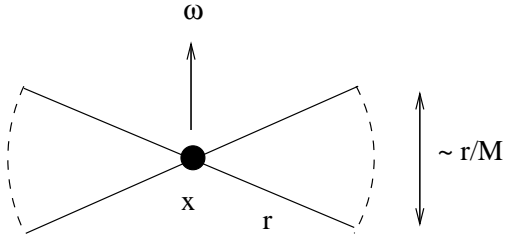}}
\caption{A sector $X_{(x,\omega)}(r,M)$.}
\label{fig2}
\end{figure}

Note that this sector has a ``thickness'' comparable to $r/M$ in the $\omega$ direction, and so we expect the measure $\mu(X_{x,\omega}(r,M))$ to exceed this quantity when $\omega$ is somehow ``normal'' to $E$.  Let us formalise this with a definition:

\begin{definition}[Normal direction]\label{norm-def}  Let $100 < n \leq N-100$ and let $M > 10^5$.  We say that a pair $(x,\omega) \in E \times S^1$ is \emph{normal at scale $n$} with Lipschitz constant $M$ if we have
\begin{equation}\label{muxri}
 \mu( X_{x,\omega}(r, M/10^4) \backslash X_{x,\omega}(r_{n+100,-}, M/10^4) ) > N^{-1/100} r/M
\end{equation}
for at least one $r_{n+100,-} \leq r \leq r_{n-100,+}$.  We let $\Norm_{n,M} \subset E \times S^1$ denote the set of all pairs which are normal at scale $n$ and Lipschitz constant $M$.  (See Figure \ref{fig3}.)
\end{definition}

\begin{figure}[tb]
\centerline{\psfig{figure=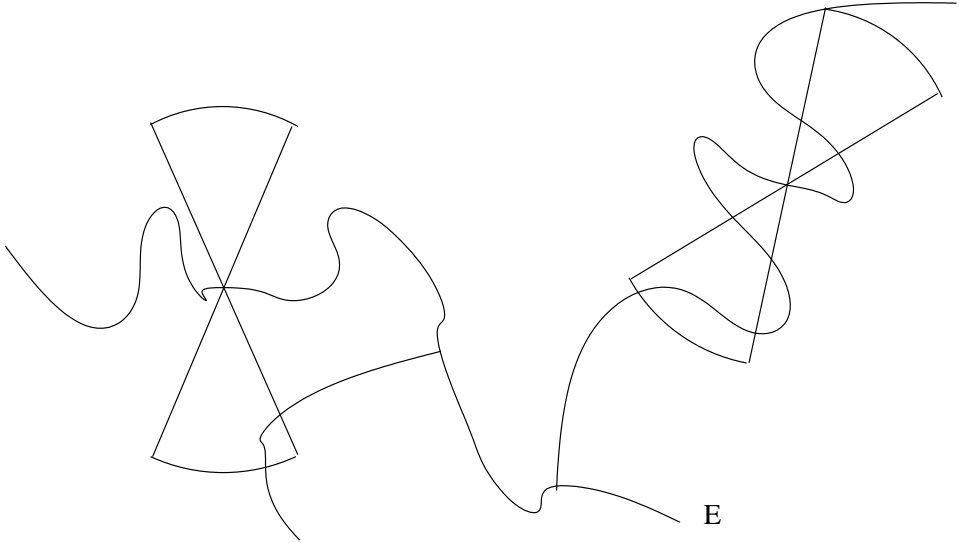}}
\caption{The sectors associated to a non-normal pair (left) and a normal pair (right).  In this figure $E$ is depicted as connected, but this is of course not the case in general; for instance one could imagine replacing $E$ here by a moderately dense subset.}
\label{fig3}
\end{figure}

It is easy to verify that $\Norm_{n,M}$ is measurable with respect to $\mu \times \sigma$. 
In the next section we shall establish the following somewhat technical result.

\begin{proposition}[Normal directions negligible]\label{tan} Let the notation and hypotheses be as in Theorem \ref{main}.  There exists a scale index $100 < n \leq N-100$ and a Lipschitz constant $M \leq 1/r_{n-100,-}$ such that
$$ \Fav( \Norm_{n,M}(E) ) \lesssim N^{-1/100} L.$$
\end{proposition}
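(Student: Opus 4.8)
The plan is to bound the total $\mu\times\sigma$-measure of the normal set $\Norm_{n,M}$, summed (or rather pigeonholed) over scales, using a Vitali-type covering argument driven by the unrectifiability hypothesis \eqref{re}. The point is that if $(x,\omega)$ is normal at scale $n$, then by Definition \ref{norm-def} there is an annular sector $X_{x,\omega}(r,M/10^4)\setminus X_{x,\omega}(r_{n+100,-},M/10^4)$ carrying $\mu$-mass at least $N^{-1/100}r/M$; the thickness of this sector in the $\omega$ direction is $O(r/M)$, so it is contained in a $O(r/M)$-neighbourhood of a line through $x$ in direction $\omega^\perp$ — i.e., a slab that locally looks like a strip around a Lipschitz graph of slope $O(M)$. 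A uniform length bound \eqref{length-bound} says $\mu$ of such a slab intersected with $E$ cannot be much larger than its length unless the set is genuinely rectifiable-looking there, and the rectifiability constant $R_E(r_{n+100,+},r_{n-100,-},1/r_{n-100,-})\le N^{-1/100}$ forbids $E$ from filling such a slab at density close to $1$ along an interval of length $\ge r_{n-100,-}$. So each normal pair $(x,\omega)$ witnesses a "wasteful" slab, and we want to conclude that the set of such pairs is small.

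Concretely, first I would fix a generic scale index $n$ via the pigeonhole Lemma \ref{pigeon} (applied to a suitable nested family of sets built from the normal pairs at the various scales $r_{n,-},r_{n,+}$), so that we only lose a power of $N^{-1/100}$ in the number of available scales; this selects the $n$ appearing in the statement, and we take $M\le 1/r_{n-100,-}$ as the largest allowed Lipschitz constant (discretising $M$ dyadically costs only a $\log$ which is absorbed into the exponent loss, given that we have exaggerated the constant $100$). Next, for this fixed $n$ and $M$, I would run a Vitali covering argument on $\Norm_{n,M}$: cover (a density point / most of) the relevant part of $E$ by the sectors $X_{x,\omega}(r,M/10^4)$ furnished by normality, extract a disjoint subcollection of comparable sectors at a fixed dyadic radius $r$ (again pigeonholing in $r$, which ranges over the $O(1)$-many scales between $r_{n+100,-}$ and $r_{n-100,+}$ — wait, these are separated by a factor $2$, so there are $\sim\log(r_{n-100,+}/r_{n+100,-})$ of them, but the scale-separation hypothesis \eqref{scale-sep} keeps this under control, and one more pigeonhole fixes a single dyadic $r$). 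Each selected sector projects, under the map to a suitable Lipschitz graph associated to direction $\omega^\perp$, to an interval of length $\sim r$; the normality inequality \eqref{muxri} forces $\mu$ to be $\gtrsim N^{-1/100} r/M$ there, whereas \eqref{re} combined with \eqref{length-bound} forces the "portion of $E$ lying in a slab of width $\sim r/M$ over that interval" to have measure $\lesssim N^{-1/100}$ times the length — this mismatch is the source of the gain. Summing the disjoint pieces and using $\mu(E)\le L$ gives $\mu\times\sigma(\Norm_{n,M})\lesssim N^{-1/100}L$, and then Lemma \ref{aproj} converts this to $\Fav(\Norm_{n,M}(E))\lesssim N^{-1/100}L$.

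The main obstacle I expect is matching the \emph{geometric} notion of "normal sector" with the \emph{graphical} notion in Definition \ref{rec}: a normal pair only tells us $E$ is concentrated in a thin double sector near $x$, not that $E$ is literally a graph over an interval of prescribed length $\ge r_{n-100,-}$. Bridging this requires (a) passing from the double sector to an honest slab of the right width — handled by the angular aperture $1/(M/10^4)$ being small — and (b) a connectedness / covering argument to assemble many small sectors at nearby points $x$ into a single interval of length $\ge r_{n-100,-}$ over which $E$ has density $\gtrsim N^{-1/100}$, contradicting \eqref{re}; this is where one has to be careful that the disjointified sectors, when projected, actually tile (most of) an interval at the coarse scale rather than scattering, and it is essentially the reason the definition of normal uses the \emph{annular} sector $X(r)\setminus X(r_{n+100,-})$ (so that finer-scale behaviour is quarantined) and the reason we insisted on the scale gap of $200$ in \eqref{muxri}. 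A secondary technical nuisance is the dependence of the implied Lipschitz graph's parametrising function $F$ on the selected sector; one wants a single $F$ (or finitely many) per direction $\omega$, which is arranged by quantising $\omega$ to a net of $S^1$ of mesh $\sim 1/M$ and using that $R_E$ already supremises over all $F$ with the given Lipschitz bound.
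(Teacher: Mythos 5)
Your proposal rests on a step that does not hold: you want to play the normality lower bound $\mu(X_{x,\omega}(r,M/10^4)\setminus\cdots) > N^{-1/100} r/M$ off against the unrectifiability hypothesis \eqref{re}, asserting that \eqref{re} together with \eqref{length-bound} ``forces the portion of $E$ lying in a slab of width $\sim r/M$ over that interval to have measure $\lesssim N^{-1/100}$ times the length.'' But the rectifiability constant of Definition \ref{rec} bounds the \emph{Lebesgue measure of the projection} of $E\cap(\hbox{graph neighbourhood})$ onto the base direction, not the $\mu$-mass of $E$ inside the slab. These two quantities are incomparable: $E$ can carry $\mu$-mass as large as $\sim r$ inside the sector while its shadow on the $\omega^\perp$-axis has measure close to zero --- for instance if $E\cap X_{x,\omega}$ contains many points of a single line through $x$ in direction $\omega^\perp$ (high multiplicity), or if $\mu$ concentrates in a strip $\{y: y\cdot\omega\in J\}$ of width $\ll r/M$ (high density). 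In either scenario $(x,\omega)$ is normal, \eqref{re} is perfectly consistent, and the ``mismatch'' you hope to exploit disappears. This is not a technical nuisance to be patched by a Vitali argument or by assembling sectors into long intervals; it is the central obstruction, and it is why the paper spends three sections eliminating precisely these configurations before touching the normal set.

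In fact the paper's proof of Proposition \ref{tan} uses \eqref{re} nowhere: unrectifiability is reserved for the complementary, \emph{non}-normal component in Section \ref{norm-sec}, where the approximate Lipschitz property (Lemma \ref{alip}) makes Definition \ref{rec} directly applicable. For the normal component the paper instead (i) removes high-multiplicity lines $H_n$ using only \eqref{length-bound} plus a Fubini/counting argument over a disjoint family of covering balls (Lemma \ref{hnfn}); (ii) removes positive-multiplicity annuli $P_n$ by pigeonholing over scales; (iii) removes high-density strips $D_n$ via the Hardy--Littlewood maximal inequality (Lemma \ref{dnfn}); and then (iv) shows (Lemma \ref{om}) that any surviving normal pair $(x,\omega)$ is angularly surrounded by a proportion $\gtrsim N^{-2/100}$ of directions $\omega'$ lying in the small exceptional set $\Delta$, because a direction $\omega'$ avoiding $\Delta\cup H'\cup D'$ yields a ray through $x$ that misses $E$ entirely in the relevant annulus, so the normal mass must pile up over the few arcs meeting $\Delta$, and the absence of high-density strips caps the mass over each such arc. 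A maximal-function estimate on $S^1$ and Lemma \ref{aproj} then finish. Your outline would have to reconstruct essentially all of (i)--(iii) before any comparison with \eqref{re} could be made meaningful, at which point \eqref{re} is no longer the tool that closes the argument for this component.
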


Let us assume Proposition \ref{tan} for now, and see how it implies Theorem \ref{main}.  If we let $G$ be the space of ``good'' or ``non-normal'' directions
$$ G := (E \times S^1) \backslash \Norm_{n,M}$$
then by \eqref{fav-sub} it suffices to show that
$$ \Fav( G ) \lesssim N^{-1/100} L.$$
For each $\omega$, let $G_\omega \subset E$ denote the set
$$ G_\omega := \{ x: (x,\omega) \in G \};$$
by Fubini's theorem, it thus suffices to show that
\begin{equation}\label{gom}
 m( \{ x \cdot \omega: x \in G_\omega \} ) \lesssim N^{-1/100} L
\end{equation}
for all $\omega \in S^1$.

Fix $\omega$.  We first get rid of an exceptional set.  By \eqref{length-bound} we can find a finite collection $\B_n$ of open balls with radius between $r_{n,-}$ and $r_{n,+}$ which cover $E$, such that \begin{equation}\label{lb}
\sum_{B \in \B_n} r(B) \lesssim L.  
\end{equation}
If $B \in \B_n$ is such a ball, we say that an interval $J \subset \R$ is \emph{low-density relative to $B$ and $\omega$} if $|J| \leq r_{n,-}$ and
\begin{equation}\label{lowdens}
\mu( \{ x \in B \cap G_\omega: x \cdot \omega \in 5J \} ) \leq 10^{10} N^{-1/100} |J|,
\end{equation}
where $5J$ is the interval with the same centre as $J$ but five times the length.
We let $E_\omega$ denote the union of all the sets $\{ x \in B \cap G_\omega: x \cdot \omega \in J \}$ generated by balls $B \in \B_n$ and intervals $J$ which are low-density relative to $B$ and $\omega$.

\begin{lemma}  We have $\mu(E_\omega) \lesssim N^{-1/100} L$.
\end{lemma}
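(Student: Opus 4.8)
The plan is to bound $\mu(E_\omega)$ by a Vitali-type covering argument, exploiting the fact that $E_\omega$ is by construction a union of pieces sitting over ``low-density'' intervals $J$ on the projection line $\R \omega$, so that the total projected measure of $E_\omega$ is controlled, and then using Lemma \ref{aproj}-type reasoning (the projection is a contraction on $\mu$) to transfer this back to $\mu(E_\omega)$ itself. Concretely, the image $\{x \cdot \omega : x \in E_\omega\}$ lies in the union of the intervals $J$ appearing in the definition, so the main task is to extract from this (possibly huge, overlapping) family of intervals a subfamily with bounded overlap whose union still covers the projection, and on which we can sum the density bound \eqref{lowdens}.

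First I would invoke the Vitali covering lemma (in its finite or $5r$-covering form) on the family of intervals $J$ that are low-density relative to some $B \in \B_n$: from the collection of all such $J$ (whose union of $5J$'s covers the projection of $E_\omega$) we select a pairwise disjoint subcollection $J_1, J_2, \ldots$ such that the dilates $5J_i$ still cover $\{x\cdot\omega : x \in E_\omega\}$. For each selected $J_i$, pick a witnessing ball $B_i \in \B_n$; since $|J_i| \le r_{n,-} \le r(B_i)$ and $B_i$ has radius at most $r_{n,+}$, the portion of $E_\omega$ projecting into $5J_i$ and lying in $B_i$ has $\mu$-measure at most $10^{10} N^{-1/100}|J_i|$ by \eqref{lowdens}. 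Summing over $i$, and using that the $J_i$ are disjoint intervals all contained in a bounded region (as $E \subset B(0,1)$), would give $\sum_i |J_i| \lesssim \sum_{B \in \B_n} r(B) \lesssim L$ by \eqref{lb} — here one needs that each selected ball $B_i$ is ``charged'' only boundedly often, which follows because the disjoint $J_i$ charging a fixed $B$ all have length $\le r_{n,-}$ and must project into the interval $\{x\cdot\omega : x \in B\}$ of length $\lesssim r(B)$, so their total length is $\lesssim r(B)$. Hence $\mu(E_\omega) \le \sum_i \mu(\{x \in B_i \cap G_\omega : x\cdot\omega \in 5J_i\}) \lesssim N^{-1/100} \sum_{B \in \B_n} r(B) \lesssim N^{-1/100} L$.

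I expect the main obstacle to be the bookkeeping in the overlap estimate: one must verify carefully that summing \eqref{lowdens} over a Vitali-selected disjoint family does not overcount the $\mu$-mass of $E_\omega$ (a point of $E$ could in principle be counted through several balls $B$), and that the factor $5J$ versus $J$ in the definition interacts correctly with the $5r$-covering lemma. The clean way around this is to note that it suffices to bound the \emph{projected} Lebesgue measure $m(\{x\cdot\omega : x \in E_\omega\})$, since $\mu(E_\omega) \lesssim m(\{x\cdot\omega : x \in E_\omega\})$ would \emph{not} hold in general — rather, one should run the argument directly on $\mu$ as above, covering $E_\omega$ (not its projection) by the sets $\{x \in B_i \cap G_\omega : x \cdot \omega \in 5J_i\}$, which is legitimate precisely because every point of $E_\omega$ lies in at least one such set by the definition of $E_\omega$ together with the Vitali selection. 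With that framing the disjointness of the $J_i$ and the length bound \eqref{lb} close the estimate, and the constant $10^{10}$ in \eqref{lowdens} is exactly the slack absorbing the $5r$-covering loss.
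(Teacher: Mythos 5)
Your overall strategy (a Vitali selection among the low-density intervals, the bound \eqref{lowdens} on each selected piece, and disjointness of the selected intervals to control $\sum_i |J_i|$ via $\sum_{B} r(B) \lesssim L$) is the same as the paper's. But there is a genuine gap at exactly the point you flag and then dismiss: the covering claim
$$E_\omega \subseteq \bigcup_i \{x \in B_i \cap G_\omega : x\cdot\omega \in 5J_i\}$$
is false in general when the Vitali selection is performed globally over all pairs $(B,J)$. If $x \in E_\omega$, the definition hands you some pair $(B,J)$ with $x \in B\cap G_\omega$, $x\cdot\omega\in J$, and $J$ low-density relative to $B$; the Vitali selection gives $J \subseteq 5J_i$ for some selected $J_i$, hence $x\cdot\omega \in 5J_i$ --- but the witnessing ball $B_i$ attached to $J_i$ has nothing to do with $B$, and there is no reason that $x \in B_i$. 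The condition \eqref{lowdens} for the pair $(B_i,J_i)$ only controls the $\mu$-mass of $B_i \cap G_\omega$ over the strip $5J_i$, not the mass of $B \cap G_\omega$ (let alone of all of $E$) over that strip; a strip over a short interval can carry a great deal of $\mu$-mass --- that is precisely why the paper later has to treat high-density strips separately --- so you cannot enlarge $B_i$ to $E$ either. Your parenthetical justification (``every point of $E_\omega$ lies in at least one such set by the definition of $E_\omega$ together with the Vitali selection'') is therefore an assertion rather than a proof, and it is the step that fails.

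The repair is to localize before selecting, which is what the paper does: fix one ball $B \in \B_n$, restrict to the family $\mathcal{J}_B$ of intervals that are low-density relative to that particular $B$ (and meet the projection of $B$), and run the Vitali lemma inside $\mathcal{J}_B$ alone. Then the dilates $5J_k$ of the selected intervals cover the contribution of $B$ to $E_\omega$, every covering piece lives in the same ball $B$, so \eqref{lowdens} applies verbatim, and the disjoint $J_k$ of length at most $r_{n,-} \leq r(B)$ all meeting an interval of length $2r(B)$ satisfy $\sum_k |J_k| \lesssim r(B)$, giving a contribution $\lesssim N^{-1/100} r(B)$ from $B$. Summing over $B \in \B_n$ by subadditivity of $\mu$ (counting a point through several balls only hurts you in the right direction for an upper bound) and invoking \eqref{lb} yields $\mu(E_\omega) \lesssim N^{-1/100} L$.
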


\begin{proof}  By \eqref{length-bound} it suffices to show that
$$ \mu( \bigcup_{J \in {\mathcal J}_B}\{ x \in B \cap G_\omega: x \cdot \omega \in J \} ) \lesssim N^{-1/100} r(B)$$
for all $B \in \B_n$, where ${\mathcal J}_B$ is the collection of all the intervals $J$ which are low-density relative to $B$ and $\omega$.  By monotone convergence (and the separability of $\R$), it suffices to show this with ${\mathcal J}_B$ replaced by any finite subcollection of such intervals.  We can also clearly restrict attention to those $J$ which intersect $\{ x \cdot \omega^\perp: x \in B \}$. Using Wiener's Vitali-type covering lemma\footnote{This lemma asserts that given any finite collection of balls $B_1,\ldots,B_n$, one can find a subcollection $B_{i_1},\ldots,B_{i_K}$ of disjoint balls such that the dilates $5B_{i_1},\ldots,5B_{i_K}$ cover $\bigcup_{j=1}^n B_j$.}, we can then cover this subcollection by $5J_1,\ldots,5J_K$ for some disjoint $J_1,\ldots,J_K$ in ${\mathcal J}_B$ which intersect $\{ x \cdot \omega^\perp: x \in B \}$.  But by hypothesis \eqref{lowdens} we have
$$ \mu( \{ x \in B \cap G_\omega: x \cdot \omega \in 5J_k \} ) \lesssim N^{-1/100} |J_k|$$
for all $1 \leq k \leq K$.  Since the $J_k$ are disjoint, have length at most $|J_k| \leq r_{n,-} \leq r(B)$, and intersect $\{ x \cdot \omega: x \in B \}$, we see that $\sum_{k=1}^K |J_k| \lesssim r(B)$.  Summing the previous estimate in $k$ we obtain the claim.
\end{proof}

We now use rotational symmetry to normalise $\omega = e_1$.
Let $G'_{\omega} := G_{\omega} \backslash E_\omega$.
In view of the above lemma, we see that to show \eqref{gom} it suffices to show that
\begin{equation}\label{rect}
 m( \{ x_1 \in \R: (x_1,x_2) \in G'_{\omega} \hbox{ for some } x_2 \in \R \} ) \lesssim N^{-1/100} L.
 \end{equation}

We now make a key geometric observation (cf. \cite[Lemma 15.14]{mattila}), that $G'_{\omega}$ is behaving very much like a Lipschitz graph:

\begin{lemma}[Approximate Lipschitz property]\label{alip}  Let $(x_1,x_2), (y_1,y_2) \in G'_{\omega}$ be such that $|x_1 - y_1| + |x_2-y_2| \leq 10 r_{n-1,-}$.  Then $|x_2-y_2| \leq \frac{1}{10} M |x_1-y_1| + \frac{1}{10} r_{n+2,+}$.  (See Figure \ref{fig4}.)
\end{lemma}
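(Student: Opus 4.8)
The plan is to argue by contradiction. Suppose $|x_2-y_2| > \frac{1}{10}M|x_1-y_1| + \frac{1}{10}r_{n+2,+}$ and set $d := |x-y|$ (recall $\omega = e_1$). The hypothesis gives $d \le |x_1-y_1| + |x_2-y_2| \le 10 r_{n-1,-}$, while the contradiction assumption gives $d \ge |x_2-y_2| > \frac{1}{10}r_{n+2,+}$ and, on rearranging, $|x_1-y_1| < \frac{10}{M}|x_2-y_2| \le \frac{10}{M}d$, whence also $|x_2-y_2| \ge (1-\frac{10}{M})d \ge \frac12 d$. Thus $x$ and $y$ are almost aligned in the $\omega^\perp$ direction, and in fact the whole ball $B(y,\frac{d}{8})$ lies inside the thin double-cone $X_{x,\omega}(2d, M/10^4)$ while remaining outside the inner cone $X_{x,\omega}(r_{n+100,-}, M/10^4)$: the angular inequality follows since for $z\in B(y,\frac d8)$ we have $|z_1-x_1|\le|z_1-y_1|+|y_1-x_1| < \frac d8 + \frac{10}{M}d \ll \frac{10^4}{M}|z-x|$, using $|z-x|\ge\frac12 d-\frac d8\ge\frac14 d$, and the radial bounds $r_{n+100,-}\le|z-x|<2d\le r_{n-100,+}$ follow by iterating the scale separation \eqref{scale-sep} about a hundred times (which yields $r_{n+100,-}\ll r_{n+2,+}<10d$ and $2d\le 20 r_{n-1,-}\ll r_{n-100,+}$). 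In particular $2d$ is an admissible radius in Definition \ref{norm-def}.

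Since $x\in G'_\omega\subset G_\omega$, the pair $(x,\omega)$ is not normal at scale $n$ with constant $M$, so \eqref{muxri} fails at $r=2d$:
$$\mu\bigl(X_{x,\omega}(2d, M/10^4)\setminus X_{x,\omega}(r_{n+100,-}, M/10^4)\bigr) \le 2N^{-1/100}d/M.$$
On the other hand $y\in G'_\omega = G_\omega\setminus E_\omega$, and since $\B_n$ covers $E$ there is $B\in\B_n$ with $y\in B$; write $\rho := r(B)\in[r_{n,-},r_{n,+}]$. Put $\ell := \min(r_{n,-},\,d/M)$ and $J := [y_1-\frac{\ell}{2},\, y_1+\frac{\ell}{2}]$, so that $|J|=\ell\le r_{n,-}$ and $y_1\in J$; then $J$ is not low-density relative to $B$ and $\omega$ (otherwise $y$ would lie in $E_\omega$), so by \eqref{lowdens}
$$\mu\bigl(\{z\in B\cap G_\omega : z_1\in 5J\}\bigr) > 10^{10}N^{-1/100}\ell.$$

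The heart of the matter is to show that this last set is contained in $X_{x,\omega}(2d, M/10^4)\setminus X_{x,\omega}(r_{n+100,-}, M/10^4)$, at least in the principal regime $\rho\le d/8$: there, for $z\in B\cap G_\omega$ with $z_1\in 5J$ one has $|z_1-x_1|\le\frac52\ell+|y_1-x_1|\le\frac52\ell+\frac{10}{M}d$ and $|z-x|\ge|z_2-x_2|\ge|x_2-y_2|-2\rho\ge\frac14 d$, so $|z_1-x_1|\le\frac{10^4}{M}|z-x|$ by the choice of $\ell$, and $|z-x|$ again lies between $r_{n+100,-}$ and $2d$. Comparing the two displayed estimates then forces $10^{10}\ell\le 2d/M$, i.e. $\ell\le d/(5\cdot10^9 M)$, contradicting $\ell=\min(r_{n,-},d/M)$ whenever $d< 5\cdot10^9 M r_{n,-}$. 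This closes the argument in the main case.

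It remains to handle the cases excluded by the two restrictions above — namely $\rho>d/8$ (the covering ball through $y$ is large compared with $|x-y|$) and $d\ge 5\cdot10^9 M r_{n,-}$ — by a short case analysis on the sizes of $d$, $r_{n,-}$, $r_{n,+}$ relative to $M$. In each case one runs the same two ingredients: the non-normality of one of the endpoints $x,y$, applied at a radius comparable to $\max(d,r_{n,+})$ (still admissible by \eqref{scale-sep}), bounds the $\mu$-mass of a thin $\omega^\perp$-cone through that endpoint, while the failure of $y$ (or $x$) to lie in $E_\omega$ injects more than $10^{10}N^{-1/100}$ times the relevant length scale of $\mu$-mass into that cone. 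The one delicate point is precisely this bookkeeping: because a ball $B\in\B_n$ through a given point may have radius anywhere in $[r_{n,-},r_{n,+}]$ while the test intervals $J$ in \eqref{lowdens} have length at most $r_{n,-}$, one must check in each regime that the mass furnished by \eqref{lowdens} genuinely falls inside the cone $X_{x,\omega}(r,M/10^4)$ and outside its inner hole $X_{x,\omega}(r_{n+100,-},M/10^4)$, and keep the constants ($10^4$, $10^{10}$, and the factors of $10$) mutually consistent. I expect this last point — not any new idea — to be the main obstacle.
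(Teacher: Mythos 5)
Your main-case computation (with $\rho := r(B) \le d/8$ and $d < 5\cdot10^9 M r_{n,-}$) is correct, but the residual cases you flag are a genuine gap, not bookkeeping, and the reason is a missing idea rather than missing constants.

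The problem is that you only use the non-normality of $(x,\omega)$, i.e., you only bound $\mu$ on the single annular sector centered at $x$. The extra mass furnished by the low-density failure lives in the vertical strip $\{z : z_1 \in 5J\}$ intersected with the ball $B \ni y$. When $\rho > d/8$, points $z$ of that set can lie arbitrarily close to $x$ (indeed $x$ itself may be in $B$), and such $z$ fall inside the deleted inner ball $X_{x,\omega}(r_{n+100,-},M/10^4)$; they can also fall outside the thin cone when $|z_2-x_2|$ is small. So the inclusion into the $x$-sector fails, and you cannot rescue it by simply enlarging the radius. Switching wholesale to the sector at $y$ does not work either: $z \in B$ with $z_1$ near $y_1$ can be arbitrarily close to $y$, so $z$ may fall in the deleted inner ball of the $y$-sector. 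What saves the day is using \emph{both} non-normality hypotheses at once: the paper defines $Y_r$ as the \emph{union} of the annular sectors at $x$ and at $y$, and then proves the set inclusion via the triangle-inequality dichotomy ``either $|z_2-x_2| \ge \tfrac12|y_2-x_2|$ or $|z_2-y_2| \ge \tfrac12|y_2-x_2|$''. This guarantees that every relevant $z$ is $\omega$-far from at least one of $x,y$, hence outside that endpoint's inner ball and inside its thin cone. That two-sector dichotomy is the key structural ingredient you are missing, and it is what makes the argument go through uniformly over all values of $d$, $\rho$ and $M$.

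A smaller (and less serious) structural difference: you localize via the ball through $y$ and an interval $J$ centered at $y_1$, and invoke $y \notin E_\omega$; the paper instead localizes via the ball through $x$, takes $J$ centered at $\tfrac{x_1+y_1}{2}$ with half-width $10|x_2-y_2|/M$ (so that $[x_1,y_1] \subset J$), uses the auxiliary set $H := \{z \in G_\omega : |z_1-x_1|+|z_2-x_2| \le 10r_{n-1,-}\}$ to ensure $B\cap G_\omega \subset H$, and invokes $x \notin E_\omega$. This is mostly cosmetic, but you should adopt it (or something equivalent) once you bring in the two-sector $Y_r$, because the $J$ there is naturally tied to the pair $(x,y)$ rather than to $y$ alone. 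Once you incorporate the two-sector dichotomy and the paper's choice of $J$, the separate case $d \ge 5\cdot10^9 M r_{n,-}$ also dissolves into a single estimate and need not be treated separately.

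Finally, a stylistic point: the paper does not argue directly with the given pair $(x,y)$ but first passes to a near-extremal $y$ realizing the supremum $R := \sup\{|x_2-z_2| : z \in H\}$ (up to a factor $2$). This is a minor normalization, not a new idea, and your direct approach is fine on that point.
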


\begin{proof}  Fix $(x_1,x_2) \in G'_\omega$, and define the set
$$ H := \{ (y_1,y_2) \in G_\omega: |x_1 - y_1| + |x_2-y_2| \leq 10 r_{n-1,-} \}$$
and the quantity
$$ R := \sup \{ |x_2-y_2|: (y_1,y_2) \in H \}.$$
Clearly $R \leq 10 r_{n-1,-}$.  To establish the claim, it will suffice (since $G_\omega$ contains $G'_\omega$) to show that $R \leq \frac{1}{10} M |x_1-y_1| + \frac{1}{10} r_{n+2,+}$.  Suppose this is not the case, thus
\begin{equation}\label{yx2}
\frac{1}{10} M |x_1-y_1| + \frac{1}{10} r_{n+2,+} < R \leq 10 r_{n-1,-}.
\end{equation}
By definition of $R$, we can thus find $(y_1,y_2) \in H$ such that
\begin{equation}\label{r2}
 \frac{1}{2} R \leq |x_2-y_2| \leq R;
 \end{equation}
in particular, from \eqref{yx2} and \eqref{r2} we have
\begin{equation}\label{y3}
|x_2-y_2| \geq \frac{1}{20} M |x_1-y_1|, \frac{1}{20} r_{n+2,+}.
\end{equation}
Fix this $(y_1,y_2)$.  By the non-normality hypothesis, we have
$$ \mu( (X_{(x_1,x_2,\omega)}(r, M/10^4) \backslash X_{(x_1,x_2),\omega}(r_{n+100,-}, M/10^4)) \cap G_\omega ) \leq N^{-1/100} r/M $$
and
$$ \mu( (X_{(y_1,y_2,\omega)}(r, M/10^4) \backslash X_{(y_1,y_2),\omega}(r_{n+100,-}, M/10^4)) \cap G_\omega ) \leq N^{-1/100} r/M $$
for all $r_{n+100,-} \leq r \leq r_{n-100,+}$, and thus
\begin{equation}\label{con1}
 \mu( Y_r \cap G_\omega ) \leq 2 N^{-1/100} r/M
 \end{equation}
where
\begin{align*}
 Y_r &:= (X_{(x_1,x_2,\omega)}(r, M/10^4) \backslash X_{(x_1,x_2),\omega}(r_{n+100,-}, M/10^4)) \\
&\quad \cup (X_{(y_1,y_2,\omega)}(r, M/10^4) \backslash X_{(y_1,y_2),\omega}(r_{n+100,-}, M/10^4)).
\end{align*}
We apply this fact with $r := 1000 |y_2-x_2|$; note that \eqref{r2}, \eqref{yx2}, \eqref{scale-sep} clearly ensure that $r$ lies in the range $r_{n+100,-} \leq r \leq r_{n-100,+}$.  

Let $J$ be the interval $J := [\frac{x_1+y_1}{2} - \frac{10|x_2-y_2|}{M},\frac{x_1+y_1}{2} + \frac{10|x_2-y_2|}{M}]$.  From \eqref{y3} we see that $[x_1,y_1] \subseteq J$. We claim the set inclusion
\begin{equation}\label{con2}
\{ (z_1,z_2) \in H: z_1 \in 5J \} \subset Y_r.
\end{equation}
Indeed, if $(z_1,z_2) \in H$ is such that $z_1 \in 5J$, then 
$$|z_2-x_2| \leq R \leq 2 |x_2-y_2| \hbox{ and } |z_1-\frac{x_1+y_1}{2}| \leq \frac{50}{M} |x_2-y_2|,$$
and thus by the triangle inequality and \eqref{y3}
\begin{equation}\label{zoo}
|z_2-x_2|, |z_2-y_2| \leq 3 |x_2-y_2| \hbox{ and } |z_1-x_1|, |z_1-y_1| \leq \frac{60}{M} |x_2-y_2|.
\end{equation}
By the triangle inequality, we either have $|z_2 - x_2| \geq \frac{1}{2} |y_2-x_2|$ or $|z_2 - y_2| \geq \frac{1}{2} |y_2-x_2|$.  If $|z_2 - y_2| \geq \frac{1}{2} |y_2-x_2|$ we see from \eqref{y3}, \eqref{zoo} that
$$ |z_2 - y_2| \geq \frac{1}{2} |y_2-x_2| \geq \frac{1}{120} M |z_1-y_1|$$
which implies from choice of $r$ that 
$$(z_1,z_2) \in X_{(y_1,y_2,\omega)}(r,M/10^4) \backslash X_{(y_1,y_2),\omega}(r_{n+100,-}, M/10^4).$$
  Similarly, if $|z_2 - x_2| \geq \frac{1}{2} |y_2-x_2|$ then 
$$(z_1,z_2) \in X_{(x_1,x_2,\omega)}(r,M/10^4) \backslash X_{(x_1,x_2),\omega}(r_{n+100,-}, M/10^4).$$
In either case we obtain \eqref{con2}.

The point $(x_1,x_2)$ lies in $E$, and is thus contained in a ball $B$ in $\B_n$, whose radius $r(B)$ is at most $r_{n,+}$.  In particular we see from definition of $H$ (and \eqref{scale-sep}) that $B \cap G_\omega \subset H$.  Thus from \eqref{con2} and \eqref{con1} we have
$$ \mu( \{ (z_1,z_2) \in B \cap G_\omega: z_1 \in 5J \} ) \leq 10^5  N^{-1/100} |J| / M$$
(say).  Comparing this with \eqref{lowdens} we see that $J$ is low-density relative to $B$ and $\omega$, and thus $(x_1,x_2) \in E_\omega$, contradicting the hypothesis that $(x_1,x_2) \in G'_\omega$.
\end{proof}

\begin{figure}[tb]
\centerline{\psfig{figure=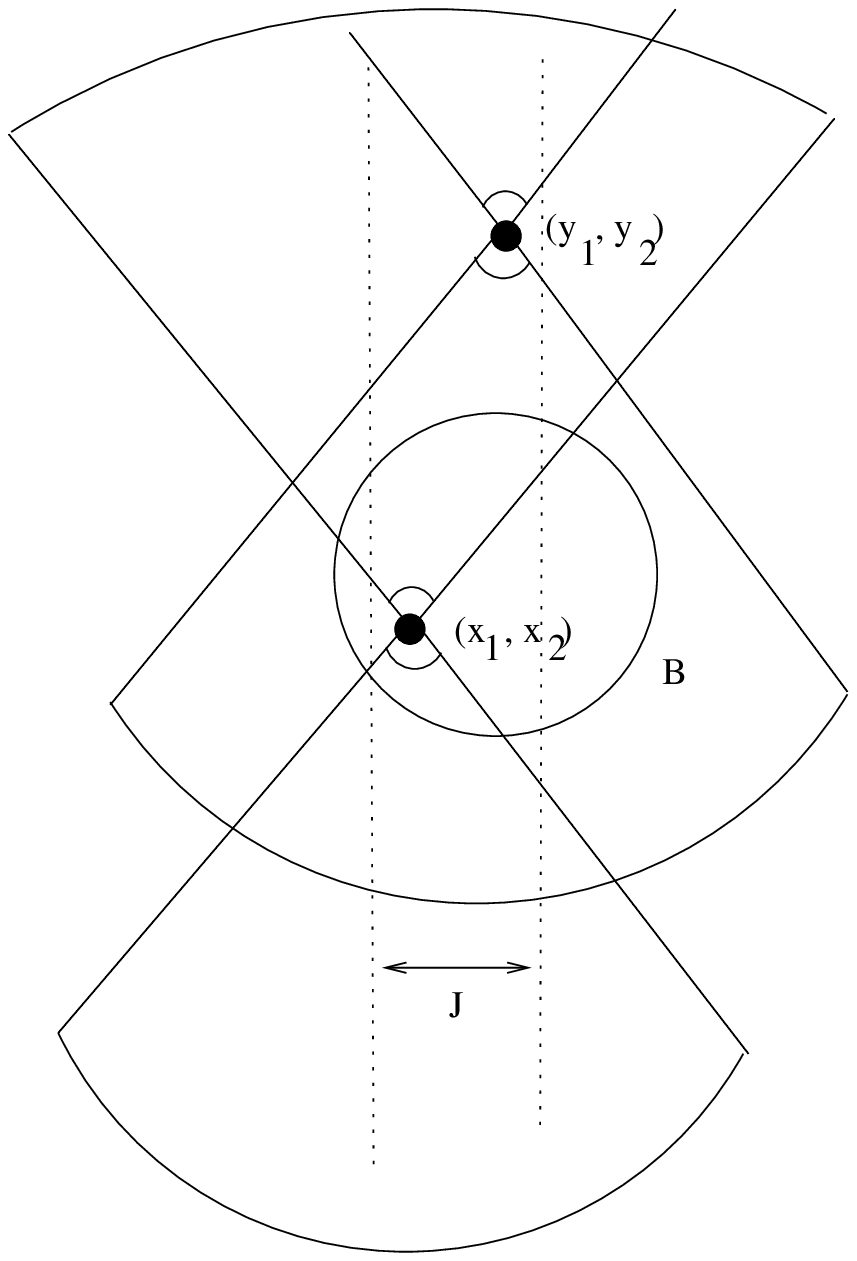}}
\caption{The situation in the proof of Lemma \ref{alip}.}
\label{fig4}
\end{figure}

We now modify the standard proof of the well-known result that a partially defined real-valued Lipschitz function extends to a totally defined real-valued Lipschitz function to obtain:

\begin{corollary}[Explicit rectifiability]  Let $B \subset \R^2$ be a ball of radius at most $r_{n,-}$.  Then there exists a Lipschitz function $F_B: \R \to \R$ of Lipschitz constant at most $M$ such that $|x_2 - F_B(x_1)| \leq r_{n+2,+}$ for all $(x_1,x_2) \in G'_{\omega} \cap B$.
\end{corollary}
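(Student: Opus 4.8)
The plan is to adapt the classical McShane/Whitney one-dimensional Lipschitz extension construction, using Lemma \ref{alip} as a substitute for the exact Lipschitz condition. First I would set $S := G'_\omega \cap B$ and, using the approximate Lipschitz property, observe that for any two points $(x_1,x_2),(y_1,y_2)\in S$ we have (since $\diam(B)\lesssim r_{n,-}\leq 10 r_{n-1,-}$, so the hypothesis of Lemma \ref{alip} is met) the inequality $x_2 - y_2 \leq \frac{1}{10} M|x_1-y_1| + \frac{1}{10} r_{n+2,+}$, and symmetrically with the roles reversed; in particular the ``graph-like'' inequality $|x_2 - y_2| \leq \frac{1}{10}M|x_1-y_1| + \frac{1}{10}r_{n+2,+}$ holds throughout $S$. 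This means that after projecting $S$ to the $x_1$-axis, the $x_2$-coordinate is determined up to an error $O(r_{n+2,+})$, and the relation is $\frac{1}{10}M$-Lipschitz up to that additive error.

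Next I would define a candidate extension by the explicit formula
$$ \tilde F_B(t) := \inf_{(x_1,x_2)\in S} \big( x_2 + \tfrac{M}{2}|t - x_1| \big) $$
(taking $\tilde F_B \equiv 0$, say, if $S$ is empty), which is the standard way to extend a partially-defined Lipschitz function. The infimum of a family of functions each with Lipschitz constant $M/2$ is itself Lipschitz with constant at most $M/2 \leq M$, provided it is finite somewhere, and finiteness is guaranteed because $S\subset B(0,1)$ is bounded; so $\tilde F_B: \R\to\R$ is well-defined with $\|\tilde F_B\|_{\Lip}\leq M$. The only thing to check is the approximation claim: for $(x_1,x_2)\in S$ we need $|x_2 - \tilde F_B(x_1)|\lesssim r_{n+2,+}$, and possibly with a worse constant than stated, which one then absorbs by replacing $r_{n+2,+}$ with a comparable scale (or by being slightly more careful — note the statement has $r_{n+2,+}$ while Lemma \ref{alip} produces $\frac{1}{10}r_{n+2,+}$, leaving room). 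Plugging $(y_1,y_2)=(x_1,x_2)$ into the infimum gives $\tilde F_B(x_1)\leq x_2$; for the reverse bound, take any $(y_1,y_2)\in S$ achieving (nearly) the infimum, and apply the approximate Lipschitz inequality to the pair $(x_1,x_2),(y_1,y_2)$: $x_2 \leq y_2 + \frac{1}{10}M|x_1-y_1| + \frac{1}{10}r_{n+2,+} \leq \big(y_2 + \frac{M}{2}|x_1-y_1|\big) + \frac{1}{10}r_{n+2,+}$, and taking the infimum over $(y_1,y_2)$ yields $x_2 \leq \tilde F_B(x_1) + \frac{1}{10}r_{n+2,+}$. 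Hence $|x_2-\tilde F_B(x_1)|\leq \frac{1}{10}r_{n+2,+}\leq r_{n+2,+}$, and we set $F_B := \tilde F_B$.

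The main obstacle — though it is a mild one — is verifying that Lemma \ref{alip} genuinely applies to \emph{all} relevant pairs of points in $S$, i.e. that the proximity hypothesis $|x_1-y_1| + |x_2-y_2| \leq 10 r_{n-1,-}$ is automatically satisfied. Since $B$ has radius at most $r_{n,-}$ and, by the scale separation \eqref{scale-sep}, $r_{n,-}\leq r_{n,+}\leq \tfrac12 r_{n-1,-}$, any two points of $B$ are within distance $2r_{n,-}\leq r_{n-1,-}\leq 10 r_{n-1,-}$ of each other, so the hypothesis holds; I would spell this out explicitly. A secondary minor point is handling the degenerate case $S=\emptyset$ (take $F_B\equiv 0$) and confirming the infimum defining $\tilde F_B$ is attained or at least approached, which follows from compactness of $G'_\omega\cap B$ (a closed subset of the compact $E$) together with continuity of $(x_1,x_2)\mapsto x_2 + \tfrac M2|t-x_1|$ — or one simply works with near-minimizers, which suffices for the argument above. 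Everything else is the routine verification that a sup/inf of uniformly Lipschitz functions is Lipschitz with the same constant.
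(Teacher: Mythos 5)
Your proof is correct and takes essentially the same approach as the paper: both construct the Lipschitz function via the classical McShane/Whitney extension formula, using Lemma \ref{alip} as a surrogate for an exact Lipschitz bound and checking approximation by plugging points of $G'_\omega\cap B$ back into the extremum. Your write-up is in fact a bit more careful than the paper's: the paper's displayed formula reads $\sup\{y_2 + M|x-y_1|\}$ over all of $G'_\omega$, which appears to be a sign (and domain) misprint for $\sup\{y_2 - M|x-y_1|: (y_1,y_2)\in G'_\omega\cap B\}$, the dual of your $\inf\{y_2+\tfrac{M}{2}|t-y_1|\}$, and you explicitly verify that the proximity hypothesis of Lemma \ref{alip} holds for all pairs in $B$ via the scale separation \eqref{scale-sep}.
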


\begin{proof}  We may assume that $G'_{\omega} \cap B$ is non-empty, otherwise there is nothing to prove.  If we then define
$$ F(x) := \sup \{ y_2 + M |x-y_1|: (y_1,y_2) \in G'_{\omega} \}$$
then the claim easily follows from the previous lemma.
\end{proof}

From this and Definition \ref{rec} we see that
$$ m( \{ x_1 \in \R: (x_1,x_2) \in G'_{\omega} \cap B \hbox{ for some } x_2 \in \R \} ) \lesssim R_E(r_{n+2,+}, M) r(B)$$
for all $B \in \B_{n}$. Summing in $B$ using \eqref{lb} we obtain
$$ m( \{ x_1 \in \R: (x_1,x_2) \in G'_{\omega} \cap B \hbox{ for some } x_2 \in \R \} ) \lesssim R_E(r_{n+2,+}, M) L$$
and the claim \eqref{rect} follows from \eqref{re}.  This concludes the proof of Theorem \ref{main} assuming Proposition \ref{tan}.

\section{Eliminating high multiplicity directions}\label{highmult}

We now locate some high multiplicity directions of $E$, and show that their direct contribution is negligible.  A key new difficulty in the quantitative setting is that the notion of high multiplicity will depend on scale, and we will need the pigeonhole principle (Lemma \ref{pigeon}) in order to find a favourable scale with which to perform the analysis.

\begin{definition}[High multiplicity lines]\label{hidef}  Let $1 \leq n \leq N$.  A line $l \subset \R^2$ is said to have \emph{high multiplicity at a scale index $\leq n$} if the set $E \cap l$ contains a set of cardinality at least $N^{1/100}$ which is $r_{n,-}$-separated, thus any two points in $E \cap l$ are separated by a distance of at least $r_{n,-}$.  We let $H_n \subset E \times S^1$ to be the set of all points $(x,\omega) \in E \times S^1$ such that the line $l_{x \cdot \omega,\omega}$ is high multiplicity at a scale index $\leq n$.
\end{definition}

Because $E$ is compact, it is not difficult to show that $H_n$ is also compact.  Also, from \eqref{scale-mono} we clearly have the nesting property 
$$ H_1 \subset H_2 \subset \ldots \subset H_N \subset E \times S^1.$$
Applying \eqref{muse} and Lemma \ref{pigeon}, we may thus find a scale index $0.1 N < n_0 \leq 0.9 N$ near which the $H_n$ are stable in the sense that
\begin{equation}\label{mus}
\mu \times \sigma( H_{n_0 + N^{-3/100} N} \backslash H_{n_0 - N^{-3/100} N} ) \lesssim N^{-3/100} L.
\end{equation}

We now fix this $n_0$ and work entirely within the scale indices 
$$n \in [n_0-N^{-3/100} N, n_0+N^{-3/100} N] \subset [1,N].$$

Now we show that near $N_0$, the $H_n$ have negligible Favard length.  In fact we shall show something slightly stronger:

\begin{definition}[Angular neighbourhoods]  Let $F \subset E \times S^1$ and $\theta > 0$.  We define the \emph{$\theta$-angular neighbourhood} $\Nb_\theta(F)$ of $F$ to be the set
$$ \Nb_\theta(F) := \{ (x,\omega) \in E \times S^1: \angle(\omega,\omega') < \theta \hbox{ for some } (x,\omega') \in F \}.$$
\end{definition}

We define the set
\begin{equation}\label{thdef}
 \tilde H := \Nb_{r_{n_0-N^{-3/100} N + 10,-}}(H_{n_0-N^{-3/100} N})
 \end{equation}
and the slightly smaller set
\begin{equation}\label{hpdef}
 H' := \Nb_{\frac{1}{2} r_{n_0-N^{-3/100} N + 10,-}}(H_{n_0-N^{-3/100} N}).
\end{equation}

\begin{lemma}[$\tilde H$ Favard-negligible]\label{hnfn}  We have
$$ \Fav(\tilde H) \lesssim N^{-1/100} L.$$
\end{lemma}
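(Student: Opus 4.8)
The plan is to bound $\Fav(\tilde H)$ directly by estimating, for almost every $\omega \in S^1$, the one-dimensional measure of the projection $\{x \cdot \omega : (x,\omega) \in \tilde H\}$, and then integrating over $\omega$ using Fubini. Fix $\omega$. A point $(x,\omega)$ lies in $\tilde H$ precisely when there is some $\omega'$ with $\angle(\omega,\omega')$ small such that the line $l_{x \cdot \omega', \omega'}$ has high multiplicity at scale index $\leq n_0 - N^{-3/100}N$, i.e. $E \cap l_{x\cdot\omega',\omega'}$ contains $\geq N^{1/100}$ points that are $r_{n_0-N^{-3/100}N,-}$-separated. The key point is that the length bound \eqref{length-bound} forces the set of such lines to be small: each high-multiplicity line $l$ must pass through at least $N^{1/100}$ of the balls $B \in \B_{n_0}$ from a covering of $E$ with $\sum_{B \in \B_{n_0}} r(B) \lesssim L$ (using \eqref{length-bound} at scale $n_0$), since the separation is at least $r_{n_0-N^{-3/100}N,-} \geq r_{n_0,+} \geq r(B)$, so distinct points on $l$ at this separation cannot share a ball. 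Hence the measure of the set of $c$ such that $l_{c,\omega}$ is high-multiplicity, for fixed $\omega$, is at most $\frac{1}{N^{1/100}}$ times $\sum_B (\text{width of } B) \lesssim \frac{1}{N^{1/100}} \sum_B r(B) \lesssim N^{-1/100} L$, since a line in direction $\omega$ meets the projection-to-$\omega^\perp$ of a ball $B$ in a set of measure comparable to $r(B)$ — more precisely, counting with multiplicity, $\int_{\R} \#\{B \in \B_{n_0} : l_{c,\omega} \cap B \neq \emptyset\}\, dc \lesssim \sum_B r(B) \lesssim L$, and on the high-multiplicity set the integrand is $\geq N^{1/100}$.

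Next I would absorb the angular neighbourhood. The dilation from $H_{n_0-N^{-3/100}N}$ to $\tilde H$ by an angle $\theta := r_{n_0-N^{-3/100}N+10,-}$ is extremely small — being a scale far finer than anything appearing in the high-multiplicity condition at scale index $\leq n_0 - N^{-3/100}N$. The crucial observation is that a line of high multiplicity at scale index $\leq m$ remains high multiplicity at scale index $\leq m' $ for any $m' \geq m$ after a small enough angular perturbation: perturbing $\omega$ to $\omega'$ with $\angle(\omega,\omega') < \theta$ and keeping $x \in E$ fixed moves the line $l_{x\cdot\omega,\omega}$ through $x$ only by a rotation about $x$, and since $E \subset B(0,1)$ and the separated points on the original line lie within distance $2$ of $x$, a rotation by angle $\theta$ displaces each such point by at most $2\theta$. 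If the original separation is $r_{n_0-N^{-3/100}N,-}$ and $2\theta = 2 r_{n_0-N^{-3/100}N+10,-} \leq \frac{1}{2} r_{n_0-N^{-3/100}N,-}$ by \eqref{scale-sep} (ten applications give a factor $2^{-10}$, far more than enough), the perturbed points are still, say, $\frac{1}{2} r_{n_0-N^{-3/100}N,-}$-separated, but we do need to be slightly careful: the displaced points are near the perturbed line, not exactly on it. The clean way around this is to instead argue that $(x,\omega) \in \tilde H$ implies the line $l_{x\cdot\omega,\omega}$ itself passes within distance $O(\theta)$ of $\geq N^{1/100}$ points of $E$ that are $\gtrsim r_{n_0-N^{-3/100}N,-}$-separated, and then each of these points lies in some ball $B \in \B_{n_0}$ through which $l_{x\cdot\omega,\omega}$ passes after dilating $B$ by a factor $\leq 2$ — this costs only a constant factor in the covering-lemma / counting bound and does not affect the estimate.

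Putting these together: for each fixed $\omega$, $\{x\cdot\omega : (x,\omega)\in \tilde H\}$ is contained in the $c$-set for which $l_{c,\omega}$ meets $\gtrsim N^{1/100}$ of the (doubled) balls in $\B_{n_0}$, which by the counting/Fubini argument above has measure $\lesssim N^{-1/100} L$; integrating over $\omega \in S^1$ (which has total mass $1$ under $\sigma$) gives $\Fav(\tilde H) \lesssim N^{-1/100} L$, as claimed. The main obstacle I anticipate is the bookkeeping around the angular neighbourhood — making precise that the small rotation genuinely preserves (a constant fraction of) the separation and multiplicity after passing to doubled balls — but this is a routine geometric estimate given the enormous scale separation \eqref{scale-sep} and the containment $E \subset B(0,1)$; no new idea beyond the length bound is needed.
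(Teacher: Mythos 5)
Your overall strategy matches the paper's: fix a direction, show that a high-multiplicity line must thread through $\gtrsim N^{1/100}$ disjoint covering balls, use the length bound $\sum_B r(B)\lesssim L$ and a Fubini-type count over the $c$-parameter to get $\lesssim N^{-1/100}L$, and integrate in the direction. The first half of your argument is sound.

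The gap is in the choice of covering scale, and it breaks the step where you absorb the angular neighbourhood. You take the cover $\B_{n_0}$ at scale index $n_0$, so $r(B)\in[r_{n_0,-},r_{n_0,+}]$. The angular enlargement defining $\tilde H$ is by $\theta := r_{n_0 - N^{-3/100}N + 10,-}$, and rotating by $\theta$ about $x\in B(0,1)$ moves points of $E$ by up to $2\theta$. Now $n_0 - (n_0 - N^{-3/100}N + 10) = N^{97/100}-10$, so by repeated use of \eqref{scale-sep} together with \eqref{scale-mono} you get $r_{n_0,+}\leq 2^{-(N^{97/100}-10)}\theta$: the balls in $\B_{n_0}$ are exponentially tiny compared to the displacement $2\theta$. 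Consequently, "the perturbed line passes through $2B$" is false; it passes through $CB$ with $C\sim \theta/r(B)$ unbounded, and the resulting count $\int_{\R}\#\{B: l_{c,\omega}\cap CB\neq\emptyset\}\,dc\lesssim\sum_B(r(B)+\theta)$ is not $\lesssim L$ because the $\theta$-terms dominate (there can be $\sim L/r_{n_0,-}$ balls). So the doubling trick does not repair this at scale $n_0$.

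The fix, which is what the paper does, is to choose the cover at a scale index between $n_0 - N^{-3/100}N$ and $n_0 - N^{-3/100}N + 10$ rather than all the way down at $n_0$ — the paper uses index $n_0 - N^{-3/100}N + 5$. Then on the one hand $r(B)\geq r_{n_0-N^{-3/100}N+5,-}\geq 2^{5}\theta$ (so the $2\theta$-displacement is absorbed by passing to a fixed dilate such as $5B$), and on the other hand $r(B)\leq r_{n_0-N^{-3/100}N+5,+}\leq 2^{-5}r_{n_0-N^{-3/100}N,-}$ (so the $N^{1/100}$ separated points still land in pairwise-disjoint balls). Both inequalities are needed simultaneously, which is why the scale must sit strictly between the separation scale $r_{n_0-N^{-3/100}N,-}$ and the angular scale $\theta = r_{n_0-N^{-3/100}N+10,-}$; your choice of $n_0$ satisfies the second constraint by an enormous margin but violates the first. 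With that one change the rest of your counting/Fubini argument goes through and is essentially the paper's proof.
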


\begin{remark} This is a quantitative counterpart of \cite[Lemma 18.4]{mattila}.
\end{remark}

\begin{proof}  Let us write $n^-_0 := n_0 - N^{-3/100} N$ for brevity. From \eqref{length-bound} we can find a collection $\B$ of balls $B$ with radius $r(B) \in [r_{n^-_0+5,-},r_{n^-_0+5,+}]$ which cover $E$ such that 
\begin{equation}\label{sumb}
\sum_B r(B) \lesssim L.
\end{equation}
If a line $l_{x \cdot \omega, \omega}$ is high multiplicity at a scale index $\leq n^-_0$, then by definition it intersects $E$ in $N^{1/100}$ points $x_1,\ldots,x_{N^{1/100}}$ which are $r_{n^-_0,-}$-separated.  Each of these points is contained in a ball $B_1,\ldots,B_{N^{1/100}}$ in $\B$, where each $B_i$ has radius $r(B_i) \leq r_{n^-_0+5,+} \leq \frac{1}{32} r_{n^-_0,-}$, thanks to \eqref{scale-mono}, \eqref{scale-sep}; in particular, the $B_i$ are all disjoint.  Now let $\omega' \in S^1$ be such that $\angle(\omega,\omega') \leq r_{n^-_0-10,-}$, thus $\angle(\omega,\omega') \leq \frac{1}{32} r(B_i)$ by \eqref{scale-mono}, \eqref{scale-sep}.
From elementary geometry (and the fact that $x, x_1,\ldots,x_{N^{1/100}}$ all lie in $B(0,1)$) we then see that the line $l_{x \cdot \omega', \omega'}$ then meets each of the dilated balls $5B_i$ (defined as the ball with the same centre as $B_i$ and five times the radius) in a line segment of length $\gtrsim r(B_i)$ (see Figure \ref{fig5}). To put it another way, we have
$$ \int_{l_{x \cdot \omega', \omega'}} \frac{1}{r(B_i)} 1_{B_i}\ d\S^1 \gtrsim 1$$
for all $1 \leq i \leq N^{1/100}$.  Summing this, we conclude that
$$ \int_{l_{x \cdot \omega', \omega'}} \sum_{B\in \B} \frac{1}{r(B)} 1_{B}\ d\S^1 \gtrsim N^{1/100}$$
for all $(x,\omega') \in \tilde H$.  By Fubini's theorem, we conclude that for every $\omega' \in S^1$ we have
$$ \int_{\R^2} \sum_{B\in \B} \frac{1}{r(B)} 1_{B}\ d\S^1 \gtrsim N^{1/100} m(\{ x \cdot \omega': (x,\omega') \in \tilde H \}).$$
Applying \eqref{sumb}, we obtain
$$ m(\{ x \cdot \omega': (x,\omega') \in \tilde H \}) \lesssim N^{-1/100} L.$$
Integrating this in $\omega'$, we obtain the claim.
\end{proof}

\begin{figure}[tb]
\centerline{\psfig{figure=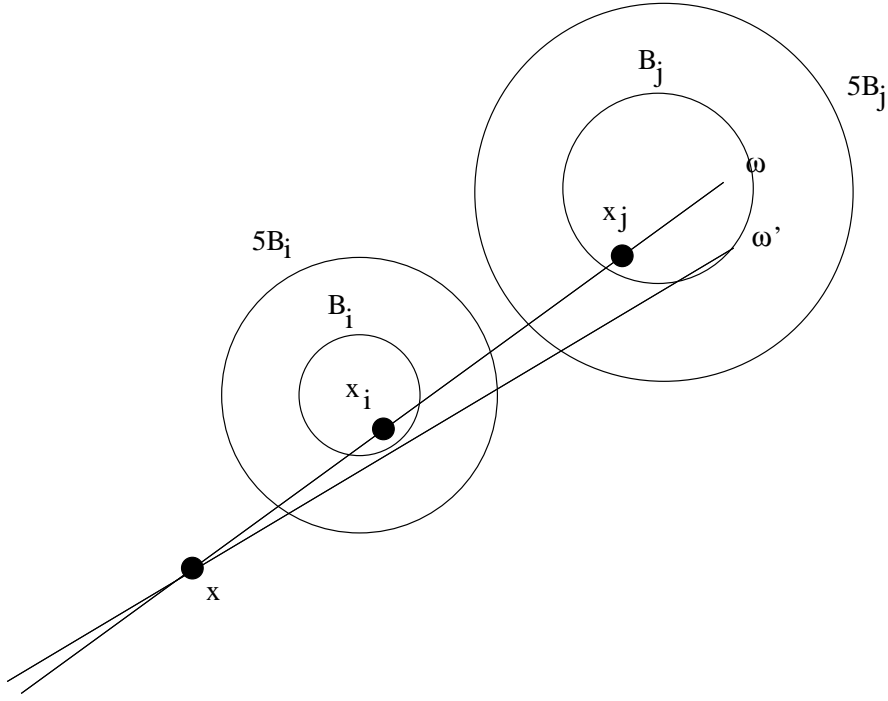}}
\caption{The situation in the proof of Lemma \ref{hnfn}, not drawn to scale.}
\label{fig5}
\end{figure}

We define the exceptional set 
$$\Delta H := H_{n_0+N^{-3/100} N} \backslash H',$$
thus from \eqref{mus} we have that
\begin{equation}\label{mus2}
\mu \times \sigma( \Delta H ) \lesssim N^{-3/100} L.
\end{equation}
Because of this, the contribution of $\Delta H$ will be manageable in the sequel.  We make the technical remark that $\Delta H$ is angularly closed, i.e. for any $x \in E$, the set $\{ \omega: (x,\omega) \in \Delta H \}$ is closed.

\section{Eliminating positive multiplicity directions}\label{posmult}

In the preceding section we obtained two sets $\tilde H, \Delta H \subset E \times S^1$ of high multiplicity which were well controlled.  Now we perform another scale refinement to also control those directions in which the multiplicity is merely \emph{positive}, leaving only the zero-multiplicity directions.

\begin{definition}[Positive multiplicity direction]\label{posdef}  Let $n_0 - 0.9 N^{-3/100} N \leq n \leq n_0 + 0.9 N^{-3/100} N$.  We say that a point $(x,\omega) \in E \times S^1$ has \emph{positive multiplicity at scale index $n$} if the line $l_{x \cdot \omega,\omega}$ contains a point $y$ in $E$ such that $r_{n-N^{-7/100} N,-} \leq |x-y| \leq r_{n+N^{-7/100} N,+}$.  We let $P_n \subset E \times S^1$ be the set of all $(x,\omega)$ which have positive multiplicity at scale index $n$.
\end{definition}

Suppose that $(x,\omega)$ does not lie in $H' \cup \Delta H$, so in particular $(x,\omega)$ does not lie in $H_{n_0+N^{-3/100} N}$.  Applying Definition \ref{hidef}, we conclude that $l_{x \cdot \omega, \omega}$ contains at most $N^{1/100}$ points of $E$ which are $r_{n_0+N^{-3/100} N,-}$-separated.  Comparing this with Definition \ref{posdef}, we see that $(x,\omega)$ can lie in at most $O(N^{1/100} N^{-7/100} N)$ of the sets $P_n$.  Integrating this and using \eqref{muse}, we conclude that
$$ \sum_{n_0 - 0.9 N^{-3/100} N \leq n \leq n_0 + 0.9 N^{-3/100} N} \mu \times \sigma(P_n \backslash (\tilde H \cup \Delta H)) \lesssim N^{1/100} N^{-7/100} N L.$$
Applying the pigeonhole principle, we can thus find
$$ n_0 - 0.9 N^{-3/100} N \leq n_1 \leq n_0 + 0.9 N^{-3/100} N$$
such that
\begin{equation}\label{mupn}
 \mu \times \sigma(P_{n_1} \backslash (H' \cup \Delta H)) \lesssim N^{-3/100} L.
\end{equation}

Because of this, the contribution of $P_{n_1}$ will be manageable in the sequel.  Also observe that $P_{n_1}$ is closed, and hence $P_{n_1} \backslash H'$ is angularly closed in the sense of the preceding section.
Henceforth we fix $n_1$, and work entirely within the scale indices 
$$ n \in [n_1 - N^{-7/100} N, n_1+N^{-7/100} N] \subset [n_0-N^{-3/100} N, n_0+N^{-3/100} N] \subset [1,N].$$

\section{Eliminating high density strips}\label{highdens}

We now perform a variant of the analysis of the preceding sections, in order to eliminate the contribution of certain strips $\{ x: x \cdot \omega \in J \}$ in $\R^2$ which are capturing too much of the mass of $\mu$.

\begin{definition}[High density strips]  Let $1 \leq n \leq N$.  An infinite strip $\{ x \in \R^2: x \cdot \omega \in J \}$, where $\omega \in S^1$ and $J$ is an interval, is said to have \emph{high density} if we have
$$ \mu( \{ x \in E: x \cdot \omega \in J \} ) \geq N^{1/100} m(J).$$
We let $D_n \subset E \times S^1$ be the set of all points $(x,\omega) \in E \times S^1$ such that the line $l_{x \cdot \omega,\omega}$ lies in a high-density strip $\{ x \in \R^2: x \cdot \omega \in J \}$ whose width $m(J)$ is at least $r_{n,-}$.
\end{definition}

One easily verifies that $D_n$ is compact and that
$$ D_1 \subset D_2 \subset \ldots \subset D_N \subset E \times S^1.$$
Restricting to scale indices $n$ between $n_1 - 0.9 N^{-7/100} N$ and $n_1 + 0.9 N^{-7/100} N$, and then 
applying \eqref{muse} and Lemma \ref{pigeon}, we can locate a scale index $n_1 - 0.9 N^{-7/100} N \leq n_2 \leq n_1 + 0.9 N^{-7/100} N$ such that
\begin{equation}\label{mud}
 \mu \times \sigma( D_{n_2 + N^{-10/100} N} \backslash D_{n_2 - N^{-10/100} N} ) \lesssim N^{-3/100} L.
\end{equation}
Let us now fix this $n_2$, and work entirely inside the range of scale indices
\begin{align*}
n &\in [n_2 - N^{-10/100} N, n_2+N^{-10/100} N] \\
&\subset [n_1 - N^{-7/100} N, n_1+N^{-7/100} N] \\
&\subset [n_0-N^{-3/100} N, n_0+N^{-3/100} N] \\
\subset [1,N].
\end{align*}

Define
$$ \tilde D := \N_{r_{n_2-N^{-10/100} N+10,-}}( D_{n_2-N^{-10/100} N} )$$
and the slightly smaller set
$$ D' := \N_{\frac{1}{2} r_{n_2-N^{-10/100} N+10,-}}( D_{n_2-N^{-10/100} N} )$$
We have an analogue of Lemma \ref{hnfn}:

\begin{lemma}[$\tilde D$ Favard-negligible]\label{dnfn} We have
$$ \Fav(\tilde D) \lesssim N^{-1/100} L.$$
\end{lemma}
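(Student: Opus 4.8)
The plan is to mimic the proof of Lemma \ref{hnfn}, replacing the ``high multiplicity'' counting argument (which exploited many well-separated points on a line) by a ``high density'' counting argument (which exploits a strip carrying an excessive amount of $\mu$-mass). First I would fix, using \eqref{length-bound} at a scale index slightly finer than $n_2^- := n_2 - N^{-10/100} N$, a collection $\B$ of balls $B$ of radius $r(B) \in [r_{n_2^- + 5,-}, r_{n_2^- + 5,+}]$ covering $E$ with $\sum_{B \in \B} r(B) \lesssim L$. Now suppose $(x,\omega')$ lies in $\tilde D$, so there is some $\omega$ with $\angle(\omega,\omega') \leq r_{n_2^- + 10,-}$ and a high-density strip $\{y : y \cdot \omega \in J\}$ of width $m(J) \geq r_{n_2^-,-}$ that contains $l_{x \cdot \omega, \omega}$. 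By definition of high density, $\mu(\{y \in E : y \cdot \omega \in J\}) \geq N^{1/100} m(J)$.

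The key geometric point (cf.\ \cite[Lemma 18.2 or 18.4]{mattila}) is that the mass of $\mu$ trapped in the strip gets smeared, after projection in a nearby direction $\omega'$, across an interval of comparable length. Concretely, I would argue as follows: since $\angle(\omega,\omega')$ is much smaller than $r(B)/\diam(B(0,1))$ for every $B \in \B$ (using \eqref{scale-mono}, \eqref{scale-sep} to compare the tiny angular scale with the ball radii), any line $l_{x \cdot \omega', \omega'}$ through the strip, when it passes through a ball $B$ that meets the strip, meets the dilate $5B$ in a segment of length $\gtrsim r(B)$. Summing $\int_{l_{x \cdot \omega',\omega'}} \frac{1}{r(B)} 1_{B}\,d\S^1 \gtrsim 1$ over those $B \in \B$ whose centres lie in (a slight fattening of) the strip, and noting that $\mu(E \cap \text{strip}) \leq \sum_{B : B \cap \text{strip} \neq \emptyset} \diam(B) \cdot (\text{const})$ is \emph{not} quite what I want — instead I want a lower bound on how many balls meet the strip, which is what forces the Favard contribution to be small. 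The cleaner route: the strip has width $\geq r_{n_2^-,-}$, and $E \cap \text{strip}$ has $\mu$-mass $\geq N^{1/100} m(J)$; covering $E \cap \text{strip}$ by balls of $\B$ and using $\sum r(B_i) \gtrsim \mu(E \cap \text{strip}) \geq N^{1/100} m(J)$ shows the total radius of balls meeting the strip is $\gtrsim N^{1/100} m(J)$. Hence $\int_{l_{x \cdot \omega', \omega'}} \sum_{B \in \B} \frac{1}{r(B)} 1_B \, d\S^1 \gtrsim N^{1/100}$ for every $(x,\omega') \in \tilde D$, because the projected line crosses balls of total radius $\gtrsim N^{1/100} m(J)$ each contributing $\gtrsim r(B)$ worth of $\S^1$-length after division by $r(B)$ — wait, that needs the strip width to control the transverse extent; I would instead observe directly that the line spends $\S^1$-length $\gtrsim r(B)$ in each $5B$ it meets, and the number-weighted sum over the $\gtrsim N^{1/100} m(J)/r_{n_2^-+5,+}$ relevant balls gives the bound $\gtrsim N^{1/100}$.

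With $\int_{l_{x\cdot\omega',\omega'}} \sum_{B\in\B} \frac{1}{r(B)} 1_B \, d\S^1 \gtrsim N^{1/100}$ for all $(x,\omega')\in\tilde D$ in hand, I would finish exactly as in Lemma \ref{hnfn}: by Fubini, for each fixed $\omega' \in S^1$,
$$ \int_{\R^2} \sum_{B \in \B} \frac{1}{r(B)} 1_B \, d\S^1 \gtrsim N^{1/100} \, m(\{ x \cdot \omega' : (x,\omega') \in \tilde D \}),$$
the left side is $\sum_{B \in \B} \frac{1}{r(B)} \S^1(B) \lesssim \sum_{B \in \B} r(B) \lesssim L$, so $m(\{x \cdot \omega' : (x,\omega') \in \tilde D\}) \lesssim N^{-1/100} L$ for every $\omega'$, and integrating in $\omega'$ gives $\Fav(\tilde D) \lesssim N^{-1/100} L$.

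The main obstacle I anticipate is making the geometric ``smearing'' step fully rigorous: precisely, verifying that the angular tolerance $r_{n_2^- + 10,-}$ is small enough relative to the ball radii that a line in direction $\omega'$ passing near the strip genuinely crosses each relevant $5B$ in a segment of length $\gtrsim r(B)$, and correctly bookkeeping the lower bound on the total radius of balls intersecting the high-density strip (which is where the factor $N^{1/100}$ enters) via \eqref{length-bound} and the high-density hypothesis. Everything else is a routine transcription of the proof of Lemma \ref{hnfn}.
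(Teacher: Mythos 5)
There is a genuine gap in the step where you try to convert the high-density hypothesis into a lower bound on the number of balls that the line meets. You assert that covering $E \cap \{y : y\cdot\omega \in J\}$ by the balls of $\B$ gives $\sum_i r(B_i) \gtrsim \mu(E \cap \hbox{strip}) \geq N^{1/100} m(J)$, but this inequality goes the wrong way: a covering by balls of radius comparable to $r_{n_2^-+5,\pm}$ only upper-bounds the \emph{content} $\S^1_{0,r_{n_2^-+5,+}}$ of the set, which is $\leq \S^1$, not $\geq$. At a fixed positive scale the measure $\mu$ can vastly exceed any covering sum (a long curve packed into a single ball of radius $r_+$ has covering sum $2r_+$ but arbitrarily large $\S^1$-measure), and indeed the whole point of \eqref{length-bound} is that $\B$ is an \emph{efficient} covering; nothing forces the sub-collection of balls meeting the strip to have total radius larger than $\sim m(J)$. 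There is a second, geometric, problem even if that count were available: in Lemma \ref{hnfn} the $N^{1/100}$ separated points lie \emph{on} the line $l_{x\cdot\omega,\omega}$, so the perturbed line passes through all of their (tiny) balls; here the mass is only known to lie in a strip of width $m(J) \geq r_{n_2^-,-} \geq 32\, r(B)$, which is much wider than the balls, so the line $l_{x\cdot\omega',\omega'}$ sees only a thin transversal sub-strip and may miss almost all of the relevant balls. You flag this worry yourself (``that needs the strip width to control the transverse extent'') but do not resolve it.

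The paper's proof takes a different and much shorter route that sidesteps both issues: it never invokes \eqref{length-bound} or a ball covering. Fixing $\omega = e_1$ and letting $\mu_1$ be the pushforward of $\mu$ under $(y_1,y_2)\mapsto y_1$, the high-density condition in a direction $\omega'$ with $\angle(\omega',e_1) \leq r_{n_2^-+10,-} \ll m(J)$ implies (since everything lies in $B(0,1)$) that $\mu_1(2J) \geq N^{1/100} m(J)$ with $x_1 \in 2J$, hence the Hardy--Littlewood maximal function of $\mu_1$ is $\gtrsim N^{1/100}$ at $x_1$. The weak-type $(1,1)$ maximal inequality for measures, together with $\mu_1(\R) \leq L$ from \eqref{muse}, then gives $m(\{x_1 : (x,e_1) \in \tilde D\}) \lesssim N^{-1/100} L$, and one integrates in $\omega$. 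If you want to repair your argument, you should replace the ball-counting step by this maximal-function estimate on the projected measure; the high-density definition is tailored exactly so that it feeds into the Hardy--Littlewood inequality.
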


\begin{proof}
By Fubini's theorem, it suffices to show that
$$ m( \{ x \cdot \omega: (x,\omega) \in \tilde D \} ) \lesssim N^{-1/100} L$$
for all $\omega \in S^1$.

Fix $\omega$; by rotation symmetry we can take $\omega = e_1$, so we need to show
\begin{equation}\label{mool}
m( \{ x_1: (x_1,x_2,e_1) \in \tilde D \} ) \lesssim N^{-1/100} L.
\end{equation}
We also abbreviate $n_2^- := n_2-N^{-10/100} N$.  Unwrapping the definitions, we see that if $(x_1,x_2,e_1) \in \tilde D$, then there exists $\omega' \in S^1$ with $\angle(\omega', e_1) \leq r_{n_2^- + 10,-}$ and an interval $J$ containing $x \cdot \omega'$ with $m(J) \geq r_{n_2^-,-}$ such that
$$ \mu( \{ (y_1,y_2) \in E: (y_1,y_2) \cdot \omega \in J \} ) \geq N^{1/100} m(J).$$
From \eqref{scale-mono}, \eqref{scale-sep} we have $\angle(\omega', e_1) \leq \frac{1}{2^{10}} m(J)$.  Also,
$(x_1,x_2)$ and $(y_1,y_2)$ lie in the unit ball. From this, we see that $x_1 \in 2J$, and furthermore if $(y_1,y_2) \in E$ is such that $(y_1,y_2) \cdot \omega \in J$, then $y_1 \in 2J$.  Thus, if we let $\mu_1$ be the pushforward of the measure $\mu$ to $\R$ under the projection map $(y_1,y_2) \mapsto y_1$, we see that
$$ \mu_1(2J) \geq N^{1/100} m(J).$$
Since $2J$ contains $x_1$, we thus see that the Hardy-Littlewood maximal function $M\mu_1(x) := \sup_{r > 0} \frac{1}{2r} \mu_1([x-r,x+r])$ of $\mu_1$ takes the value $\gtrsim N^{1/100}$ on $x_1$.  On the other hand, from \eqref{muse} we see that the total mass $\mu_1(\R)$ of the positive measure $\mu_1$ is at most $L$.  The claim \eqref{mool} then follows from the Hardy-Littlewood maximal inequality $m( \{ x: M \mu \geq \lambda \} ) \lesssim \frac{1}{\lambda} \mu(\R)$ for measures (see e.g. \cite[Theorem 2.19]{mattila}).
\end{proof}

If we define
$$ \Delta D := D_{n_2 + N^{-10/100} N} \backslash D'$$
then from \eqref{mud} we have
\begin{equation}\label{mud-2}
\mu \times \sigma( \Delta D ) \lesssim N^{-3/100} L.
\end{equation}

We now define a unified exceptional set
$$ \Delta := \Delta H \cup (P_{n_1} \backslash H') \cup \Delta D;$$
from \eqref{mus2}, \eqref{mupn}, \eqref{mud-2} we have
\begin{equation}\label{delta-small}
\mu \times \sigma(\Delta) \lesssim N^{-3/100} L.
\end{equation}
Also, $\Delta$ is angularly closed. Because of this, the contribution of $\Delta$ will be manageable in the sequel.

\section{Conclusion of the argument}\label{conclude-sec}

Having refined our set of scales suitably, and controlled various exceptional sets, namely two large but coarse sets $\tilde H, \tilde D$, and one fine but small set $\Delta$, we are now ready to establish Proposition \ref{tan}.

It will suffice to show that
$$ \Fav( \Norm_{n_2, 10^4/r_{n_2-200,-}} ) \lesssim N^{-1/100} L.$$

In view of Lemma \ref{hnfn}, Lemma \ref{dnfn}, and \eqref{fav-sub}, it suffices to show that
\begin{equation}\label{favor}
 \Fav( F ) \lesssim N^{-1/100} L
 \end{equation}
where $F$ is the set
$$ F := \Norm_{n_2, 10^4/r_{n_2-200,-}} \backslash (\tilde H \cup \tilde D).$$

The key observation is that $F$ only consists of points which are close to many points in $\Delta$.

\begin{lemma}\label{om} Let $(x,\omega) \in F$.  Then the set
$$ \Omega_{x,\omega} := \{ \omega' \in S^1: \angle(\omega,\omega') \leq 10^4 r_{n_2-200,-}; (x,\omega') \in \Delta \}
\subset S^1$$
has measure $\sigma(\Omega_{x,\omega}) \gtrsim N^{-2/100} r_{n_2-200,-}$.
\end{lemma}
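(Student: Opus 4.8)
The plan is to use the normality of $(x,\omega)$ to trap a set of positive $\mu$-measure in a thin double-sector issuing from $x$ in the direction $\omega^\perp$, and then to radially project that set from $x$ onto the circle: each projected point records a direction $\omega'$ with $(x,\omega')\in\Delta$, and the whole point is to show the projection cannot collapse by more than a factor $N^{2/100}$. First I unpack the hypothesis. Writing $M:=10^4/r_{n_2-200,-}$, so that $M/10^4=1/r_{n_2-200,-}$, the membership $(x,\omega)\in\Norm_{n_2,M}$ furnishes, via Definition \ref{norm-def}, a radius $r$ with $r_{n_2+100,-}\le r\le r_{n_2-100,+}$ and
$$ \mu(A) > \tfrac{1}{10^4}N^{-1/100}\,r\,r_{n_2-200,-},\qquad A:=X_{x,\omega}\!\left(r,\tfrac{1}{r_{n_2-200,-}}\right)\setminus X_{x,\omega}\!\left(r_{n_2+100,-},\tfrac{1}{r_{n_2-200,-}}\right). $$
Every $y\in A$ satisfies $r_{n_2+100,-}\le|y-x|\le r$ and $|(y-x)\cdot\omega|\le r_{n_2-200,-}|y-x|$, so the unit vector $y-x$ makes angle $O(r_{n_2-200,-})$ with $\pm\omega^\perp$; I let $\omega'_y\in S^1$ be the unit normal to $y-x$ chosen with $\angle(\omega,\omega'_y)=O(r_{n_2-200,-})\le 10^4 r_{n_2-200,-}$, and set $\Phi(y):=\omega'_y$, a ``rotated radial projection'' $\Phi\colon A\to S^1$.

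The next step is to check that $\Phi(A)\subseteq\Omega_{x,\omega}$. Fix $y\in A$. The line $l_{x\cdot\omega'_y,\omega'_y}$ passes through the two points $x,y\in E$, and $|x-y|$ lies in $[r_{n_2+100,-},r_{n_2-100,+}]$, which by the scale-separation bound \eqref{scale-sep} and the bookkeeping $n_1,n_2=n_0+O(N^{-3/100}N)$ is contained in $[r_{n_1-N^{-7/100}N,-},\,r_{n_1+N^{-7/100}N,+}]$; hence $(x,\omega'_y)\in P_{n_1}$ by Definition \ref{posdef}. Moreover $(x,\omega'_y)\notin H'$: if it were, then by \eqref{hpdef} some $\omega''$ with $\angle(\omega'_y,\omega'')<\tfrac12 r_{n_0-N^{-3/100}N+10,-}$ would satisfy $(x,\omega'')\in H_{n_0-N^{-3/100}N}$, whence $\angle(\omega,\omega'')<r_{n_0-N^{-3/100}N+10,-}$ by \eqref{scale-sep}, so $(x,\omega)\in\tilde H$ by \eqref{thdef}, contradicting $(x,\omega)\in F$. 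Thus $(x,\omega'_y)\in P_{n_1}\setminus H'\subseteq\Delta$, and since $\angle(\omega,\omega'_y)\le 10^4 r_{n_2-200,-}$ we obtain $\omega'_y\in\Omega_{x,\omega}$, as required.

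It then remains to bound $\sigma(\Omega_{x,\omega})\ge\sigma(\Phi(A))$ from below. Put $n_2^+:=n_2+N^{-10/100}N$ and $\delta:=r_{n_2^+,-}/r$, and cover $\Phi(A)$ by $O(\sigma(\Phi(A))/\delta)+1$ arcs of length $\delta$. For an arc centred at $\omega'$, the preimage $\Phi^{-1}(\text{arc})\cap A$ is contained in the strip $\{z:|z\cdot\omega'-x\cdot\omega'|\le \delta r=r_{n_2^+,-}\}$ about $l_{x\cdot\omega',\omega'}$, a strip of width $\ge r_{n_2^+,-}$. Now I invoke the dichotomy engineered into the exceptional sets: either $(x,\omega')\notin D_{n_2^+}$, in which case that strip is not high-density and therefore carries $\mu$-mass at most $N^{1/100}r_{n_2^+,-}$; or $(x,\omega')\in D_{n_2^+}$, in which case $(x,\omega')\notin D'$ (else $(x,\omega)\in\tilde D$, since $\angle(\omega,\omega')$ is tiny relative to the radius defining $D'$), hence $(x,\omega')\in\Delta D\subseteq\Delta$, so $\omega'$ already lies in $\Omega_{x,\omega}$, together with a whole sub-arc around it of length comparable to the width of the witnessing high-density strip. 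Summing the mass over the arcs of the first kind, and using that the arcs of the second kind each supply an arc of $\Omega_{x,\omega}$, produces a bound of the shape $\mu(A)\lesssim(\sigma(\Omega_{x,\omega})/\delta+1)\,N^{1/100}r_{n_2^+,-}$; comparing this with $\mu(A)>10^{-4}N^{-1/100}r\,r_{n_2-200,-}$, and absorbing the additive term $N^{1/100}r_{n_2^+,-}$ into the right-hand side by means of \eqref{scale-sep}, yields $\sigma(\Omega_{x,\omega})\gtrsim N^{-2/100}r_{n_2-200,-}$.

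The step I expect to be the real obstacle is this last one — quantifying how much the radial projection $\Phi$ is allowed to collapse. The two clean regimes (mass genuinely spread over many directions, and mass trapped in a high-density strip detected by $D_{n_2^+}$) are handled as above, but care is needed when the mass of $A$ is concentrated near a single line through $x$ without the associated strip registering as high-density: there the concentration must be returned to the quantitative unrectifiability hypothesis \eqref{re} at a suitable intermediate scale (that line being a Lipschitz graph of slope $O(1)$), and it is precisely the bookkeeping among the scales $n_2-N^{-10/100}N$, $n_2$, $n_2^+$, $r$ and $r_{n_2-200,-}$ — in particular showing that $N^{1/100}r_{n_2^+,-}$ is negligible next to $N^{-2/100}r\,r_{n_2-200,-}$ — that carries the weight of the argument.
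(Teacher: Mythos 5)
Your first half — showing that every direction $\omega'_y$ with $y\in A$ lands in $\Omega_{x,\omega}$ via $(x,\omega'_y)\in P_{n_1}\setminus H'\subseteq\Delta$ — is correct, and is just the direct form of the contrapositive statement the paper uses (that directions outside the covering arcs carry no mass of $A$). The genuine gap is in the second half. You cover $\Phi(A)$ by arcs of fixed angular width $\delta=r_{n_2^+,-}/r$ and split into low-density and high-density arcs. For a low-density arc you correctly get $\mu\lesssim N^{1/100}r_{n_2^+,-}$. But for a high-density arc you never bound the mass: you only observe that the centre lies in $\Delta D\subseteq\Delta$, which puts one point into $\Omega_{x,\omega}$ but says nothing about $\mu$ of that sector. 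The ``whole sub-arc of $\Omega_{x,\omega}$ around it'' claim does not rescue this — it would require showing that all nearby $\omega''$ also lie in $D_{n_2^+}$, which fails because the high-density threshold $N^{1/100}$ in the definition of $D_n$ is exact, not up-to-constants, so a small rotation of a barely-high-density strip can drop below threshold. Your ``real obstacle'' paragraph has the difficulty reversed (a concentrated strip that is \emph{not} detected as high-density is precisely the low-density case and is already handled by the density bound), and appealing to the unrectifiability hypothesis \eqref{re} is structurally wrong: Lemma \ref{om} sits inside the proof of Proposition \ref{tan}, which is assumed and proved independently of \eqref{re}.

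The paper's key device, which you miss, is to cover $\Omega_{x,\omega}$ itself (not $\Phi(A)$) by finitely many arcs $I_1,\dots,I_K\subset I_*$ with $\sigma(\bigcup I_k)\lesssim\sigma(\Omega_{x,\omega})$, and then \emph{enlarge each arc slightly so that it contains a point $\omega_k\notin\Omega_{x,\omega}$}, concatenating to make them disjoint. Because $\omega_k\notin\Omega_{x,\omega}$ we have $(x,\omega_k)\notin\Delta\supseteq\Delta D$, and because $(x,\omega)\notin\tilde D$ and $\omega_k\in I_*$ we have $(x,\omega_k)\notin D'$; hence $(x,\omega_k)\notin D_{n_2+N^{-10/100}N}$. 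This gives a low-density strip in direction $\omega_k$ of width $\sim r\sigma(I_k)$ which \emph{contains the whole $I_k$-sector}, so $\mu\lesssim N^{1/100}r\sigma(I_k)$ for every $k$, with no uncontrolled ``high-density'' case at all. Your first half then forces all the mass of the annular sector into the directions $\bigcup I_k$, and summing gives $N^{-1/100}r\,r_{n_2-200,-}\lesssim\mu(A)\lesssim N^{1/100}r\,\sigma(\Omega_{x,\omega})$ without the additive $r_{n_2^+,-}$ term you were trying to absorb.
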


\begin{proof} From Definition \ref{norm-def} we have
\begin{equation}\label{mrr}
 \mu( X_{x,\omega}( r, 1/r_{n_2-200,-} ) \backslash X_{x,\omega}(r_{n_2+100,-}, 1/r_{n_2-200,-}) ) \gtrsim N^{-1/100} r r_{n_2-200,-} 
 \end{equation}
for some $r_{n_2+100,-} \leq r \leq r_{n_2-100,+}$.  

Fix this $r$. Since $\Delta$ is angularly closed, $\Omega_{x,\omega}$ is a compact subset of the circle $S^1$.  Thus we can find a finite number of open arcs $I_1,\ldots,I_K$ in the arc $I_* := \{ \omega' \in S^1: \angle(\omega,\omega') \leq 10^5 r_{n_2-200,-} \}$ which cover $\Omega_{x,\omega}$ and such that
\begin{equation}\label{soso}
 \sigma( \bigcup_{k=1}^K I_k ) \lesssim \sigma( \Omega_{x,\omega} ).
 \end{equation}
By enlarging these arcs slightly we can assume that each arc contains at least one point in the complement of $\Omega_{x,\omega}$; by concatenating overlapping arcs we may assume that these arcs are disjoint.  

Since $(x,\omega)$ lies in $F$, it lies outside of $\tilde H$, and thus by \eqref{hpdef}, \eqref{thdef} we have $(x,\omega') \not \in H'$ for any $\omega' \in I_*$.  A similar argument gives $(x,\omega') \not \in D'$ for any $\omega' \in I_*$.

\begin{figure}[tb]
\centerline{\psfig{figure=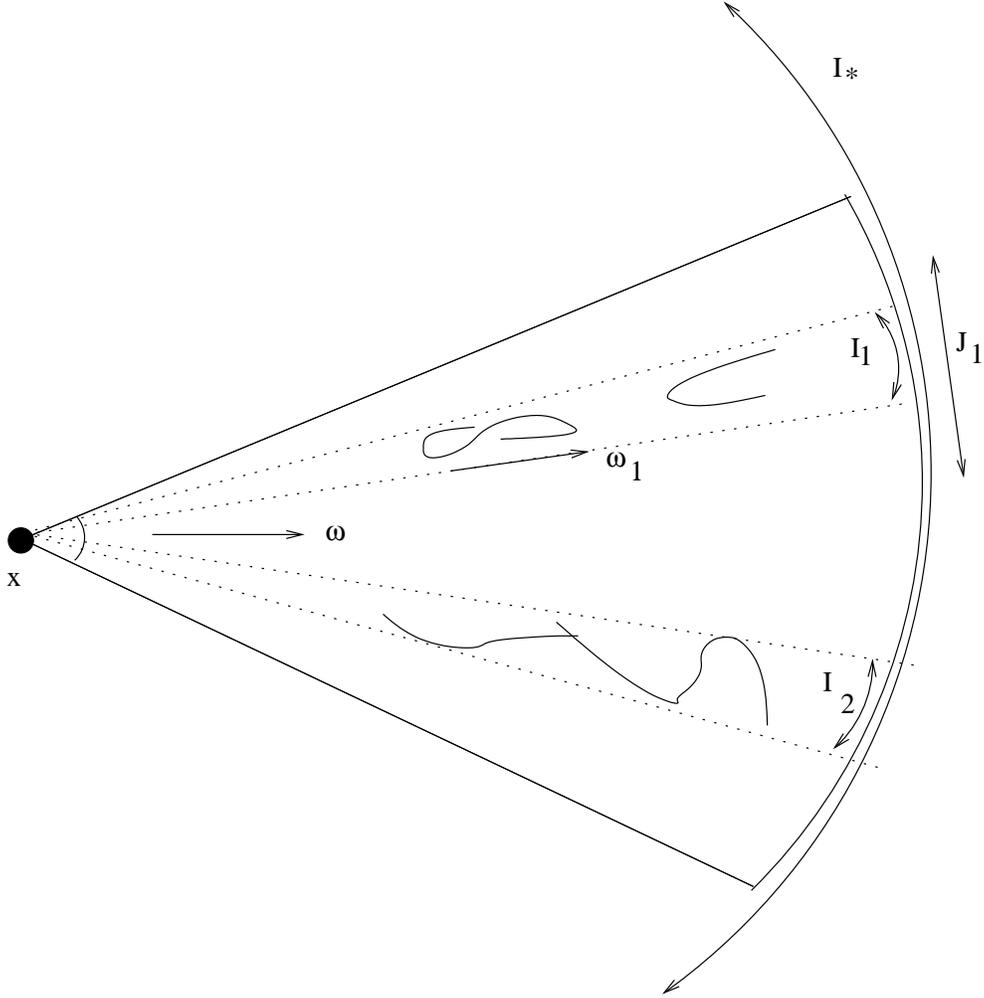}}
\caption{A typical situation in the proof of Lemma \ref{om}.}
\label{fig6}
\end{figure}

Suppose that $\omega' \in I_*$ does not lie in any of the $I_1,\ldots,I_k$, then by the above discussion $(x,\omega')$ does not lie in $\Delta$, $D'$, or $H'$.  In particular we see that $(x,\omega')$ does not lie in $P_{n_1}$.  This implies that the ray $\{ x + t \omega': r_{n_2+100,-} \leq |t| \leq r \}$ does not intersect $E$.  Since $\mu$ is supported on $E$, we conclude
$$
\mu( X_{x,\omega}( r, 1/r_{n_2-200,-} ) \backslash X_{x,\omega}(r_{n_2+100,-}, 1/r_{n_2-200,-}) )
\leq \sum_{k=1}^K \mu( \{ x + t \omega': r_{n_2+100,-} \leq |t| \leq r; \omega' \in I_k \} )$$
and thus by \eqref{mrr}
\begin{equation}\label{mrr2}
 \sum_{k=1}^K \mu( \{ x + t \omega': r_{n_2+100,-} \leq |t| \leq r; \omega' \in I_k \} ) \gtrsim
N^{-1/100} r r_{n_2-200,-}.
\end{equation}
On the other hand, given any $1 \leq k \leq K$, we have by construction that there exists at least one $\omega_k \in I_k$ which does not lie in $\Omega_{x,\omega}$.  In particular this shows that $(x,\omega_k)$ does not lie in either $D'$ or $\Delta D$, and thus does not lie in $D_{n_2 + N^{-10/100} N}$. In particular, this shows that
$$ \mu( \{ y \in E: y \cdot \omega_k \in J \} ) \leq N^{1/100} m(J)$$
whenever $J$ is an interval containing $x \cdot \omega'$ of length $m(J) \geq r_{n_2+N^{-10/100},-}$.  Now, from elementary geometry we see that the double sector $\{ x + t \omega': r_{n_2+100,-} \leq |t| \leq r; \omega' \in I_k \}$ can be contained in such a strip $\{ y \in E: y \cdot \omega_k \in J \}$ with $m(J) \sim r \sigma(I_k)$.  Thus
$$ m(\{ x + t \omega': r_{n_2+100,-} \leq |t| \leq r; \omega' \in I_k \}) \lesssim N^{1/100} r \sigma(I_k)$$
for all $1 \leq k \leq K$.  Comparing this with \eqref{mrr2}, \eqref{soso} we obtain the claim.
\end{proof}

From the above lemma, we see that if $(x,\omega)$ lies in $F$, then $\omega$ lies in the set where the Hardy-Littlewood maximal function of the indicator function of $\{ \omega' \in S^1: (x,\omega') \in \Delta \}$ on the circle is $\gtrsim N^{-2/100}$.  Applying the Hardy-Littlewood maximal inequality, we conclude that
$$ \sigma( \{ \omega: (x,\omega) \in F \} ) \lesssim N^{2/100} \sigma( \omega' \in S^1: (x,\omega') \in \Delta \}$$
for all $x \in E$.  Integrating this in $E$ we obtain
$$ \mu \times \sigma(F) \lesssim N^{2/100} \mu \times \sigma(\Delta)$$
and the claim \eqref{favor} then follows from Lemma \ref{aproj} and \eqref{delta-small}.  This concludes the proof of Theorem \ref{main}.

\appendix

\section{$\beta$-numbers and quantitative unrectifiability}

The purpose of this appendix is to use the theory of $\beta$-numbers to improve the bounds in Proposition \ref{rector}.

Given any dyadic square $Q$ and compact set $K \subset \R^2$, let $\beta_K(Q)$ denote the quantity
$$ \beta_K(Q) := \frac{2}{\diam(Q)} \inf_l \sup_{x \in K \cap Q} \dist(x, l)$$
where $l$ ranges over all lines.  Thus for instance $\beta_K(Q) = 0$ when $Q \cap K = \emptyset$, and also $\beta_K(Q) \leq \beta_{K'}(Q)$ whenever $K \subset K'$.  In \cite{jones} it was shown that
$$ \sum_Q \beta_K(3Q)^2 l(Q) \lesssim \S^1(K)$$
whenever $K$ is compact and connected, $Q$ ranges over all dyadic squares, $l(Q)$ is the length of $Q$, and $3Q$ is the square with the same centre as $Q$ but three times the sidelength.  In particular, for a Lipschitz graph $\Gamma := \{ x \omega_1 + F(x) \omega_2: -10 \leq x \leq 10 \}$ with $\|F\|_{\Lip} \leq M$ we see that
\begin{equation}\label{sumq}
 \sum_Q \beta_\Gamma(3Q)^2 l(Q) \lesssim 1+M
\end{equation}

\begin{proposition}[Rectifiability bound for product Cantor set, II]\label{rector2}  If $n \geq m > l \geq 0$ then
$$ R_{K_n \times K_n}( 2^{-m}, 2^{-l}, M ) \lesssim (1+M) / (m-l).$$
\end{proposition}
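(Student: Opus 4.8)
The plan is to run a multiscale \emph{branching} argument: if a Lipschitz graph stays within $2^{-m}$ of $K_n \times K_n$ over a substantial part of its domain, the self-similarity of the Cantor set will force the graph to be ``non-flat'' --- to have a Jones $\beta$-number bounded away from zero --- at a positive proportion of the scales between $2^{-l}$ and $2^{-m}$, and summing this information against the estimate \eqref{sumq} produces the claimed bound. First I would reduce, by subdividing $J$ into intervals of length $2^{-l}$ and using $K_n \times K_n \subseteq [0,1]^2$, to the case where $J$ has length $2^{-l}$ and is contained in a fixed bounded interval; I would also assume $m - l$ is large, since otherwise $(1+M)/(m-l) \gtrsim 1$ and the trivial bound $R \le 1$ suffices. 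Writing $\delta := R_{K_n \times K_n}(2^{-m}, 2^{-l}, M)$, there is then a Lipschitz graph $\Gamma$ parametrised by $\gamma(x) := x \omega_1 + F(x)\omega_2$ with $\|F\|_{\Lip} \le M$ (note $\gamma$ is injective, with $\gamma(x)\cdot\omega_1 = x$) for which $A := \{ x \in J : \gamma(x) \in \N_{2^{-m}}(K_n \times K_n) \}$ has $m(A) \gtrsim \delta\,|J|$. Dilating $J$ to unit length and applying \eqref{sumq} (using that $\beta$ is invariant under similarities and monotone in the underlying set), I get $\sum_Q \beta_\Gamma(3Q)^2 l(Q) \lesssim (1+M)\,|J|$, the sum over all dyadic squares $Q$.

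Next I would set up the scale bookkeeping. For $0 \le k \le n$, the set $K_n \times K_n$ is covered by $4^k$ \emph{Cantor squares} of sidelength $4^{-k}$; these are pairwise $\gtrsim 4^{-k}$-separated, and each of them (for $k < n$) contains exactly the four next-generation Cantor squares, sitting at the corners of its $4 \times 4$ subdivision. Since $\gamma(x)\cdot\omega_1 = x$, a Cantor square of sidelength $4^{-k}$ can contain $\gamma(x)$ only for $x$ in an interval of length $\le \sqrt2\,4^{-k}$. I call a level-$k$ Cantor square $\hat Q$ \emph{active} if $\gamma(A)$ meets $\N_{2^{-m}}(\hat Q)$, set $A_{\hat Q} := \{ x \in A : \gamma(x) \in \N_{2^{-m}}(\hat Q) \}$, and let $a_k$ be the number of active level-$k$ squares. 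Restricting to the scales $k_0 := \lceil l/2 \rceil \le k \le k_1 := \lfloor (m-C_0)/2 \rfloor$, for a suitable absolute constant $C_0$ (so that $2^{-m}$ is far smaller than the $\gtrsim 4^{-k}$-separation of the Cantor squares; this range has $\gtrsim m-l$ elements), the sets $A_{\hat Q}$ form a partition of $A$, so $\delta\,|J| \lesssim m(A) = \sum_{\hat Q} m(A_{\hat Q}) \le a_k\sqrt2\,4^{-k}$, i.e. $a_k \gtrsim \delta\,|J|\,4^k$; conversely, $\gamma(A)$ contains a $\gtrsim 4^{-k}$-separated set of $a_k$ points (one near each active level-$k$ square), which --- since $\gamma$ has Lipschitz constant $\le 1+M$ --- pulls back to a $\gtrsim 4^{-k}/(1+M)$-separated subset of $J$, so that $a_k \lesssim (1+M)\,|J|\,4^k$. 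Finally I call an active level-$k$ square \emph{branching} if at least three of its four children are active, and let $b_k$ be the number of these; a non-branching active square has at most two active children, so
\[ a_{k+1} \le 4 b_k + 2(a_k - b_k) = 2a_k + 2b_k. \]

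The geometric heart of the argument is the claim that \emph{a branching square $\hat Q$ at level $k$ satisfies $\beta_\Gamma(3\hat Q) \gtrsim 1$}. The four corner sub-squares of $\hat Q$ are essentially the four vertices of a square of side $\sim 4^{-k}$, and an elementary computation (any three of them form a non-degenerate triangle of ``width'' $\gtrsim 4^{-k}$) shows that for every line $l$, at least one of any three of these vertices lies at distance $\gtrsim 4^{-k}$ from $l$. Since $\Gamma$ comes within $2^{-m} \ll 4^{-k}$ of $K_n \times K_n$ inside each of three active corner sub-squares, $\Gamma \cap 3\hat Q$ cannot lie in any $o(4^{-k})$-neighbourhood of a line, which forces $\beta_\Gamma(3\hat Q) \gtrsim 1$. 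Choosing for each branching $\hat Q$ a dyadic square $Q$ with $3\hat Q \subseteq 3Q$ and $l(Q) \sim 4^{-k}$ (so $\beta_\Gamma(3\hat Q) \lesssim \beta_\Gamma(3Q)$), and noting that distinct branching squares of a fixed generation give $O(1)$-overlapping dyadic squares, the $\beta$-sum estimate above yields $\sum_k b_k\,4^{-k} \lesssim (1+M)\,|J|$.

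It remains to combine these facts. Writing $c_k := a_k\,4^{-k}$, the recursion $a_{k+1} \le 2a_k + 2b_k$ becomes $c_{k+1} \le \tfrac12 c_k + \tfrac12 b_k 4^{-k}$, hence $\tfrac12 c_k \le (c_k - c_{k+1}) + \tfrac12 b_k 4^{-k}$; summing over $k_0 \le k < k_1$ and telescoping gives
\[ \tfrac12 \sum_{k=k_0}^{k_1-1} c_k \le c_{k_0} + \tfrac12 \sum_k b_k 4^{-k} \lesssim (1+M)\,|J|, \]
using $\sum_k b_k 4^{-k} \lesssim (1+M)|J|$ and $c_{k_0} = a_{k_0}4^{-k_0} \lesssim (1+M)|J|$. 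On the other hand $c_k = a_k 4^{-k} \gtrsim \delta\,|J|$ for each of the $\gtrsim m-l$ indices $k$ in the range, so $\sum_k c_k \gtrsim (m-l)\,\delta\,|J|$. Comparing, $(m-l)\,\delta \lesssim 1+M$, which is the assertion. I expect the main obstacle to be the geometric claim above: one must take care with the $2^{-m}$ fuzz (this is why the scale range is truncated at $k \approx m/2$) and with the passage between Cantor squares and dyadic squares. Once that claim --- that $\beta_\Gamma \gtrsim 1$ on the branching part of the Cantor tree --- is established, the telescoping sum produces the sharp linear dependence on $1/(m-l)$ essentially automatically; note also that it produces the factor $1+M$ rather than $(1+M)^2$, because the identity $\gamma(x)\cdot\omega_1 = x$ supplies the $M$-free bound $m(A_{\hat Q}) \le \sqrt2\,4^{-k}$.
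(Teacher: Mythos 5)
Your proof is correct and uses essentially the same strategy as the paper: reduce via the Jones $\beta$-sum bound \eqref{sumq}, observe that the Cantor branching structure forces $\beta_\Gamma(3\hat Q)\gtrsim 1$ at a positive proportion of scales, and telescope. The only differences are minor: you look one level deep (three of four active children $\Rightarrow \beta\gtrsim 1$, giving the recursion $a_{k+1}\le 2a_k+2b_k$) whereas the paper looks ten levels deep (all $4^{10}$ descendants active $\Rightarrow \beta_E(3Q_j)\ge 0.01$) and telescopes with Lebesgue measure rather than square counts, and you handle the $2^{-m}$ fuzz in the definition of $R_E$ explicitly via $\N_{2^{-m}}(K_n\times K_n)$ and the truncation $k\le (m-C_0)/2$, whereas the paper works with the exact intersection $\Gamma\cap(K_m\times K_m)$ and tacitly suppresses this point --- your treatment of it is somewhat more careful.
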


\begin{proof} Once again we can rescale $l=0$.  By enlarging $K_n \times K_n$ to $K_m \times K_m$ we can also assume $n=m$.  Let $\Gamma := \{ x \omega_1 + F(x) \omega_2: -10 \leq x \leq 10 \}$ be a Lipschitz graph with $\|F\|_{\Lip} \leq M$, and let $E := (K_n \times K_n) \cap \Gamma$, thus
$$ E = \{ x \omega_1 + F(x) \omega_2: x \in A \}$$
for some closed set $A \subset \R$.  It then suffices to show that $m(A) \lesssim (1+M) / n$.  We can assume that $m$ is large, since the claim is trivial otherwise.  In view of \eqref{sumq}, it thus suffices to show that
$$ 1 + \sum_Q \beta_E(3Q)^2 l(Q) \gtrsim n m(A).$$

Let $1 \leq j \leq n$.  Then $K_j \times K_j$ consists of $4^j$ dyadic squares of sidelength $4^{-j}$; let $E_j$ be the union of all such squares which intersect $E$, thus
$$ E \subset E_{n} \subset \ldots \subset E_{1}.$$ 
Let $1 \leq j \leq n-10$, let $Q_j$ be one of the squares in $E_j$, and let $Q_{j,1},\ldots,Q_{j,4^{10}}$ be the $4^{10}$ dyadic squares in $K_{j+10} \times K_{j+10}$ of sidelength $4^{-j-10}$ which are contained in $Q_j$.  From elementary geometry we see that if all $4^{10}$ of these squares lie in $E_{j+10}$, then $\beta_E(3Q_j) \geq 0.01$.  Viewing this contrapositively, we see that if $\beta_E(3Q_j) < 0.01$ then $E_j \backslash E_{j+10}$ contains at least one square of sidelength $4^{-j-10}$ contained in $Q_j$.  Summing this fact over all $Q_j$ in $E_j$, we see that
$$ \sum_{Q_j \subset E_j: l(Q_j) = 4^{-j}} l(Q_j)
\lesssim \sum_{Q_j \subset E_j: l(Q_j) = 4^{-j}} \beta_E(3Q_j)^2 l(Q_j)
+ \sum_{Q_{j+10} \subset E_j \backslash E_{j+10}: l(Q_{j+10}) = 4^{-j-10}} 4^{-j}.$$
A simple counting argument shows that
$$  \sum_{Q_j \subset E_j: l(Q_j) = 4^{-j}} l(Q_j) \gtrsim 4^n m_2(E_n)$$
where $m_2$ denotes two-dimensional Lebesgue measure.  Similarly
$$ \sum_{Q_{j+10} \subset E_j \backslash E_{j+10}: l(Q_{j+10}) = 4^{-j-10}} 4^{-j}
\lesssim 4^n m_2( (E_j \backslash E_{j+10}) \cap (K_n \times K_n ) ),$$
and thus
$$ 4^n m_2(E_n) \lesssim \sum_{Q_j \subset E_j: l(Q_j) = 4^{-j}} \beta_E(3Q_j)^2 l(Q_j) +
4^n m_2( (E_j \backslash E_{j+10}) \cap (K_n \times K_n ) ).$$
Summing in $j$ and using telescoping series, we obtain
$$ n 4^n m_2(E_n) \lesssim \sum_{Q} \beta_E(3Q)^2 l(Q) +
4^n m_2( K_n \times K_n ).$$
We can directly compute that $4^m m_2( K_m \times K_m ) = 1$.  Also, $E_m$ consists of $4^n m_2(E_n)$ squares of length $4^{-n}$, and the intersection of each such square with $E$ contributes a set of measure $O(4^{-n})$ to $A$. We thus have
$$ m(A) \lesssim 4^n m_2(E_n)$$
and the claim follows.
\end{proof}

The estimate in Proposition \ref{rector2} is significantly stronger than that in Proposition \ref{rector}, as the former becomes non-trivial as soon as $m-l \gg M$, whereas the latter is only non-trivial for $m-l \gg e^{C M^{100}}$.  
However, both estimates, when inserted into Theorem \ref{main}, give essentially the same result; replacing Proposition \ref{rector} by Proposition \ref{rector2} allows one to reduce the double-exponential in \eqref{mojo} to single-exponential, but after iteration this only affects the (unspecified) implied constant in the final bound for $\Fav(K_n \times K_n)$.

\end{document}